\newtheorem{lemm}{Lemma}[section]
\newtheorem{coro}{Corollary}[section]
\newtheorem{prop}{Proposition}[section]
\newtheorem{theo}{Theorem}[section]
\theoremstyle{definition}
\newtheorem{defi}{Definition}[section]
\newtheorem{rema}{Remark}[section]
\numberwithin{equation}{section}
\def\Bbb{\mathbb}
\def\fb{\partial\{u>0\}}
\def\uep{u^\varepsilon}
\def\ep{\varepsilon}
\def\pep{P_\varepsilon}
\def\a{\alpha}
\def\fint{\operatorname {--\!\!\!\!\!\int\!\!\!\!\!--}}
\def\uepj{u^{\varepsilon_j}}
\def\pepj{P_{\varepsilon_j}}
\def\rn1{\Bbb R^{N+1}}
\def\rn{\Bbb R^{N}}
\def\epjn{\varepsilon_{j_n}}
\def\uepjnln{(u^{\varepsilon_{j_n}})_{\lambda_n}}
\def\Ln{\lambda_n}
\def\epj{\varepsilon_j}
\def\fepj{f^{{\varepsilon}_j}}
\def\fep{f^{{\varepsilon}}}
\def\vep{v^\ep}
\def\a*{\alpha^*_\ep}
\def\bepj{B_{\varepsilon_j}}
\def\R{\mathbb R}
\def\lstar{\lambda^*}
\def\lone{\lambda_{\min}}
\def\ltwo{\lambda_{\max}}
\begin{document}
\title[An inhomogeneous singular perturbation problem for the $p(x)$-Laplacian]{An inhomogeneous singular perturbation problem \\ for the $p(x)$-Laplacian}
\author[Claudia Lederman]{Claudia Lederman}
\author[Noemi Wolanski]{Noemi  Wolanski}
\address{IMAS - CONICET and Departamento  de
Ma\-te\-m\'a\-ti\-ca, Facultad de Ciencias Exactas y Naturales,
Universidad de Buenos Aires, (1428) Buenos Aires, Argentina.}
\email[Claudia Lederman]{clederma@dm.uba.ar} \email[Noemi
Wolanski]{wolanski@dm.uba.ar}
\thanks{Supported by the Argentine Council of Research CONICET under the project PIP625, Res. 960/12,  UBACYT 20020100100496 and
ANPCyT PICT 2012-0153.}

 \dedicatory{Dedicated to our dear friend and colleague Juan Luis V\'azquez on the
 occasion of his 70th birthday}

\keywords{Free boundary
problem, variable exponent spaces, singular perturbation.
\\
\indent 2010 {\it Mathematics Subject Classification.} 35R35,
35B65, 35J60, 35J70}\maketitle

\begin{abstract}
In this paper we study  the  following singular perturbation problem for the $p_\ep(x)$-Laplacian:
\begin{equation}
\label{eq}\tag{$P_\ep(\fep, p_\ep)$}
\Delta_{p_\ep(x)}\uep:=\mbox{div}(|\nabla \uep(x)|^{p_\ep(x)-2}\nabla
\uep)={\beta}_{\varepsilon}(\uep)+\fep, \quad u^{\ep}\geq 0,
\end{equation}
 where $\ep>0$,
${\beta}_{\varepsilon}(s)={1 \over \varepsilon} \beta({s \over
\varepsilon})$, with $\beta$  a  Lipschitz  function satisfying
$\beta>0$ in $(0,1)$, $\beta\equiv 0$ outside $(0,1)$ and $\int
\beta(s)\, ds=M$. The functions $\uep$, $\fep$ and $p_\ep$ are uniformly
bounded.
We prove uniform Lipschitz regularity, we pass to the limit $(\ep\to 0)$ and
we show that, under  suitable assumptions, limit functions are weak solutions to the free
boundary problem: $u\ge0$ and
\begin{equation}
\label{fbp-px}\tag{$P(f,p,{\lambda}^*)$}
\begin{cases}
\Delta_{p(x)}u= f & \mbox{in }\{u>0\}\\
u=0,\ |\nabla u| = \lambda^*(x) & \mbox{on }\partial\{u>0\}
\end{cases}
\end{equation}
with $\lambda^*(x)=\Big(\frac{p(x)}{p(x)-1}\,M\Big)^{1/p(x)}$,
$p=\lim p_\ep$ and $f=\lim \fep$.

In \cite{LW4} we prove that
 the free boundary of a weak solution
is a $C^{1,\alpha}$ surface near flat free boundary points. This result applies, in particular, to the limit functions studied in this paper.
\end{abstract}

\bigskip


\begin{section}{Introduction}
\label{sect-intro}

Singular perturbation problems of the form
\begin{equation}\label{ph}
L\uep = \beta_\ep(\uep)
\end{equation}
with ${\beta}_{\varepsilon}(s)={1 \over \varepsilon} \beta({s
\over \varepsilon})$, $\beta$ nonnegative, smooth and supported on
$[0,1]$ and $L$ an elliptic or parabolic second order differential operator
have been widely studied due to their appearance in different
contexts. One of its main application being to flame propagation.
See \cite{BCN,BL,CV,ZFK} and also the excellent survey by J. L. V\'azquez \cite{V}.

A natural generalization is the consideration of inhomogeneous problems
\begin{equation}\label{pih}
L\uep=\beta_\ep(\uep)+\fep
\end{equation}
with $\fep$ uniformly bounded independently of $\ep$.  The inhomogeneous terms may represent sources as well as nonlocal effects, when the family $\uep$ is uniformly bounded (see \cite{LW2}).

Problem \eqref{ph} was first studied for a linear uniformly elliptic operator $L$ by Berestycki, Caffarelli and Nirenberg in \cite{BCN} and then for the heat equation by Caffarelli and V\'azquez in \cite{CV}. The two phase  case for the heat equation was studied by Caffarelli and the authors in \cite{CLW1,CLW2}.
A natural question is the identification of the limiting problem as $\ep\to0$. To this end, estimates uniform in $\ep$ are needed. These two questions were the object of the above mentioned articles \cite{BCN, CV, CLW1, CLW2}.

For the inhomogeneous problem \eqref{pih} and $L=\Delta$  or $L=\Delta-\partial_t$ these questions were settled in \cite{LW2,LW3}.

The homogeneous problem  \eqref{ph} in the case of the $p$-Laplacian was considered in \cite{DPS} and then, for more general operators with power like growth  in \cite{MW2}.
Uniform estimates for the inhomogeneous problem \eqref{pih} and the $p$-Laplacian were obtained in \cite{MoWa1}. Additional results
for these type of problems were obtained in \cite{AW,LO,LW1,MoWa1,RT,W}.

\bigskip

In this paper we study the case where the operator $L$ is the $p_\ep(x)$-Laplacian, defined as
\begin{equation*}
\Delta_{p_\ep(x)}u:=\mbox{div}(|\nabla u(x)|^{p_\ep(x)-2}\nabla u),
\end{equation*}
that extends the Laplacian, where $p_\ep(x)\equiv 2$, and the
$p$-Laplacian,
 where $p_\ep(x)\equiv p$ with $1<p<\infty$. The $p(x)$-Laplacian has been
 used in the modeling of electrorheological fluids (\cite{R}) and
 in image processing (\cite{AMS}, \cite{CLR}).

 We consider the inhomogenous problem \eqref{pih}  but we remark that this singular perturbation problem for the  $p_\ep(x)$-Laplacian  had not been studied even in the homogeneous case \eqref{ph}. Moreover, the identification of the limiting problem in the inhomogeneous case had not been done even for $p_\ep(x)\equiv p$.

 As stated above, this singular perturbation problem may model flame propagation in a fluid with electromagnetic sensitivity. Hence its interest
 from a modeling point of view. On the other hand, the presence of a variable exponent $p_\ep(x)$ and a right hand side $f_\ep(x)$ brings
 new mathematical difficulties, that can be found scattered all along this paper, that were not present in the constant case $p_\ep(x)\equiv p$. An
 important tool we use is the Harnack Inequality for the inhomogeneous $p(x)$-Laplacian that we recently proved in \cite{Wo}.

\medskip

More precisely, in this paper we study the following singular perturbation
 problem for the $p_\ep(x)$-Laplacian:
\begin{equation}
\tag{$P_\ep(\fep, p_\ep)$}
\Delta_{p_\ep(x)}\uep={\beta}_{\varepsilon}(\uep)+\fep, \quad
u^{\ep}\geq 0
\end{equation}
in a domain $\Omega\subset \Bbb R^{N}$. Here $\ep>0$,
${\beta}_{\varepsilon}(s)={1 \over \varepsilon} \beta({s \over
\varepsilon})$, with $\beta$  a  Lipschitz function satisfying
$\beta>0$ in $(0,1)$, $\beta\equiv 0$ outside $(0,1)$ and $\int
\beta(s)\, ds=M$.

We assume that $\uep$, $\fep$  are
uniformly bounded and that $p_\ep$ are uniformly bounded in Lipschitz norm. We prove uniform Lipschitz regularity, we pass
to the limit $(\ep\to 0)$ and we show that, under suitable
assumptions, limit functions are weak solutions to the following
free boundary problem: $u\ge 0$ and
\begin{equation}
\label{bernoulli-px}\tag{$P(f,p,{\lambda}^*)$}
\begin{cases}
\Delta_{p(x)}u= f & \mbox{in }\{u>0\}\\
u=0,\ |\nabla u| = \lambda^*(x) & \mbox{on }\partial\{u>0\}
\end{cases}
\end{equation}
with $\lambda^*(x)=\Big(\frac{p(x)}{p(x)-1}\,M\Big)^{1/p(x)}$, $p=\lim p_\ep$ and
$f=\lim \fep$.

We remark that, in the inhomogeneous case, there are examples of limit functions that are not solutions to the free boundary problem \ref{bernoulli-px}.
These examples were produced with $p_\ep(x)\equiv2$ in \cite{LW2}. Hence, some extra assumptions on the limit functions are needed.

\medskip

In a companion paper \cite{LW4} we study the regularity of the free boundary for weak solutions of {$P(f,p,{\lambda}^*)$} with $p(x)$
Lipschitz and $\lambda^*(x)$ a H\"older continuous function. In \cite{LW4} we show that the free boundary is a $C^{1,\alpha}$ surface near flat free boundary points. This regularity result applies in particular to limits of this singular perturbation problem, under the above mentioned assumptions. These additional assumptions are verified if, for instance, the functions $\uep$ are local minimizers of an energy functional. We prove this last result  in \cite{LW5}. Moreover, in this special case, we show in \cite{LW5} that the set of singular points has zero ${\mathcal H}^{N-1}$ measure.

 In conclusion, in this   first paper  of a series  on the singular perturbation problem $P_\ep(\fep, p_\ep)$ we study the fundamental uniform properties of the solutions and we determine the limiting free boundary problem.

\bigskip

An outline of the paper is as follows: In Section 2 we obtain
uniform bounds of the gradients of solutions to the singular perturbation
problem $P_\ep(\fep, p_\ep)$ (Theorem \ref{estim-lip}). In Section 3 we pass to the limit, in Section 4 we analyze some basic limits and in Section 5 we study the asymptotic
behavior of limit functions. Finally, in Section 6 we
define the notion of weak solution to the free boundary problem \ref{bernoulli-px} and
we show that, under suitable assumptions, limit functions to the singular perturbation $P_\ep(\fep, p_\ep)$ are
weak solutions to the free boundary problem \ref{bernoulli-px} with $\lambda^*(x)=\Big(\frac{p(x)}{p(x)-1}\,M\Big)^{1/p(x)}$ (Theorem \ref{lim=weak}). We also state the result from \cite{LW4} on the regularity of the interface for weak solutions (Theorem \ref{reg-weak}).
 We finish the paper with an appendix where we
 collect some results on variable exponent Sobolev spaces  as well as some other results that are used in the paper.

\begin{subsection}{Assumptions}

\medskip

Throughout the paper we let $\Omega\subset\R^N$ a domain.

\bigskip

\noindent{\bf Assumptions on $p_\ep(x)$ and $p(x)$.} We will
assume that the functions $p_\ep(x)$ verify
\begin{equation}\label{pminmax}
1<p_{\min}\le p_\ep(x)\le p_{\max}<\infty,\qquad x\in\Omega.
\end{equation}
When we are restricted to a ball $B_r$ we use ${p_\ep^r}_-$ and
${p_\ep^r}_+ $ to denote the infimum and the supremum of
$p_\ep(x)$ over $B_r$.

We also assume that $p_\ep(x)$ are continuous up to the boundary and that they have  a uniform modulus of continuity $\omega:\R \to \R$, i.e.
$|p_\ep(x)-p_\ep(y)|\leq \omega(|x-y|)$ if $|x-y|$ is small.

For our main results we need  to assume further that $p_\ep(x)$ are uniformly
Lipschitz continuous in $\Omega$. In that case, we denote by $L$
the Lipschitz constant of $p_\ep(x)$, namely, $\|\nabla
p_\ep\|_{L^{\infty}(\Omega)}\leq L$.

The same assumptions above will be made on the function $p(x)$.

\bigskip

\noindent{\bf Assumptions on $\beta_\ep$.} We will assume that the
functions $\beta_\ep$ are defined by scaling of a single function
$\beta:\Bbb R\to \Bbb R$ satisfying:
\begin{itemize}
\item[i)] $\beta$ is a Lipschitz continuous function, \item [ii)]
$\beta>0$ in $(0,1)$ and $\beta\equiv 0$ otherwise, \item [iii)]
$\int_0^1\beta(s)\,ds=M$.
\end{itemize}
And then  $ \beta_\ep(s):=\frac{1}{\ep}\beta(\frac{s}{\ep}). $
\end{subsection}
\bigskip

\begin{subsection}{Definition of solution to $p(x)$-Laplacian.}

Let $p(x)$ be as above and let $g\in L^{\infty}(\Omega\times\R )$.
We say that $u$ is a solution to
$$
\Delta_{p(x)}u = g(x,u) \ \mbox{ in } \ \Omega
$$
if
$u\in W^{1,p(\cdot)}(\Omega)$ and,  for every  $\varphi \in W_0^{1,p(\cdot)}(\Omega)$, there holds
that
$$
\int_{\Omega} |\nabla u(x)|^{p(x)-2}\nabla u \cdot \nabla
\varphi\, dx =-\int_{\Omega} \varphi\,
g(x,u)\, dx.
$$
By the results in \cite{Wo}, it follows that $u\in L_{\rm loc}^{\infty}(\Omega)$.

\end{subsection}

\bigskip

\begin{subsection}{Notation}\ \ \newline

$\bullet$ $N$ \quad spatial dimension

$\bullet$   $\Omega\cap\partial\{ u>0 \}$ \quad free
boundary

$\bullet$  $|S|$ \quad  $N$-dimensional Lebesgue measure of the
set $S$

$\bullet$ ${\mathcal H}^{N-1}$ \quad  $(N-1)$-dimensional
Hausdorff measure

$\bullet$  $B_r(x_0)$ \quad  open ball of radius $r$ and center
$x_0$

$\bullet$  $B_r$ \quad  open ball of radius $r$ and center
$0$

$\bullet$  $B'_r(x_0)$ \quad  open ball of radius $r$ and center
$x_0$ in $\R^{N-1}$

$\bullet$  $B'_r$ \quad  open ball of radius $r$ and center
$0$ in $\R^{N-1}$

$\bullet$  $\fint_{B_r(x_0)}u= {1\over {|B_r(x_0)|}}
\int_{B_r(x_0)}u\,dx$

$\bullet$  $\fint_{\partial B_r(x_0)}u= {1\over {{\mathcal
H}^{N-1} (\partial B_r(x_0))}} \int_{\partial
B_r(x_0)}u\,d{\mathcal H}^{N-1}$

$\bullet$ $\chi_{{}_S}$ \quad  characteristic function of the set
$S$

$\bullet$ $u^{+}=\text{\rm max}(u,0)$,\quad $u^{-}=\text{\rm
max}(-u,0)$

$\bullet$ $\langle\,\cdot\, ,\,\cdot\,\rangle$\quad scalar product
in $\Bbb R^{N}$

$\bullet$ $B_\ep(s)=\int _0^s\beta_\ep(\tau) \, d\tau$

\end{subsection}

\end{section}

\begin{section}{Uniform bound of the gradient}
\label{sect-sing-pert}

In this section we consider a family of uniformly bounded solutions to the singular perturbation
problem $P_\ep(\fep, p_\ep)$ and prove that their gradients are locally uniformly bounded. Our
main result in the section is the following theorem
\begin{theo}\label{estim-lip} Assume that $1<p_{\min}\le p_\ep(x)\le p_{\max}<\infty$
with  $p_\ep(x)$ Lipschitz continuous and $\|\nabla p_\ep\|_{L^{\infty}}\leq L$, for some $L>0$. Let $u^{\ep}$ be a
solution of
\begin{equation}
\tag{$P_\ep(\fep, p_\ep)$}
\Delta_{p_\ep(x)}u^{\ep}=\beta_{\ep}(u^{\ep})+\fep,  \quad
u^{\ep}\geq 0 \quad \mbox{ in } \Omega,\end{equation} with
$\|u^{\ep}\|_{L^{\infty}(\Omega)}\leq L_1$,
$\|\fep\|_{L^{\infty}(\Omega)}\leq L_2$.
  Then, for $\Omega'\subset\subset \Omega$, we have
$$|\nabla u^{\ep}(x)|\leq C\quad \mbox{ in } \Omega',$$
with $C=C(N,L_1,L_2,\|\beta\|_{L^{\infty}}, p_{\min},p_{\max}, L,
\mbox{dist}(\Omega',\partial\Omega))$, if $\ep\leq
\ep_0(\Omega,\Omega')$.
\end{theo}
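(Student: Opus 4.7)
The plan is to argue by contradiction via a blow-up/rescaling argument, adapting the scheme used for the constant-exponent $p$-Laplacian by Danielli--Petrosyan--Shahgholian and Moreira--Wang to the present variable-exponent inhomogeneous setting.

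Suppose the conclusion fails. Then there exist $\ep_k$ and $y_k \in \Omega'$ with $|\nabla u^{\ep_k}(y_k)| \to \infty$. A standard doubling argument applied to $x \mapsto |\nabla u^{\ep_k}(x)|(d - |x-y_k|)$ on a ball $B_d(y_k) \subset \Omega$, with $d$ a fixed fraction of $\mathrm{dist}(\Omega',\partial\Omega)$, yields points $x_k$ and radii $\rho_k$ such that $M_k := |\nabla u^{\ep_k}(x_k)| \to \infty$, $R_k := \rho_k M_k \to \infty$, and $\sup_{B_{\rho_k}(x_k)}|\nabla u^{\ep_k}| \leq 2M_k$. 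Passing to subsequences, assume $q_k(0) := p_{\ep_k}(x_k) \to p_0 \in [p_{\min},p_{\max}]$. Define the blow-up
\[v_k(y) := u^{\ep_k}(x_k + y/M_k), \qquad y \in B_{R_k}.\]
Then $0 \leq v_k \leq L_1$, $|\nabla v_k(0)| = 1$, $|\nabla v_k| \leq 2$ on $B_{R_k}$, and a change of variables in the weak formulation shows that, with $q_k(y) := p_{\ep_k}(x_k + y/M_k)$,
\[\mathrm{div}_y\!\left[M_k^{q_k(y) - q_k(0)} |\nabla v_k|^{q_k(y)-2} \nabla v_k\right] = M_k^{-q_k(0)}\!\left[\beta_{\ep_k}(v_k) + f^{\ep_k}(x_k + y/M_k)\right]\]
weakly on $B_{R_k}$. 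The Lipschitz bound on $p_{\ep_k}$ gives $|q_k(y) - q_k(0)| \leq L|y|/M_k$, so the weight $M_k^{q_k(y)-q_k(0)} \to 1$ locally uniformly.

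The analysis now splits according to the scale-invariant quantity $\sigma_k := \ep_k M_k^{q_k(0)}$, which measures the thickness of the singular layer relative to the blow-up scale. If $\sigma_k \to \infty$, the right-hand side of the rescaled equation tends to zero; uniform interior $C^{1,\alpha}$ estimates for the $p(y)$-Laplacian with bounded right-hand side give a $C^1_{\mathrm{loc}}$-sublimit $v$ on $\R^N$ that is nonnegative, $p_0$-harmonic, with $|\nabla v(0)| = 1$ and $|\nabla v| \leq 2$; the Harnack inequality for nonnegative $p_0$-harmonic functions then forces $v$ to be constant, contradicting $|\nabla v(0)| = 1$. If instead $\sigma_k$ stays bounded, introduce the finer rescaling $w_k(y) := u^{\ep_k}(x_k + \ep_k y/M_k)/\ep_k$, still satisfying $|\nabla w_k(0)| = 1$ and $|\nabla w_k|\leq 2$, for which the rescaled equation reads
\[\mathrm{div}_y\!\left[M_k^{q_k(y)-q_k(0)} |\nabla w_k|^{q_k(y)-2}\nabla w_k\right] = M_k^{-q_k(0)}\!\left[\beta(w_k) + \ep_k f^{\ep_k}(\cdot)\right],\]
whose right-hand side is $O(M_k^{-q_k(0)}) \to 0$. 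Passing to the limit of $w_k - w_k(0)$ (locally bounded by the gradient estimate) yields an entire $p_0$-harmonic function with $|\nabla \cdot (0)| = 1$, and the same Harnack/Liouville argument (using $w_k \geq 0$ to reduce to the nonnegative case, either directly when $w_k(0)$ is bounded or via a further translation when $w_k(0) \to \infty$) produces a contradiction.

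The most delicate technical point is the passage to the limit in the nonlinear flux $M_k^{q_k(y)-q_k(0)}|\nabla v_k|^{q_k(y)-2}\nabla v_k$: the weight and the exponent both depend on $y$, so strong convergence of the rescaled gradients is required, which in turn rests on uniform $C^{1,\alpha}$ estimates for $p(y)$-Laplacian-type equations with bounded right-hand side and Lipschitz exponent. The Harnack inequality for the inhomogeneous $p(x)$-Laplacian from \cite{Wo} is used both to obtain this compactness and to control the positivity of the blow-up sequence in the critical intermediate regime where the singular source must be tracked.
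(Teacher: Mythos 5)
Your strategy (doubling lemma, blow-up at scale $1/M_k$, Liouville) is genuinely different from the paper's, which never argues by contradiction: the paper first bounds $|\nabla u^\ep|$ at points where $u^\ep\le 2\ep$ by rescaling by $\ep$ and combining the Harnack inequality of \cite{Wo} with Fan's $C^{1,\alpha}$ estimates, then proves the linear growth bound $u^\ep\le\ep+C\,\mathrm{dist}(x,\{u^\ep\le\ep\})$ via Harnack plus an explicit exponential barrier and the comparison principle, and finally rescales by $d=\mathrm{dist}(x,\{u^\ep\le\ep\})$ at the remaining points. Your Case 1 ($\sigma_k\to\infty$) and the sub-case of Case 2 where $w_k(0)$ stays bounded are sound (modulo the technical point that the uniform $C^{1,\alpha}$ estimate must be justified for the weighted operator $\mathrm{div}(M_k^{q_k(y)-q_k(0)}|\nabla\cdot|^{q_k(y)-2}\nabla\cdot)$, whose coefficient is uniformly close to $1$ with Lipschitz norm $O(\log M_k/M_k)$, so this is believable).

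The genuine gap is the sub-case $\sigma_k$ bounded and $w_k(0)=u^{\ep_k}(x_k)/\ep_k\to\infty$, i.e.\ the point of large gradient sits deep inside $\{u^{\ep_k}>\ep_k\}$ relative to the scale $\ep_k/M_k$. There the limit of $w_k-w_k(0)$ is an entire $p_0$-harmonic function with $|\nabla w|\le 2$ and $|\nabla w(0)|=1$ but \emph{no lower bound}, since the subtracted constants diverge; the plane $w(y)=y_1$ satisfies all these constraints, so neither Liouville nor Harnack yields a contradiction, and the ``further translation'' you invoke is not an argument — translating to a point where $w_k$ is small destroys the normalization $|\nabla w_k(0)|=1$. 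This regime is not vacuous: nothing in your setup excludes, say, $M_k\sim\ep_k^{-1/2}$ with $u^{\ep_k}(x_k)\sim\ep_k^{1/2}$, for which the distance from $x_k$ to $\{u^{\ep_k}\le\ep_k\}$ is of order $\ep_k$ and the naive interior estimate only gives $|\nabla u^{\ep_k}(x_k)|\lesssim L_1/\ep_k$. What is missing is precisely the content of the paper's Lemma \ref{uep=ep}: the bound $\sup_{B_d(x)}u^\ep\lesssim\ep+d$, obtained from the Harnack inequality together with the barrier of Lemma \ref{cota-barrera} and the gradient bound at points of $\partial\{u^\ep>\ep\}$, which makes the rescaled $C^{1,\alpha}$ estimate effective at the intermediate scale $d$. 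Without an ingredient of this type (or an equivalent monotonicity/growth lemma), the blow-up scheme cannot close in this intermediate regime.
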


An essential tool in the proof will be the following Harnack's
Inequality for the inhomogenous $p(x)$-Laplacian equation, proven
in \cite{Wo}, Theorem 2.1

\begin{theo}\label{har} Assume that
$p(x)$ is locally log-H\"older continuous in $\Omega$. This is, $p(x)$ has locally a modulus of continuity $\omega(r)=C(\log\frac1r)^{-1}$. Let $x_0\in\Omega$
and $0<R\le 1$ such that $\overline{B_{4R}(x_0)}\subset\Omega$. There exists C such that, if $u\in W^{1,p(\cdot)}(\Omega)\cap L^{\infty}(\Omega)$
is a nonnegative solution of the problem
\begin{equation}
\Delta_{p(x)} u=f \mbox{
in }\Omega,
\end{equation}
with $f\in L^{q_0}(\Omega)$ for some $\max\{1,
\frac{N}{p_-^{4R}}\}<q_0\le\infty$, then
$$\sup_{{B_R}(x_0)}u\leq C[\inf_{B_R(x_0)}u+R+ R\mu]$$
where
$$\mu=[R^{1-\frac{N}{q_0}}||f||_{L^{q_0}(B_{4R}(x_0))}]^{\frac{1}{{p_-^{4R}}-1}}.$$
The constant $C$ depends only on $N$, $p_-^{4R} := \inf_{B_{4R}(x_0)}p$,
$p_+^{4R} := \sup_{B_{4R}(x_0)}p$, $s$, $q_0$, $\omega_{4R}$, ${\mu}^{p_+^{4R}-p_-^{4R}}$,
$||u||_{L^{sq'}(B_{4R}(x_0))}^{p_+^{4R}-p_-^{4R}}$ and $||u||_{L^{sr_0}(B_{4R}(x_0))}^{p_+^{4R}-p_-^{4R}}$ (for certain $q'=\frac{q}{q-1}$ with
$r_0,q\in(1,\infty)$ and $\frac{1}{q_0}+\frac{1}{q}+\frac{1}{r_0}=1$ depending on $N,q_0$ and $p_-^{4R}$). Here
$s>p_+^{4R}-p_-^{4R}$ is arbitrary and $\omega_{4R}$ is the modulus of log-H\"older continuity  of $p(x)$
in $B_{4R}(x_0)$.

\end{theo}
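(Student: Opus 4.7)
My plan is to adapt the classical Moser iteration scheme (Serrin, Trudinger, DiBenedetto) to the variable-exponent setting, in the spirit of the works of Alkhutov, Harjulehto--Kinnunen--Lukkari and others on $p(x)$-Laplace equations. The proof naturally splits into three parts: a Caccioppoli-based sup estimate, a weak Harnack for the infimum, and a combination step through BMO.

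For the \emph{upper bound}, set $k := R + R\mu$ and test the equation against $\varphi = \eta^{p_+^{4R}}(u+k)^{\gamma}$, where $\eta$ is a smooth cut-off supported in $B_{4R}(x_0)$ and $\gamma \neq 0$. Young's inequality with variable exponent $p(x)$, combined with an absorption trick, produces a Caccioppoli-type estimate of the form
\begin{equation*}
\int \eta^{p_+^{4R}} (u+k)^{\gamma-1}|\nabla u|^{p(x)} \, dx \le C(\gamma,p_\pm) \int (u+k)^{\gamma+p(x)-1}|\nabla \eta|^{p(x)} \, dx + \text{source term}.
\end{equation*}
The shift $k$ is tuned so that the source term, controlled via Hölder's inequality with exponent $q_0 > N/p_-^{4R}$, can be absorbed into the left-hand side or into the shift itself. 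Combining this with the variable-exponent Sobolev--Poincaré inequality (valid precisely because $p$ is log-Hölder continuous), each iteration step boosts $u$ from $L^q$ to $L^{qp^*/p}$, and a geometric-series argument yields $\sup_{B_R} u \le C\bigl(\fint_{B_{2R}} u^s\,dx\bigr)^{1/s} + C(R+R\mu)$ for any $s>0$. The factors $\|u\|_{L^{sq'}}^{p_+^{4R}-p_-^{4R}}$ and $\|u\|_{L^{sr_0}}^{p_+^{4R}-p_-^{4R}}$ in the constant arise from the ``defect'' between variable and constant-exponent iteration, and log-Hölder continuity is precisely what keeps $R^{p_+^{4R}-p_-^{4R}}$ uniformly bounded so that these factors remain finite.

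For the \emph{lower bound}, I would test with $\varphi = \eta^{p_+^{4R}}(u+k)^{1-p(x)}$ to derive a BMO-type bound on $\log(u+k)$. John--Nirenberg then furnishes a small $s>0$ such that $\bigl(\fint_{B_{2R}}(u+k)^{s}\bigr)\bigl(\fint_{B_{2R}}(u+k)^{-s}\bigr) \le C$. A parallel Moser iteration with negative exponents, applied to $(u+k)$ in the equation it inherits, delivers $\bigl(\fint_{B_{2R}}(u+k)^{-s}\bigr)^{-1/s} \le C \inf_{B_R}(u+k)$. Chaining this inequality with the positive-exponent sup estimate via the BMO bridge produces the desired Harnack inequality $\sup_{B_R} u \le C\bigl[\inf_{B_R} u + R + R\mu\bigr]$.

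The main obstacle is the bookkeeping of constants through each step of the iteration. Every application of Young's inequality with $p(x)$ produces cross-terms coupling $p_-^{4R}$ and $p_+^{4R}$, and one must verify that log-Hölder continuity suppresses them to harmless factors bounded by $R^{p_+^{4R}-p_-^{4R}} \le e^{C\omega_{4R} p_{\max}}$. The explicit appearance of $\mu^{p_+^{4R}-p_-^{4R}}$ and of the two $L^{s\cdot}$-norms of $u$ in the constant is the precise quantitative residue of this bookkeeping, and is what distinguishes the variable-exponent Harnack from its constant-$p$ ancestor.
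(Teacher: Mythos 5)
This theorem is not proved in the paper at all: it is imported verbatim from \cite{Wo}, Theorem 2.1, so there is no internal proof to compare against. Your Moser-iteration outline (Caccioppoli estimate with the shift $k=R+R\mu$, Sobolev--Poincar\'e under log-H\"older continuity, the $\log(u+k)$/John--Nirenberg crossover, and the $p_+^{4R}-p_-^{4R}$ bookkeeping that produces the extra factors $\mu^{p_+^{4R}-p_-^{4R}}$ and $\|u\|_{L^{sq'}}^{p_+^{4R}-p_-^{4R}}$, $\|u\|_{L^{sr_0}}^{p_+^{4R}-p_-^{4R}}$ in the constant) is exactly the standard strategy used in \cite{Wo} and in the earlier variable-exponent literature you cite, so the approach is the right one and essentially the same as the source's. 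Be aware, though, that what you have written is a roadmap rather than a proof: the delicate steps --- deriving the Caccioppoli inequality when the test-function exponent cannot match the varying $p(x)$, and absorbing the resulting $\bigl(\fint (u+k)^{s}\bigr)^{(p_+^{4R}-p_-^{4R})/s}$-type defects uniformly through infinitely many iteration steps --- are asserted but not carried out, and they are where all the work in \cite{Wo} lies.
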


\medskip

We will also use the following result proven in \cite{Fan},
Theorem 1.1,

\begin{theo}\label{regu} Assume that $1<p_{\min}\le p(x)\le p_{\max}<\infty$, and that $p(x)$ has a modulus of continuity $\omega(r)=C_0 r^{\alpha_0}$
for some $0<\alpha_0< 1$. Let $f\in L^{\infty}(\Omega)$ and let
$u\in W^{1,p(\cdot)}(\Omega)\cap L^{\infty}(\Omega)$ be a solution
of
\begin{equation}
\Delta_{p(x)} u=f \mbox{
in }\Omega.
\end{equation}
Then, $u\in C^{1,\alpha}_{\rm loc}(\Omega)$, where the H\"older exponent $\alpha$ depends on $N$, $p_{\min}$, $p_{\max}$,
$||f||_{L^{\infty}(\Omega)}$, $||u||_{L^{\infty}(\Omega)}$, $\omega(r)$ and, for  any $\Omega'\subset \subset\Omega$,
$$\|u\|_{C^{1,\alpha}(\bar\Omega')}\leq C,$$
 the constant $C$ depending  on
$N$, $p_{\min}$, $p_{\max}$, $||f||_{L^{\infty}(\Omega)}$, $||u||_{L^{\infty}(\Omega)}$, $\omega(r)$ and
${\rm dist}(\Omega ', \partial\Omega)$.
\end{theo}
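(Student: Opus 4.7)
The plan is to prove the theorem in three stages: (i) establish a local $L^\infty$ bound for $|\nabla u|$, (ii) freeze the exponent and compare $u$ with a constant-exponent $p_0$-harmonic function, and (iii) iterate to obtain the Campanato characterization of $C^{1,\alpha}$. For stage (i) I would test the weak formulation with cutoffs of the form $\eta^{p(x)}(u-k)_\pm$, derive a Caccioppoli estimate for $|\nabla u|$, and run a Moser iteration in the variable-exponent setting, using the Sobolev embedding for $W^{1,p(\cdot)}$ recorded in the appendix. This yields $|\nabla u|\in L^\infty_{\mathrm{loc}}(\Omega)$ with a bound depending only on the stated data; after slightly shrinking the domain we may therefore assume $|\nabla u|\le M$.

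For stage (ii), fix $x_0\in\Omega'$ and a small radius $r$, set $p_0:=p(x_0)$, and let $v$ solve the frozen problem $\Delta_{p_0}v=0$ in $B_r(x_0)$ with $v=u$ on $\partial B_r(x_0)$. By the classical interior $C^{1,\alpha}$ theory of DiBenedetto and Tolksdorf, $v$ satisfies the Campanato-type decay
\begin{equation*}
\fint_{B_\rho(x_0)}\bigl|\nabla v-(\nabla v)_{B_\rho(x_0)}\bigr|^{p_0}\,dx\le C\Bigl(\frac{\rho}{r}\Bigr)^{p_0\alpha_*}\fint_{B_r(x_0)}|\nabla v|^{p_0}\,dx
\end{equation*}
for some $\alpha_*\in(0,1)$. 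To compare $u$ and $v$, I would test both equations against $u-v$, use the strict monotonicity of $\xi\mapsto|\xi|^{p_0-2}\xi$, and bound the error from the mismatch $p(x)-p_0$ using the pointwise inequality
\begin{equation*}
\bigl||\xi|^{p(x)-2}\xi-|\xi|^{p_0-2}\xi\bigr|\le C|p(x)-p_0|\,|\xi|^{p_0-1}\bigl(1+\bigl|\log|\xi|\bigr|\bigr).
\end{equation*}
Combined with the bound $|\nabla u|\le M$, the H\"older modulus $\omega(r)=C_0r^{\alpha_0}$, and $\|f\|_{L^\infty}<\infty$, this produces
\begin{equation*}
\fint_{B_r(x_0)}|\nabla u-\nabla v|^{p_0}\,dx\le Cr^{\delta}
\end{equation*}
for some $\delta>0$. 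Setting $\Phi(\rho):=\fint_{B_\rho(x_0)}|\nabla u-(\nabla u)_{B_\rho(x_0)}|^{p_0}\,dx$ and combining the previous two estimates gives the recursion $\Phi(\rho)\le C[(\rho/r)^{p_0\alpha_*}\Phi(r)+r^{\delta}]$; a standard iteration lemma (Giaquinta, Ch.~III) then yields $\Phi(\rho)\le C\rho^{p_0\alpha}$ for some $\alpha>0$, and the Campanato characterization gives $u\in C^{1,\alpha}_{\mathrm{loc}}$ with the stated dependence of constants.

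The main obstacle is the perturbation estimate in stage (ii): the logarithmic factor $\log|\nabla u|$ in the difference of the $p(x)$- and $p_0$-nonlinearities is controllable only once the $L^\infty$ bound on $|\nabla u|$ is in hand, so stages (i) and (ii) must be organized carefully and one must verify that the constant $C$ in the perturbation estimate depends only on $M$, $\omega$, and $\|f\|_\infty$. A secondary point is that the constants in the Caccioppoli and Campanato estimates must be tracked uniformly as $p_0$ ranges over $[p_{\min},p_{\max}]$, handling both the degenerate regime $p_0>2$ and the singular regime $p_0<2$; this is addressed by the DiBenedetto--Tolksdorf framework but requires attention near the transition $p_0=2$.
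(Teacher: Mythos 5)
First, a point of comparison: the paper does not prove this theorem at all — it is imported verbatim as Theorem 1.1 of \cite{Fan} and used as a black box. So there is no internal proof to match against; your sketch has to be judged on its own. Its architecture (freeze the exponent at $x_0$, compare with the $p_0$-harmonic replacement, absorb the mismatch $p(x)-p_0$ using the H\"older modulus, iterate the excess decay via Campanato) is indeed the strategy behind Fan's proof and the earlier variable-exponent regularity work of Coscia--Mingione and Acerbi--Mingione, so the overall route is the right one.

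There is, however, a genuine gap in stage (i), and it propagates into stage (ii). Testing the weak formulation with $\eta^{p(x)}(u-k)_\pm$ yields a Caccioppoli inequality for $u$ itself — hence local boundedness and H\"older continuity of $u$ by De Giorgi--Moser — but it does not yield ``a Caccioppoli estimate for $|\nabla u|$.'' To run a Moser iteration on the gradient you must differentiate the equation, which (a) presupposes $W^{2,2}_{\rm loc}$ regularity you do not yet have, and (b) for variable $p(x)$ produces the extra term $\nabla p\,|\nabla u|^{p(x)-2}\log|\nabla u|\,\nabla u$, which is of exactly the order you are trying to control. Since your stage (ii) perturbation estimate invokes $|\nabla u|\le M$ precisely to tame the logarithmic factor in
$\bigl||\xi|^{p(x)-2}\xi-|\xi|^{p_0-2}\xi\bigr|\le C|p(x)-p_0|\,|\xi|^{p_0-1}(1+|\log|\xi||)$,
the argument as organized is circular: the sup bound on the gradient is an output of the comparison scheme, not an admissible input to it. The standard repair is to take as a priori information only the higher integrability $|\nabla u|\in L^{p(\cdot)(1+\sigma)}_{\rm loc}$ supplied by a Gehring-type lemma; the logarithmic factor is then absorbed since $|\xi|^{p_0-1}\log|\xi|\le C_\sigma(1+|\xi|^{p_0-1+\sigma})$ and $r^{\alpha_0}\log(1/r)\to0$, and the $L^\infty$ bound on $\nabla u$ falls out of the excess-decay iteration itself by telescoping the averages $(\nabla u)_{B_\rho}$. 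One also needs a word on why $u-v$ is an admissible test function for both equations on $B_r(x_0)$ despite the mismatch between $W^{1,p(\cdot)}$ and $W^{1,p_0}$ on small balls; higher integrability handles this too. With that reorganization the rest of your outline — the DiBenedetto--Tolksdorf decay for $v$, the contribution of $\int f(u-v)$ via Poincar\'e and Young, the Campanato iteration, and the uniformity of constants over $p_0\in[p_{\min},p_{\max}]$ — is sound.
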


\medskip

In order to prove Theorem \ref{estim-lip}, we need to prove first some
auxiliary results.

\begin{lemm}\label{umenor2ep} Assume that $1<p_{\min}\le p_\ep(x)\le p_{\max}<\infty$
with  $p_\ep(x)$ Lipschitz continuous and $\|\nabla p_\ep\|_{L^{\infty}}\leq L$, for some $L>0$.
Let $u^{\ep}$ be a solution of $P_\ep(\fep, p_\ep)$  in  $B_{r_0}(x_0)$
with $\|u^{\ep}\|_{L^{\infty}(B_{r_0}(x_0))}\leq L_1$,
$\|\fep\|_{L^{\infty}(B_{r_0}(x_0))}\leq L_2$, such that
$u^{\ep}(x_0)\leq 2\ep$. Then, there exists $C>0$
 such
that, if $\ep\leq 1$,
$$|\nabla u^{\ep}(x_0)|\leq C,$$
with $C=C(N,L_1,L_2,\|\beta\|_{L^{\infty}}, p_{\min},p_{\max}, L,r_0)$.
\end{lemm}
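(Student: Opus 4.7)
My plan is to rescale $u^\ep$ at scale $\ep$, reducing the problem to one with a uniformly bounded right-hand side on a ball of radius $1$, where the Harnack inequality (Theorem \ref{har}) and the $C^{1,\alpha}$ regularity (Theorem \ref{regu}) can be applied with constants independent of $\ep$. I first dispose of the easy range $\ep \ge r_0/8$: there $\|\beta_\ep(u^\ep)\|_{L^\infty} \le 8\|\beta\|_{L^\infty}/r_0$ is bounded independently of $\ep$, so Theorem \ref{regu} applied to $u^\ep$ on $B_{r_0/2}(x_0)$ directly yields $|\nabla u^\ep(x_0)| \le C$. From here on I assume $\ep < r_0/8$.

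Next I would define $v(y) := u^\ep(x_0 + \ep y)/\ep$ for $y \in B_{r_0/\ep}(0)$ and $\tilde p_\ep(y) := p_\ep(x_0 + \ep y)$. A direct chain-rule computation shows that $v$ solves
\begin{equation*}
\Delta_{\tilde p_\ep(y)} v = \beta(v) + \ep\,\fep(x_0 + \ep y) \quad \text{in } B_{r_0/\ep}(0),
\end{equation*}
with $p_{\min} \le \tilde p_\ep \le p_{\max}$, $\|\nabla_y \tilde p_\ep\|_{L^\infty} \le \ep L \le L$, $v \ge 0$, $v(0) \le 2$, and right-hand side bounded in $L^\infty$ by $\|\beta\|_{L^\infty} + L_2$ uniformly in $\ep$.

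Then I would apply Theorem \ref{har} to $v$ on $B_4(0) \subset B_{r_0/\ep}(0)$ with $R = 1$ and $q_0 = \infty$: using $\inf_{B_1} v \le v(0) \le 2$, this gives $\sup_{B_1(0)} v \le C$ uniformly in $\ep$. Finally I would invoke Theorem \ref{regu} on $B_1(0)$: since $\tilde p_\ep$ is H\"older with any exponent $\alpha_0 \in (0,1)$ and constant controlled by $L$ (because it is Lipschitz with constant $\le L$), and since the right-hand side and $\|v\|_{L^\infty(B_1)}$ are uniformly bounded, Theorem \ref{regu} yields $\|v\|_{C^{1,\alpha}(\overline{B_{1/2}})} \le C$, hence $|\nabla v(0)| \le C$. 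Undoing the scaling, the identity $\nabla_x u^\ep(x_0) = \nabla_y v(0)$ gives the desired bound.

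The main obstacle will be verifying that the Harnack constant is genuinely uniform in $\ep$, because a priori $v$ may blow up (one only has $\|v\|_{L^\infty} \le L_1/\ep$). The key point is that the Lipschitz hypothesis on $p_\ep$ forces $\tilde p_+^4 - \tilde p_-^4 \le 8\ep L$, so the troublesome exponents $\mu^{\tilde p_+^4 - \tilde p_-^4}$ and $\|v\|_{L^{sq'}(B_4)}^{\tilde p_+^4 - \tilde p_-^4}$ appearing in the Harnack constant are of the form $(L_1/\ep)^{O(\ep L)} = e^{O(\ep L \log(L_1/\ep))}$, which is bounded on $(0,1]$ because $\ep\log(1/\ep) \le 1/e$ there. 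The same Lipschitz bound also provides the uniform log-H\"older modulus $\omega_4$ required by Theorem \ref{har}.
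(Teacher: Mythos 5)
The proposal is correct and follows essentially the same route as the paper: rescale by $\ep$, apply the Harnack inequality of Theorem \ref{har} using precisely the observation that $\tilde p_+-\tilde p_-=O(\ep L)$ tames the factor $(L_1/\ep)^{\tilde p_+-\tilde p_-}$ in the Harnack constant, and then conclude with the $C^{1,\alpha}$ estimate of Theorem \ref{regu}. The only cosmetic difference is your preliminary case split at $\ep\ge r_0/8$; the paper avoids it by applying Harnack at radius $\bar r_0/4$ with $\bar r_0=\min\{r_0,4\}$, which keeps the relevant balls inside the rescaled domain for all $\ep\le1$.
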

\begin{proof}
Let $v^{\ep}(x)=\frac{1}{\ep} u^{\ep}(x_0+\ep x)$. Then, denoting
$\bar p_\ep(x)=p_\ep(\ep x + x_0)$ and  $\bar\fep(x)=\ep\fep(\ep x + x_0)$,
we have, if $\ep\leq1$,
\begin{equation}
\Delta_{\bar p_\ep(x)}v^{\ep}=\beta(v^{\ep})+\bar\fep \mbox{ in
}B_{r_0}.\end{equation}
We will apply Harnack's Inequality (Theorem \ref{har}). Let $\bar r_0=\min\{r_0, 4\}$. We first observe that
$$\gamma:=(\bar p_\ep)_{+}^{\bar r_0}-(\bar p_\ep)_{-}^{\bar r_0}=\sup_{B_{\bar r_0}}\bar p_\ep-\inf_{B_{\bar r_0}}\bar p_\ep\le L\ep2\bar r_0,$$
so that
$$||\vep||_{L^{\infty}(B_{\bar r_0})}^{\gamma}\le ({L_1}/\ep)^{L\ep 2 \bar r_0}\le C_0(L,L_1,r_0).$$
It follows that
$$\sup_{B_{\bar r_0/4}}v^{\ep}\leq C_1[v^{\ep}(0)+\bar r_0/4+\mu \bar r_0/4],$$
for $\mu=\left(\frac{\bar r_0}{4}\|\beta(v^{\ep})+\bar\fep\|_{L^{\infty}(B_{\bar r_0}(x_0))}\right)^{\frac{1}{(\bar p_\ep)_{-}^{\bar r_0} - 1}}
\le C_2(L_2,\|\beta\|_{L^{\infty}}, p_{\min},r_0)$ and a constant $C_1$ with
$C_1=C_1(N,L_1,L_2,\|\beta\|_{L^{\infty}}, p_{\min},p_{\max}, L,r_0)$.

Now, observing that $v^{\ep}(0)\leq 2$, and using
the estimates of Theorem \ref{regu},  we have that
$$|\nabla \uep(x_0)|=|\nabla \vep(0)|\leq C,$$
with $C=C(N,L_1,L_2,\|\beta\|_{L^{\infty}}, p_{\min},p_{\max}, L,r_0).$
\end{proof}

\begin{lemm}\label{cota-barrera}
Assume that $1<p_{\min}\le p(x)\le p_{\max}<\infty$ with $p(x)$ Lipschitz continuous and
$\|\nabla p\|_{L^{\infty}}\leq L$, for some $L>0$. For $x_0\in\Bbb R^{N}$,
$\mu>0$, $\delta>0$, $A>0$, consider
 $$\psi(x)=A\left(\frac{e^{-\mu\frac{|x-x_0|^2}{{\delta}^2}}-e^{-\mu }}{e^{-\mu/16}-e^{-\mu }}\right).$$
 Assume  moreover that $\delta\le A\le A_0$. Then, given $D>0$, there exist
 $\tilde\mu=\tilde\mu(N,p_{\min}, p_{\max})$ and
 $\tilde r= \tilde r(p_{\min}, p_{\max}, L, D, A_0,\mu)$ such that, if $\mu\ge \tilde\mu$ and
 $\delta\le \tilde r$, there holds that
$$\Delta_{p(x)}\psi (x)\ge D\quad \mbox{ in }
B_{\delta}(x_0)\setminus \overline{B_{\delta/4}(x_0)}.$$
\end{lemm}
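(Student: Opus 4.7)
The plan is to verify the super-solution inequality by direct computation, exploiting the radial structure of $\psi$. Writing $\psi(x)=f(r)$ with $r=|x-x_0|$ and $f(r)=C_\mu(e^{-\mu r^2/\delta^2}-e^{-\mu})$, where $C_\mu=A/(e^{-\mu/16}-e^{-\mu})$, one computes $f'(r)=-(2C_\mu\mu r/\delta^2)e^{-\mu r^2/\delta^2}<0$ and $f''(r)=(2C_\mu\mu/\delta^2)e^{-\mu r^2/\delta^2}(2\mu r^2/\delta^2-1)$, the latter being positive throughout the annulus as soon as $\mu\ge 8$. Expanding $\operatorname{div}(|\nabla\psi|^{p(x)-2}\nabla\psi)$ via the product rule and writing $|\nabla\psi|^{p(x)-2}=\exp((p(x)-2)\log|\nabla\psi|)$ to produce the extra logarithmic contribution from the variable exponent, together with $D^2\psi(\nabla\psi,\nabla\psi)=(f')^2 f''$ for radial functions, yields the identity
$$\Delta_{p(x)}\psi = |f'|^{p(x)-2}\left[(p(x)-1)f'' + \frac{N-1}{r}f' + f'\log|f'|\,(\nabla p\cdot\hat r)\right],$$
where $\hat r=(x-x_0)/r$.

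The two classical bracket terms combine into
$$\frac{2C_\mu\mu}{\delta^2}e^{-\mu r^2/\delta^2}\left[(p(x)-1)\left(\frac{2\mu r^2}{\delta^2}-1\right)-(N-1)\right].$$
Since $2\mu r^2/\delta^2\ge\mu/8$ on the annulus, choosing $\tilde\mu=\tilde\mu(N,p_{\min},p_{\max})$ large enough that $(p_{\min}-1)(\mu/8-1)-(N-1)\ge(p_{\min}-1)\mu/16$ for all $\mu\ge\tilde\mu$ gives the lower bound
$$(p(x)-1)f''+\frac{N-1}{r}f'\ge\frac{(p_{\min}-1)\mu^2}{8\delta^2}\,C_\mu\,e^{-\mu r^2/\delta^2}.$$

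The variable-exponent term $f'\log|f'|(\nabla p\cdot\hat r)$ is the main technical point, and it is what distinguishes this barrier from a classical $p$-barrier. Using $|\nabla p|\le L$, the explicit form of $|f'|$, together with the hypotheses $\delta\le A\le A_0$ and the bound $C_\mu\ge A\,e^{\mu/16}$, one sees that $|f'|$ is trapped between explicit positive quantities of order $A/\delta$ depending only on $\mu$. Hence $|\log|f'||\le C_1(\mu,A_0)(1+|\log\delta|)$, and combined with $r\le\delta$ this gives
$$|f'\log|f'|(\nabla p\cdot\hat r)|\;\le\;\frac{2LC_\mu\mu}{\delta}\,e^{-\mu r^2/\delta^2}\,C_1(\mu,A_0)(1+|\log\delta|),$$
which is absorbed into half of the previous lower bound as soon as $\delta(1+|\log\delta|)$ is smaller than a quantity depending only on $p_{\min},L,A_0,\mu$. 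This fixes a first smallness threshold for $\tilde r$.

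Combining the two estimates yields
$$\Delta_{p(x)}\psi\;\ge\;\frac{(p_{\min}-1)\mu^2}{16\delta^2}\,C_\mu\,e^{-\mu r^2/\delta^2}\,|f'|^{p(x)-2}.$$
The final step is to show that the right-hand side is at least $D$. Using $C_\mu\ge Ae^{\mu/16}$, $A\ge\delta$ and the explicit form of $|f'|$, a direct computation reduces the right-hand side to a quantity of order $c(\mu,p_{\min},p_{\max},A_0)/\delta$, which exceeds $D$ once $\tilde r$ is chosen small enough depending additionally on $D$. The main obstacle will be this last step: the factor $|f'|^{p(x)-2}$ carries an exponent of indeterminate sign, so one must handle the cases $p(x)\ge 2$ and $p(x)\le 2$ separately, bounding $|f'|$ from the appropriate side and tracking the exponential $e^{-(p(x)-1)\mu r^2/\delta^2}$ which degenerates to $e^{-(p_{\max}-1)\mu}$ near $r=\delta$; together with the log-factor analysis this is what forces the final choice of $\tilde r=\tilde r(p_{\min},p_{\max},L,D,A_0,\mu)$.
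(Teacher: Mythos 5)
Your proof is correct and follows essentially the same route as the paper: the paper rescales $\psi$ to the normalized barrier $w(y)=M(e^{-\mu|y|^2}-e^{-\mu})$ on $B_1\setminus B_{1/4}$ and invokes the radial computation of Lemma B.4 in \cite{FBMW}, which yields exactly your decomposition into a positive $(p-1)f''+\frac{N-1}{r}f'$ contribution of order $\mu^2$ and a variable-exponent term of size $\|\nabla p\|_{L^\infty}(|\log M|+1)$ absorbed for $\delta$ small. You carry out the same computation directly in the $\delta$-annulus, including the correct case split on the sign of $p(x)-2$ in the final step, so the two arguments are equivalent up to the rescaling.
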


\begin{proof}
For $M>0$ and  $\mu>0$ let
\begin{equation}\label{defw}
w(x)=M(e^{-\mu|x|^2}-e^{-\mu}).
\end{equation}
The calculations in the proof of Lemma B.4 in \cite{FBMW} show
that if  $q(x)$ is a Lipschitz continuous function, with
$1<p_{\min}\le q(x)\le p_{\max}<\infty$, there exist
${\mu}_0={\mu}_0(p_{\max},p_{\min},N)$ and $\ep_0=\ep_0(p_{\min})$
such that, if $\mu\ge{\mu}_0$ and $\|\nabla q\|_{L^{\infty}}\leq
\ep_0$, then
\begin{align*}&e^{\mu|x|^2}(2M\mu)^{-1}|\nabla w|^{2-q(x)}\Delta_{q(x)}w\geq C_1 \mu-C_2
\|\nabla q\|_{L^{\infty}}(|\log M| +1)\quad
 \mbox{ in }B_1\setminus B_{1/4},\end{align*}
 with $C_1,C_2$ depending only on $p_{\min}$. If, in addition, $\mu\ge\mu_1(p_{\min})$,
 we get
\begin{align*}&e^{\mu|x|^2}(2M\mu)^{-1}|\nabla w|^{2-q(x)}\Delta_{q(x)}w\geq \frac{C_1}{2} \mu-C_2
\|\nabla q\|_{L^{\infty}} |\log M| \quad
 \mbox{ in }B_1\setminus B_{1/4},\end{align*}
 and therefore,
\begin{align*}&\Delta_{q(x)}w\geq e^{-\mu|x|^2}|\nabla w|^{q(x)-2}2M\mu\left(\frac{C_1}{2} \mu-C_2
\|\nabla q\|_{L^{\infty}}|\log M|\right) \quad
 \mbox{ in }B_1\setminus B_{1/4}.\end{align*}
 So that we have
 \begin{align*}&\Delta_{q(x)}w\geq e^{-\mu(p_{\max}-1)}M^{q(x)-1}\mu^{p_{\min}-1}\left(\tilde{C_1} \mu-\tilde{C_2}
\|\nabla q\|_{L^{\infty}} |\log M|\right) \quad
 \mbox{ in }B_1\setminus B_{1/4},\end{align*}
with $\tilde{C_1}$, $\tilde{C_2}$ depending on $p_{\min}$ and
$p_{\max}$  if, in addition, $\mu\ge 1$.

We now observe that, letting in \eqref{defw}
$$M=\frac{A}{\delta(e^{-\mu/16}-e^{-\mu })},$$
we have
 $$\psi(x)=A\left(\frac{e^{-\mu\frac{|x-x_0|^2}{{\delta}^2}}-e^{-\mu }}{e^{-\mu/16}-e^{-\mu }}\right)=
 \delta M\left(e^{-\mu|\frac{x-x_0}{\delta}|^2}-e^{-\mu }\right)=\delta w\left(\frac{x-x_0}{\delta}\right).$$

We want to show that the constants $\tilde\mu$, $\tilde r$ in the statement can be chosen in such a way that
\begin{equation}\label{desig}
\Delta_{p(x)}\psi(x) \ge D\quad \mbox{ in }
B_{\delta}(x_0)\setminus \overline{B_{\delta/4}(x_0)}.
\end{equation}
We notice that  showing \eqref{desig} is equivalent to showing that
\begin{equation}\label{desigdelta}
\Delta_{\bar{p}(x)} w (x)\ge \delta D \quad \mbox{ in }
B_1\setminus \overline{B_{1/4}},
\end{equation}
for $\bar{p}(x)=p(x_0+\delta x)$.

Since $||\nabla \bar{p}||_{L^{\infty}}=\delta||\nabla
p||_{L^{\infty}}\le \delta L$, the previous calculations give, if
$\mu$ is as above and $\delta \le r_1 =\frac{\ep_0}{L}$,
  \begin{align*}&\Delta_{{\bar p(x)}}w\geq e^{-\mu(p_{\max}-1)}M^{\bar p(x)-1}\mu^{p_{\min}-1}\left(\tilde{C_1} \mu-\tilde{C_2}
\delta L |\log M|\right) \quad
 \mbox{ in }B_1\setminus B_{1/4}.\end{align*}

Using that $A\ge \delta$,  we have $M\ge e^{\mu/16}\ge 1$,
implying that

 \begin{align*}\Delta_{{\bar p(x)}}w&\geq e^{-\mu(p_{\max}-1)}
\frac{1}{({{e^{-\mu/16}-e^{-\mu }}})^{p_{\min}-1}}\mu^{p_{\min}-1}\left(\tilde{C_1} \mu-\tilde{C_2}
\delta L \log M\right)\\
&=C_3(\mu)(\tilde{C_1} \mu-\tilde{C_2}
\delta L \log M) \qquad
 \mbox{ in }B_1\setminus B_{1/4}\end{align*}

(here $C_3(\mu)$ is a constant depending on $\mu,
p_{\min},p_{\max}$). Now using that
 \begin{equation*}
 -\delta L \log M\ge -1 -\delta L \mu,
 \end{equation*}
 if $\delta\le r_2=r_2(A_0, L)$ and $\mu\ge\mu_2$,
 we conclude that
  \begin{equation*}\Delta_{{\bar p(x)}}w\geq C_3(\mu)\frac{\tilde{C_1}}{4} \mu \qquad
 \mbox{ in }B_1\setminus B_{1/4},
\end{equation*}
if $\mu\ge\mu_3=\mu_3(p_{\min},p_{\max})$ and $\delta\le
r_3=r_3(p_{\min},p_{\max},L)$. This is,
 \begin{equation*}\Delta_{{\bar p(x)}}w\geq C_5,  \qquad
 \mbox{ in }B_1\setminus B_{1/4}
 \end{equation*}
with $C_5=C_5(\mu,  p_{\min},p_{\max})$. If we now let
$\tilde\mu=\max\{\mu_0, \mu_1, \mu_2, \mu_3,1\}$, fix
$\mu\ge\tilde\mu$ and take $\delta\le \tilde r=\min\{r_1, r_2,
r_3, \frac{C_5}{D}\}$, we conclude that \eqref{desigdelta} holds,
thus implying \eqref{desig}.
\end{proof}

\begin{lemm}\label{uep=ep} Assume that $1<p_{\min}\le p_\ep(x)\le p_{\max}<\infty$
with  $p_\ep(x)$ Lipschitz continuous and $\|\nabla p_\ep\|_{L^{\infty}}\leq L$, for some $L>0$.
Let $u^{\ep}$ be a solution of $P_\ep(\fep, p_\ep)$ in $B_{1}$ with $\|u^{\ep}\|_{L^{\infty}(B_{1})}\leq L_1$,
$\|\fep\|_{L^{\infty}(B_{1})}\leq L_2$
and $0\in\partial\{u^{\ep}>\ep\}$. Then, there exists $0<r_0<1$ such that, for $x\in
B_{r_0}\cap\{u^\ep>\ep\}$ and $\ep\leq 1$,
$$u^{\ep}(x)\leq \ep+C\mbox{dist}(x,\{u^{\ep}\leq \ep\}\cap B_1),$$ with $r_0=r_0(N,L_1,L_2, p_{\min},p_{\max}, L)$ and
$C=C(N,L_1,L_2,\|\beta\|_{L^{\infty}}, p_{\min},p_{\max}, L)$.
\end{lemm}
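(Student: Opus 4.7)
The plan is to argue by contradiction via a blow-up/rescaling argument. Suppose the conclusion fails: for each $k\ge 1$ there exist $\ep_k$, a solution $u^{\ep_k}$ of $P_{\ep_k}(f^{\ep_k},p_{\ep_k})$ on $B_1$ satisfying the hypotheses, and a point $x_k\in B_{1/k}\cap\{u^{\ep_k}>\ep_k\}$ with
\[
u^{\ep_k}(x_k)-\ep_k > k\,d_k, \qquad d_k := \operatorname{dist}(x_k,\{u^{\ep_k}\le\ep_k\}\cap B_1).
\]
Since $\|u^{\ep_k}\|_\infty\le L_1$, one has $d_k\to 0$. Pick $y_k\in\{u^{\ep_k}\le\ep_k\}\cap\overline{B_1}$ realizing the distance; for $k$ large $y_k$ is interior, so continuity forces $u^{\ep_k}(y_k)=\ep_k$. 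Lemma~\ref{umenor2ep} at $y_k$ yields $|\nabla u^{\ep_k}(y_k)|\le C_\ast$ uniformly in $k$, and its proof (via the rescaling $v^{\ep_k}(z)=u^{\ep_k}(y_k+\ep_k z)/\ep_k$ and Theorem~\ref{regu}) actually provides a uniform $C^{1,\alpha}$ bound for $u^{\ep_k}$ on the ball $B_{c_0\ep_k}(y_k)$, with $c_0>0$ independent of $k$.

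A first dichotomy removes an easy sub-case: if $d_k\le c_0\ep_k$ then $x_k\in B_{c_0\ep_k}(y_k)$ and integrating the uniform Lipschitz bound along the segment from $y_k$ to $x_k$ gives $u^{\ep_k}(x_k)-\ep_k\le C_\ast d_k$, contradicting the failure assumption for $k$ large. Hence, along a subsequence, $d_k>c_0\ep_k$, and I blow up at scale $d_k$: set $\nu_k:=(y_k-x_k)/d_k\in\partial B_1$ and
\[
\tilde v_k(y) := \frac{u^{\ep_k}(x_k+d_k y)-\ep_k}{d_k},\qquad y\in B_1.
\]
Because $B_{d_k}(x_k)\subset\{u^{\ep_k}>\ep_k\}$, $\beta_{\ep_k}(u^{\ep_k})\equiv 0$ on that ball, so
\[
\Delta_{\bar p_k(y)}\tilde v_k = d_k\,f^{\ep_k}(x_k+d_k y)\ \text{on}\ B_1,\qquad \bar p_k(y):=p_{\ep_k}(x_k+d_k y),
\]
with right-hand side of $L^\infty$-norm $\le d_k L_2\to 0$. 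Moreover $\tilde v_k\ge 0$, $\tilde v_k(\nu_k)=0$, and $\tilde v_k(0)>k\to\infty$.

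Normalizing $w_k:=\tilde v_k/A_k$ with $A_k:=\|\tilde v_k\|_{L^\infty(B_1)}\ge\tilde v_k(0)\to\infty$ produces a sequence with $\|w_k\|_\infty=1$, $w_k(\nu_k)=0$, solving a $p(x)$-Laplace-type equation whose right-hand side is $O(A_k^{1-p_{\min}})\to 0$ in $L^\infty$. Theorem~\ref{regu} yields uniform $C^{1,\alpha}_{\mathrm{loc}}(B_1)$ estimates, so along a subsequence $w_k\to w$ in $C^1_{\mathrm{loc}}(B_1)$, where $w$ is a non-trivial non-negative $p^\ast$-harmonic function on $B_1$ (with $p^\ast:=\lim\bar p_k(0)$ constant, since $d_k\to 0$ and $p_{\ep_k}$ is uniformly Lipschitz), and $w(\nu)=0$ at $\nu:=\lim\nu_k\in\partial B_1$. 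The bound $|\nabla w_k(\nu_k)|=|\nabla u^{\ep_k}(y_k)|/A_k\le C_\ast/A_k\to 0$, combined with the uniform $C^{1,\alpha}$ bound of $w_k$ on a neighborhood of $\nu_k$ of radius $\sim\ep_k/d_k$ (of uniformly positive size on the subsequence where $\ep_k/d_k$ stays bounded below), gives $|\nabla w(\nu)|=0$ in the limit. This contradicts the Hopf boundary-point lemma for the $p^\ast$-Laplacian, which forces the inward normal derivative of a non-trivial non-negative $p^\ast$-harmonic function at a boundary zero to be strictly positive.

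The main obstacle is the sub-case $\ep_k/d_k\to 0$, in which the uniform $C^{1,\alpha}$ neighborhood around $y_k$ shrinks in the blow-up coordinates, so the direct passage to the limit of $|\nabla w_k(\nu_k)|$ does not immediately pin down $\nabla w(\nu)$. I would handle this by either a secondary blow-up at $\nu_k$ at the appropriate smaller scale (chosen so that the Lipschitz bound from Lemma~\ref{umenor2ep} survives in the limit), or a Harnack-chain argument along the segment $y_k\to x_k$, using the pointwise gradient bound at $y_k$ as the chain's seed to transfer smallness of $\tilde v_k$ into the interior. The variable-exponent complication is mild: uniform Lipschitzness together with $d_k\to 0$ makes $\bar p_k\to p^\ast$ uniformly on $B_1$, so the limit equation is the classical constant-exponent $p^\ast$-Laplacian.
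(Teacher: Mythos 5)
Your strategy (contradiction, blow-up at scale $d_k$, Hopf at the touching point) is genuinely different from the paper's, which is a direct barrier argument: the paper first uses Harnack on $B_{\delta_0/4}(x_0)$ to get $\inf_{B_{\delta_0/4}(x_0)}(u^\ep-\ep)\ge c_1m_0$, then inserts the explicit annular subsolution of Lemma \ref{cota-barrera} on $B_{\delta_0}(x_0)\setminus \overline{B_{\delta_0/4}(x_0)}$, compares, and reads off $m_0\lesssim\delta_0$ from the gradient bound of Lemma \ref{umenor2ep} at a touching point $y_0\in\partial B_{\delta_0}(x_0)\cap\partial\{u^\ep>\ep\}$. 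Your first dichotomy ($d_k\le c_0\ep_k$) is correct, and in the intermediate regime $\ep_k/d_k\ge c>0$ your argument can be closed (indeed without Hopf: the Lipschitz bound of Lemma \ref{umenor2ep} holds on a ball of radius comparable to $1$ around $\nu_k$ in blow-up coordinates, so $\tilde v_k\le C$ at a fixed interior point, and a Harnack chain to the origin bounds $\tilde v_k(0)$, contradicting $\tilde v_k(0)>k$). There is also a repairable normalization issue: with $A_k=\|\tilde v_k\|_{L^\infty(B_1)}$ the sup may concentrate at $\partial B_1$ and the locally uniform limit $w$ could vanish identically; you should normalize by $\tilde v_k(0)$ and use interior Harnack chains to get local boundedness and $w(0)=1$.

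The genuine gap is the regime $\ep_k/d_k\to 0$, which you flag as "the main obstacle" but do not resolve — and it is not a corner case, it is the generic and essential case of the lemma (for most $x$ near the free boundary one expects $\operatorname{dist}(x,\{u^\ep\le\ep\})\gg\ep$). In that regime the only quantitative information at $\nu_k$ is the gradient bound on $B_{c_0\ep_k}(y_k)$, which shrinks to a point in the $d_k$-coordinates; since $w_k\to w$ only locally uniformly in the open ball $B_1$, the limit $w$ carries no information at all near $\nu\in\partial B_1$ (it need not even extend continuously there), so neither $w(\nu)=0$ nor $\nabla w(\nu)=0$ can be extracted and Hopf cannot be invoked. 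Your two proposed repairs both face real obstructions: a Harnack chain from scale $\ep_k$ up to scale $d_k$ needs $\sim\log(d_k/\ep_k)$ balls and the Harnack constant compounds geometrically, so the smallness at $y_k$ is not transferred; and a secondary blow-up at an intermediate scale $\rho_k$ with $\ep_k\ll\rho_k\ll d_k$ centered at $y_k$ meets the set $\{u^{\ep_k}\le\ep_k\}$, where the rescaled reaction term $\rho_k\beta_{\ep_k}(u^{\ep_k})$ is of order $\rho_k/\ep_k\to\infty$ and the limit equation is not under control. What is missing is precisely a scale-invariant device bridging the scales $\ep$ and $\delta_0$; in the paper this is the explicit barrier of Lemma \ref{cota-barrera} together with the comparison principle (Remark \ref{desip}), which reduces the whole estimate to the single-point, $\ep$-scale gradient bound at $y_0$. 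Without that (or an equivalent substitute), the proof is incomplete.
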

\begin{proof}
Let $0<r_0<1/4$ be a constant to be chosen later. For $x_0\in B_{r_0}\cap\{u^\ep>\ep\}$, take $m_0=u^{\ep}(x_0)-\ep$
and $\delta_0=\mbox{dist}(x_0,\{u^{\ep}\leq\ep\}\cap B_1)$. Since
$0\in \partial \{u^{\ep}>\ep\}\cap B_1$, $\delta_0\leq r_0$. We
want to prove that $m_0\leq C \delta_0$, with
$C= C(N,L_1,L_2,\|\beta\|_{L^{\infty}}, p_{\min},p_{\max}, L)$.

Since $B_{\delta_0}(x_0)\subset \{u^{\ep}>\ep\}\cap B_1$, we have
that $u^{\ep}-\ep> 0$ in $B_{\delta_0}(x_0)$ and
$\Delta_{p_\ep(x)}(u^{\ep}-\ep)=\fep$. By Harnack's Inequality (Theorem \ref{har})
$$\sup_{B_{\delta_0/4}(x_0)}(\uep-\ep)\leq C_1[\inf_{B_{\delta_0/4}(x_0)}(\uep-\ep)+\delta_0/4+\hat\mu \delta_0/4],$$
for $\hat\mu=\left(\frac{\delta_0}{4}\|\fep\|_{L^{\infty}(B_{\delta_0(x_0)})}\right)^{\frac{1}{(p_\ep)^{\delta_0}_- - 1}}\le C_0(L_2,p_{\min})$, with
$C_1=C_1(N,L_1,L_2, p_{\min},p_{\max}, L)$.
It follows that
$$m_0\leq C_1\inf_{B_{\delta_0/4}(x_0)}(\uep-\ep)+C_2\delta_0,$$
with
$C_2=C_2(N,L_1,L_2, p_{\min},p_{\max}, L)$.

If there holds that $m_0\le 2C_2\delta_0$, the conclusion follows.

So let us assume that $m_0> 2C_2\delta_0$.
Then, there exists
$c_1=c_1(N,L_1,L_2,p_{\min},p_{\max}, L)$ such that
$$c_1 m_0\le\inf_{B_{\delta_0/4}(x_0)}(u^{\ep}-\ep).$$

If $c_1 m_0\le \delta_0$ there is nothing to prove. So now assume that $c_1 m_0> \delta_0$.

Let us consider
 $$\psi(x)=c_1 m_0 \left(\frac{e^{-\mu\frac{|x-x_0|^2}{{\delta_0}^2}}-e^{-\mu }}{e^{-\mu/16}-e^{-\mu }}\right),$$
with $\mu=\tilde\mu(N,p_{\min}, p_{\max})$, the constant in Lemma \ref{cota-barrera}.

Then, observing that $c_1 m_0\le c_1 L_1$, we can apply Lemma
\ref{cota-barrera} with $\delta=\delta_0$, $A=c_1 m_0$,
$A_0=c_1L_1$ and $D=L_2$, if there holds that $\delta_0\le \tilde
r$, where $\tilde r= \tilde r(p_{\min}, p_{\max}, L, D, A_0,\mu)$
is the constant in Lemma \ref{cota-barrera}.

If we choose $r_0=\min\{\tilde r,1/8\}$ above, we have $r_0=r_0(N,L_1,L_2, p_{\min},p_{\max}, L)$ and Lemma \ref{cota-barrera} applies,
so we get
 $$\begin{cases} \Delta_{p_\ep(x)}\psi (x)\ge L_2\ge\fep &\quad \mbox{ in } B_{\delta_0}(x_0)\setminus \overline{B_{\delta_0/4}(x_0)}\\
\psi =0 & \quad \mbox{ on }\partial B_{\delta_0}(x_0)\\
\psi=c_1 m_0 &\quad \mbox{ on }\partial B_{\delta_0/4}(x_0).
\end{cases}
$$

By the comparison principle (see the appendix), we have
\begin{equation}\label{compphiuep} \psi(x)\leq u^{\ep}(x)-\ep \quad \mbox{ in }
\overline{B_{\delta_0}(x_0)}\setminus
{B}_{\delta_0/4}(x_0).\end{equation} Take $y_0\in
\partial {B}_{\delta_0}(x_0)\cap \partial \{u^{\ep}>\ep\}$. Then,
$y_0\in \overline{B_{1/2}}$ and
\begin{equation}\label{psicero}
\psi(y_0)=u^{\ep}(y_0)-\ep=0.
\end{equation}
Let $v^{\ep}(x)=\frac1\ep u^{\ep}(\ep x+y_0)$, ${\bar p}_\ep(x)=p_\ep(\ep x + y_0)$ and  $\bar\fep(x)=\ep\fep(\ep x + y_0)$. Then if $\ep\le 1$
 we have that $\Delta_{{\bar p}_\ep(x)} v^{\ep}=\beta(v^ {\ep})+\bar\fep$ in $B_{1/2}$ and
 $v^{\ep}(0)=1$.
Therefore, by Harnack's Inequality (Theorem \ref{har}),  using similar arguments as those employed in the proof of Lemma \ref{umenor2ep}, we obtain
$\max_{\overline{B}_{1/8}} v^{\ep}\leq \widetilde{c}=\widetilde{c}(N,L_1,L_2,\|\beta\|_{L^{\infty}}, p_{\min},p_{\max}, L)$.

Now, by Theorem \ref{regu}, we get
\begin{equation}\label{graduv14}
|\nabla u^{\ep}(y_0)|=|\nabla v^{\ep}(0)|\leq c_3,
\end{equation}
with $c_3=c_3(N,L_1,L_2,\|\beta\|_{L^{\infty}}, p_{\min},p_{\max},
L)$. Finally, by \eqref{compphiuep}, \eqref{psicero} and
\eqref{graduv14}, we have that $|\nabla \psi(y_0)|\leq |\nabla
u^{\ep}(y_0)|\leq c_3$. Since $|\nabla \psi(y_0)|= c_1 m_0
\frac{c(\mu)}{\delta_0}$, we obtain
$$m_0\leq \frac{c_3}{c_1 c(\mu)}\delta_0$$
and the result follows.

\end{proof}

Now, we can prove the following important result

\begin{prop}\label{cotagradsing}Assume that $1<p_{\min}\le p_\ep(x)\le p_{\max}<\infty$
with  $p_\ep(x)$ Lipschitz continuous and $\|\nabla p_\ep\|_{L^{\infty}}\leq L$, for some $L>0$.
Let $u^{\ep}$ be a solution of $P_\ep(\fep, p_\ep)$ in  $B_1$ with $\|u^{\ep}\|_{L^{\infty}(B_{1})}\leq L_1$ and
$\|\fep\|_{L^{\infty}(B_{1})}\leq L_2$. Assume that  $0\in \partial\{u^{\ep}>\ep\}$.
Then, there exists $0<r_1<1$ such that, for $x\in B_{r_1}$ and $\ep\leq 1$,
$$|\nabla u^{\ep}(x)|\leq C$$ with $r_1=r_1(N,L_1,L_2, p_{\min},p_{\max}, L)$ and $C=C(N,L_1,L_2,\|\beta\|_{L^{\infty}}, p_{\min},p_{\max}, L)$.

\end{prop}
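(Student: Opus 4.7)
The plan is to establish the gradient bound at every $x_0 \in B_{r_1}$ by combining Lemmas \ref{umenor2ep} and \ref{uep=ep} through a dichotomy on the value of $u^\ep(x_0)$. I would choose $r_1 := \min\{1/2, r_0/2\}$ with $r_0$ as in Lemma \ref{uep=ep}, so that $B_{1/2}(x_0) \subset B_1$ and $B_{2r_1} \subset B_{r_0}$ for every $x_0 \in B_{r_1}$. If $u^\ep(x_0) \le 2\ep$, Lemma \ref{umenor2ep} applied on $B_{1/2}(x_0)$ yields the bound directly.

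Suppose now $u^\ep(x_0) > 2\ep$. Since $u^\ep$ is continuous (by Theorem \ref{regu}, which applies because Lipschitz exponents satisfy the required H\"older modulus), the hypothesis $0 \in \partial\{u^\ep > \ep\}$ gives $u^\ep(0) = \ep$, so $d_0 := \operatorname{dist}(x_0, \{u^\ep \le \ep\} \cap B_1) \le |x_0| \le r_1$. Lemma \ref{uep=ep} applied at $x_0$ provides the complementary lower bound $d_0 \ge (u^\ep(x_0) - \ep)/C_0 > \ep/C_0$. On $B_{d_0}(x_0) \subset \{u^\ep > \ep\}$, the function $w := u^\ep - \ep$ is positive and satisfies $\Delta_{p_\ep(x)} w = f^\ep$, because $\beta_\ep(u^\ep) \equiv 0$ on $\{u^\ep \ge \ep\}$. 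A second application of Lemma \ref{uep=ep} together with the triangle inequality gives $w \le 2 C_0 d_0$ on $B_{d_0}(x_0)$.

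Next I would perform the natural rescaling $v(y) := w(x_0 + d_0 y)/d_0$ for $y \in B_1$. A direct computation shows
\[
\Delta_{\bar p_\ep(y)} v = d_0 \bar f^\ep(y) \quad \text{in } B_1,
\]
with $\bar p_\ep(y) := p_\ep(x_0 + d_0 y)$ and $\bar f^\ep(y) := f^\ep(x_0 + d_0 y)$; the key algebraic point is that the multiplicative prefactor $(d_0 \cdot d_0^{-1})^{p-1} = 1$ disappears, which is why this particular scaling is compatible with the variable exponent. All hypotheses of Theorem \ref{regu} hold uniformly in $\ep$ and $x_0$: $0 \le v \le 2 C_0$, $p_{\min} \le \bar p_\ep \le p_{\max}$, $\|\nabla \bar p_\ep\|_{L^\infty} \le d_0 L \le L$, and $\|d_0 \bar f^\ep\|_{L^\infty} \le L_2$. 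Therefore $|\nabla v(0)| \le C$, and since $|\nabla u^\ep(x_0)| = |\nabla w(x_0)| = |\nabla v(0)|$, the desired bound follows.

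The main obstacle is ensuring that the constant produced by Theorem \ref{regu} in the rescaled picture is independent of the scale $d_0 \in (\ep/C_0, r_1]$. The cornerstone is that the linear growth rate supplied by Lemma \ref{uep=ep} matches exactly the scaling of $v$, turning the a priori large sup norm $\|w\|_{L^\infty(B_{d_0}(x_0))}/d_0$ into the uniform bound $2C_0$. Without this matching of scales the sup norm of $v$ would blow up as $d_0 \to 0$, which is precisely what would occur as $\ep \to 0$, and Theorem \ref{regu} would not deliver an $\ep$-independent estimate.
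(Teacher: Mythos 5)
Your argument is correct and follows essentially the same route as the paper's proof: a dichotomy between points where $u^\ep$ is of order $\ep$ (handled by Lemma \ref{umenor2ep}) and points where it is not (handled by the linear growth estimate of Lemma \ref{uep=ep}, a rescaling by the distance to $\{u^\ep\le\ep\}$, and the uniform interior estimates of Theorem \ref{regu}). The only difference is that by rescaling $u^\ep-\ep$ rather than $u^\ep$ you make the sup norm of the rescaled function uniformly bounded without comparing $\ep$ to the distance $\delta_0$, thereby avoiding the paper's auxiliary sub-case $\ep\ge\bar c\,\delta_0$ and the choice of the constant $\bar c$; this is a harmless, slightly cleaner variant of the same idea.
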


\begin{proof}
By Lemma \ref{umenor2ep} we know that if $x_0\in \{u^{\ep}\leq
2\ep\}\cap B_{3/4}$ then,
$$|\nabla u^{\ep}(x_0)|\leq C_0$$ with
$C_0=C_0(N,L_1,L_2,\|\beta\|_{L^{\infty}}, p_{\min},p_{\max}, L)$.

Let $r_0=r_0(N,L_1,L_2, p_{\min},p_{\max}, L)$ be as in Lemma \ref{uep=ep}.

 Let
$x_0\in B_{r_0/2}\cap \{u^{\ep}>\ep\}$ and
$\delta_0=\mbox{dist}(x_0,\{u^{\ep}\leq \ep\})$.

As $0\in\partial\{u^{\ep}>\ep\}$ we have that $\delta_0\leq r_0/2$.
Therefore, $B_{\delta_0}(x_0)\subset \{u^{\ep}>\ep\}\cap B_{r_0}$
and then $\Delta_{p_\ep(x)} u^{\ep}=\fep$ in $B_{\delta_0}(x_0)$ and,  by Lemma
\ref{uep=ep},
\begin{equation}\label{ecuuepig}
u^{\ep}(x)\leq \ep+ C_1 \mbox{dist}(x,\{u^{\ep}\leq \ep\}) \quad
\mbox{ in } B_{\delta_0}(x_0),
\end{equation}
with $C_1=C_1(N,L_1,L_2,\|\beta\|_{L^{\infty}}, p_{\min},p_{\max}, L)$.
\begin{enumerate}
\item

Suppose that $\ep<\bar{c} \delta_0$ with $\bar{c}$ to be
determined. Then, \eqref{ecuuepig} gives
\begin{equation*}
\sup_{B_{\delta_0}(x_0)} u^{\ep}\leq \ep+ C_1 2\delta_0\le (\bar c+2C_1)\delta_0.
\end{equation*}
Now let $v^{\ep}(x)=\frac1{\delta_0}{u^{\ep}(x_0+\delta_0 x)}$ and $p_\ep^{\delta_0}(x)=p_\ep(x_0+\delta_0 x)$.
Then, we have $\Delta_{p_\ep^{\delta_0}(x)}v^{\ep}=\delta_0{\fep} (x_0+\delta_0  x)$ in $B_1$,
with
$$\sup_{B_1} v^{\ep} = \frac{1}{\delta_0}
\sup_{B_{\delta_0}(x_0)} u^{\ep} \le (\bar c+2C_1).$$

Therefore, by Theorem \ref{regu}
$$|\nabla u^{\ep}(x_0)|=|\nabla v^{\ep}(0)|\leq \widetilde{C},$$
with $\widetilde{C}=\widetilde{C}(N,L_1,L_2,\|\beta\|_{L^{\infty}}, p_{\min},p_{\max}, L, \bar c)$.

\item Suppose that $\ep\geq \bar{c} \delta_0$. By \eqref{ecuuepig}
we have
$$u^{\ep}(x_0)\leq \ep+ C_1 \delta_0\leq \Big (1+\frac{C_1}{\bar{c}}\Big)\ep< 2 \ep,$$
if we choose $\bar{c}$ big enough. By Lemma \ref{umenor2ep}, we
have $|\nabla u^{\ep}(x_0)|\leq C$, with\\
$C=C(N,L_1,L_2,\|\beta\|_{L^{\infty}}, p_{\min},p_{\max}, L)$.
\end{enumerate}
The result follows.
\end{proof}

As a consequence of the previous results we obtain Theorem \ref{estim-lip}. In fact,

\begin{proof}[Proof of Theorem \ref{estim-lip}]
Let $0<\tau<1$ be such that $\forall x\in \Omega'$,
$\overline{B_{2\tau}(x)}\subset \Omega$, and let $\ep\leq \tau$.
Let $r_1$ be the constant in Proposition \ref{cotagradsing}, corresponding to $N$, $\frac{L_1}{\tau}$, $L_2$, $p_{\min},p_{\max}, L$
(i.e., $r_1=r_1(N,\frac{L_1}{\tau},L_2, p_{\min},p_{\max}, L)$).

Let
$x_0\in \Omega'$.

\begin{enumerate}
\item If $\delta_0=\mbox{dist} (x_0,\partial\{u^{\ep}>\ep\})<
\tau r_1$, let $y_0\in\partial\{u^{\ep}>\ep\}$ such that
$|x_0-y_0|=\delta_0$. Let $v^{\ep}(x)=\frac1\tau{u^{\ep}(y_0+\tau x)}$, ${\bar p_\ep}(x)=p_\ep(y_0+\tau x)$, $\bar\fep(x)=\tau\fep(y_0+\tau x)$
and $\bar{x}=\frac{x_0-y_0}{\tau},$ then $|\bar{x}|<r_1$.
There holds that $\|v^{\ep}\|_{L^{\infty}(B_{1})}\leq \frac{L_1}{\tau}$,
 $\|\nabla {\bar p_\ep}\|_{L^{\infty}}\leq L$ and
$\|\bar\fep\|_{L^{\infty}(B_{1})}\leq L_2$.

As $0\in
\partial\{v^{\ep}>\ep/\tau\}$ and $\Delta_{{\bar p_\ep}(x)}v^{\ep}=\beta_{\ep/\tau} (v^{\ep})+\bar\fep$ in
$B_1$, we have by Proposition \ref{cotagradsing}
$$|\nabla u^{\ep}(x_0)|=|\nabla v^{\ep}(\bar{x})|\leq C_1(N,L_1,L_2,\|\beta\|_{L^{\infty}}, p_{\min},p_{\max}, L,\tau).$$
\item If $\delta_0=\mbox{dist} (x_0,\partial\{u^{\ep}>\ep\})\geq
\tau r_1$, there holds that
\begin{enumerate}
\item $B_{\tau r_1}(x_0)\subset \{u^{\ep}>\ep\},$ or \item
$B_{\tau r_1}(x_0)\subset \{u^{\ep}\leq \ep\}.$
\end{enumerate}
In the first case, $\Delta_{p_\ep(x)} u^{\ep}=\fep$ in $B_{\tau r_1}(x_0)$.
Therefore, by Theorem \ref{regu}
$$|\nabla u^{\ep}(x_0)|\leq
C_2(N,L_1,L_2, p_{\min},p_{\max}, L,\tau).$$ In the second case, we can apply
Lemma \ref{umenor2ep} and we have,
$$|\nabla u^{\ep}(x_0)|\leq
C_3(N,L_1,L_2,\|\beta\|_{L^{\infty}}, p_{\min},p_{\max}, L,\tau).$$ The result is
proved.
\end{enumerate}
\end{proof}

\end{section}

\begin{section}{Passage to the limit}

Since we have that $|\nabla u^{\ep}|$ is locally bounded by a
constant independent of $\ep$, we have that there exists a
function $u \in Lip_{\rm{loc}}(\Omega)$ such that, for a
subsequence $\ep_j\to 0$, $u^{\ep_j}\to u$. In this section we
will prove some properties of the function $u$.

\begin{lemm}
\label{clw1-Lemma 3.1} Let $\uep$ be a family of solutions to
\begin{equation}
\tag{$P_\ep(\fep, p_\ep)$}
\Delta_{p_\ep(x)}\uep={\beta}_{\varepsilon}(\uep)+\fep, \quad
u^{\ep}\geq 0
\end{equation}
in a domain $\Omega\subset \Bbb R^{N}$. Let us assume that
$||\uep||_{L^\infty(\Omega)}\le L_1$ and
$||\fep||_{L^\infty(\Omega)}\le L_2$ for some $L_1>0$, $L_2
>0$. Assume moreover that $1<p_{\min}\le p_\ep(x)\le p_{\max}<\infty$ and that
$p_\ep(x)$ are Lipschitz continuous with $\|\nabla
p_\ep\|_{L^{\infty}}\leq L$, for some $L>0$.

Then, for any sequence $\ep_{j}\to 0$ there exist a subsequence
$\ep'_{j}\to 0$ and functions $u\in
\mbox{Lip}_{\rm{loc}}(\Omega)$, $f\in L^\infty(\Omega)$ and $p\in
\mbox{Lip}(\Omega)$, with $1<p_{\min}\le p(x)\le p_{\max}<\infty$
and $\|\nabla p\|_{L^{\infty}}\leq L$, such that
\begin{enumerate}
\item $u^{\ep'_j}\to u$ uniformly on compact subsets of $\Omega$,
\item $f^{\ep'_j}\rightharpoonup f$   $*-$weakly in $L^\infty(\Omega)$,
\item $p_{\ep'_j}\to p$ uniformly on compact subsets of $\Omega$,
\item $\Delta_{p(x)} u  \ge f$ in the distributional sense in
$\Omega$,
\item $\Delta_{p(x)} u = f$ in  $\{u>0\}$.
\item There exists a nonnegative Radon  measure $\mu$ such that
$\beta_{\ep'_j}(u^{\ep'_j})\rightharpoonup \mu$ as measures in
$\Omega'$, for every $\Omega'\subset\subset \Omega$.
\item There holds
$$
-\int_{\Omega} |\nabla u|^{p(x)-2}\nabla u \cdot \nabla
\varphi\, dx =\int_{\Omega} \varphi \, d\mu + \int_{\Omega} f \varphi \, dx
$$
for every $\varphi\in C^{\infty}_0(\Omega)$. \item $\nabla
u^{\ep'_j}\rightharpoonup \nabla u$ weakly in
$L_{\rm{loc}}^{p(\cdot)}(\Omega)$. \item If  $p(x)\equiv p_0$,
with $p_0$ a constant, then $\nabla u^{\ep'_j}\rightarrow \nabla
u$ in $L_{\rm{loc}}^{p_0}(\Omega)$.
\end{enumerate}
\end{lemm}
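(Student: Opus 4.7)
I would split the argument into a compactness stage handling items (1)--(3), (6), (8); a regularity-based argument for (5) inside the positivity set; and a Minty monotonicity step for the nonlinear passage to the limit required in (4), (7), and (9).

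For items (1)--(3), Theorem \ref{estim-lip} yields an equi-Lipschitz bound for $\{u^{\varepsilon_j}\}$ on compacts of $\Omega$, so Arzel\`a--Ascoli extracts $u^{\varepsilon'_j}\to u\in\mathrm{Lip}_{\mathrm{loc}}(\Omega)$ uniformly on compacts. Banach--Alaoglu then gives $f^{\varepsilon'_j}\rightharpoonup f$ weak-$\ast$ in $L^\infty(\Omega)$, and the uniform Lipschitz control on $\{p_{\varepsilon_j}\}$ produces $p_{\varepsilon'_j}\to p$ uniformly on compacts with $p$ inheriting $1<p_{\min}\le p\le p_{\max}$ and $\|\nabla p\|_{L^\infty}\le L$. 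For (6), testing the equation against nonnegative $\varphi\in C_c^\infty(\Omega)$ and using the uniform $L^\infty_{\mathrm{loc}}$ bound on $\nabla u^{\varepsilon'_j}$ bounds the total variation of $\beta_{\varepsilon'_j}(u^{\varepsilon'_j})$ on compacts, so a further extraction produces $\beta_{\varepsilon'_j}(u^{\varepsilon'_j})\rightharpoonup\mu\ge 0$ vaguely. For (8), $\nabla u^{\varepsilon'_j}$ is bounded in $L^\infty_{\mathrm{loc}}\subset L^{p(\cdot)}_{\mathrm{loc}}$ and $u^{\varepsilon'_j}\to u$ distributionally, forcing $\nabla u$ as the unique weak limit.

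For (5), on any compact $K\subset\{u>0\}$ one has $u\ge c>0$; hence $u^{\varepsilon'_j}>\varepsilon'_j$ for $j$ large and $\beta_{\varepsilon'_j}(u^{\varepsilon'_j})\equiv 0$ on $K$. The equation reduces to $\Delta_{p_{\varepsilon'_j}(x)}u^{\varepsilon'_j}=f^{\varepsilon'_j}$, so Theorem \ref{regu} provides uniform $C^{1,\alpha}$ estimates and Arzel\`a--Ascoli yields $\nabla u^{\varepsilon'_j}\to\nabla u$ uniformly on compact subsets of $\{u>0\}$; passing to the limit in the weak formulation gives (5).

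The main work is (4), (7). Fix a cutoff $\eta\in C_c^\infty(\Omega)$ with $0\le\eta\le 1$ and test the equation for $u^{\varepsilon'_j}$ against $\eta(u^{\varepsilon'_j}-u)$. The right hand side $-\int\eta(u^{\varepsilon'_j}-u)[\beta_{\varepsilon'_j}(u^{\varepsilon'_j})+f^{\varepsilon'_j}]\,dx$ tends to zero, since $u^{\varepsilon'_j}-u\to 0$ uniformly while $\beta_{\varepsilon'_j}(u^{\varepsilon'_j})$ has uniformly bounded total variation on $\mathrm{supp}(\eta)$ and $f^{\varepsilon'_j}$ is $L^\infty$-bounded. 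Expanding the left hand side and combining the weak convergence $\nabla u^{\varepsilon'_j}\rightharpoonup\nabla u$ with the strong $L^q$ convergence $|\nabla u|^{p_{\varepsilon'_j}(x)-2}\nabla u\to|\nabla u|^{p(x)-2}\nabla u$ (dominated convergence, using the $L^\infty$ bound on $\nabla u$ and uniform $p_{\varepsilon'_j}\to p$), a direct computation yields
\begin{equation*}
\int\eta\bigl(|\nabla u^{\varepsilon'_j}|^{p_{\varepsilon'_j}-2}\nabla u^{\varepsilon'_j}-|\nabla u|^{p_{\varepsilon'_j}-2}\nabla u\bigr)\cdot(\nabla u^{\varepsilon'_j}-\nabla u)\,dx\to 0.
\end{equation*}
Strict monotonicity of $a\mapsto|a|^{q-2}a$ together with the uniform $L^\infty$ gradient bound then forces $\nabla u^{\varepsilon'_j}\to\nabla u$ a.e.\ on $\{\eta>0\}$ along a subsequence, and dominated convergence upgrades this to $|\nabla u^{\varepsilon'_j}|^{p_{\varepsilon'_j}-2}\nabla u^{\varepsilon'_j}\to|\nabla u|^{p(x)-2}\nabla u$ strongly in every $L^q_{\mathrm{loc}}$. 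Passing to the limit in the weak formulation gives (7); (4) is immediate from $\mu\ge 0$. For (9), when $p\equiv p_0$ the quantitative Minty inequality converts the vanishing of the above integral directly into $L^{p_0}_{\mathrm{loc}}$ strong convergence (applied pointwise for $p_0\ge 2$, via H\"older against the $L^\infty$ bound for $1<p_0<2$). The hard part is the Minty step: justifying the vanishing of the right hand side from only measure-level control on $\beta_{\varepsilon'_j}(u^{\varepsilon'_j})$, and handling the mixed $p_{\varepsilon'_j}$ versus $p$ terms in the left hand side expansion without any a priori strong convergence of gradients in the variable exponent setting.
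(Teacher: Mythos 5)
Your proposal is correct, and for the crucial items (4), (7), (9) it takes a genuinely different route from the paper. The paper never proves strong convergence of the gradients in the variable-exponent case: it runs the classical Minty argument with an \emph{arbitrary} Lipschitz comparison function $w$ (deriving $\int_{\Omega'}(\xi-A(\nabla w))(\nabla u-\nabla w)\,dx\ge 0$ for the weak-$*$ limit $\xi$ of the fluxes, then setting $w=u-\lambda v$ and letting $\lambda\to 0^+$), which identifies $\xi=|\nabla u|^{p(x)-2}\nabla u$ and yields (7) and (4) with only weak convergence of $\nabla \uep$; item (9) is then obtained separately, and only for constant $p_0$, by proving convergence of the energies $\int|\nabla \uep|^{p_\ep(x)}\to\int|\nabla u|^{p_0}$ (testing with $\uep\psi$ and $u\psi$) and invoking uniform convexity of $W^{1,p_0}$. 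You instead test directly with $\eta(\uep-u)$ and exploit the quantitative strict-monotonicity inequalities (Remark \ref{desip}) together with the uniform $L^\infty$ gradient bound to conclude a.e.\ convergence of $\nabla\uep$, hence strong $L^q_{\rm loc}$ convergence of gradients and fluxes for every finite $q$ — even for variable $p$. This is shorter, makes (7), (4) and (9) fall out simultaneously, and is in fact \emph{stronger} than what the lemma asserts in items (8)--(9); the price is that it leans on the specific structure of $|\xi|^{p-2}\xi$ (and on the uniform Lipschitz bound to control $(|\nabla\uep|+|\nabla u|)^{p_\ep(x)-2}$ from below when $p_\ep(x)<2$), whereas the paper's Minty argument would survive for more general monotone operators without quantitative coercivity. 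The individual steps you flag as delicate do go through: the right-hand side vanishes because $\|\uep-u\|_{L^\infty({\rm supp}\,\eta)}\to0$ multiplies the uniform $L^1$ bound \eqref{estimL1bep} on $\beta_\ep(\uep)$, and the mixed term $\int\eta\,|\nabla u|^{p_\ep(x)-2}\nabla u\cdot(\nabla\uep-\nabla u)\,dx$ vanishes by pairing the strong $L^1_{\rm loc}$ convergence $|\nabla u|^{p_\ep(x)-2}\nabla u\to|\nabla u|^{p(x)-2}\nabla u$ (dominated convergence) against the weak-$*$ $L^\infty$ convergence $\nabla\uep-\nabla u\rightharpoonup0$; one should only add the routine remark that $\eta(\uep-u)$, being Lipschitz with compact support, is an admissible test function in $W_0^{1,p_\ep(\cdot)}(\Omega)$.
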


\medskip

\begin{proof}
(1) and (8) follow by Theorem \ref{estim-lip}. (2) and (3) are immediate.

In order to prove (5), take $E\subset\subset E'\subset\subset
\{u>0\}$. Then, $u\geq c>0$ in $E'$. Therefore, $u^{\ep'_j}>c/2$
in $E'$ for $\ep_j'$ small. If we take $\ep_j'<c/2$ --as
$\Delta_{p_{\ep_j'}(x)}u^{\ep'_j} = f^{\ep_j'}$ in $\{u^{\ep_j'}>\ep_j'\}$-- we have
that $\Delta_{p_{\ep_j'}(x)}u^{\ep'_j} = f^{\ep_j'}$ in $E'$. Therefore, by Theorem \ref{regu}, $\|u^{\ep_j'}\|_{C^{1,\alpha}(\bar E)}\leq C$.

Thus, for a subsequence, we have
$$\nabla u^{\ep_j'}\rightarrow \nabla u \quad \mbox{ uniformly  in } E.$$

Therefore, $\Delta_{p(x)} u = f$ in $E$.

In order to prove (6), let us take $\Omega'\subset\subset \Omega$,
and $\varphi\in C_0^{\infty}(\Omega)$, $\varphi\ge 0$, with $\varphi=1$ in
$\Omega'$ as a test function in $ P_{\ep_j}(f^{\ep_j}, p_{\ep_j})$. Since  $\|\nabla
u^{\ep_j'}\| \leq C$ in $\Omega'$, there holds that
\begin{equation}\label{estimL1bep}
C(\varphi)\geq \int_{\Omega} \beta_{\ep_j'}(u^{\ep_j'})\varphi\, dx
\geq \int_{\Omega'} \beta_{\ep_j'}(u^{\ep_j'})\, dx.
\end{equation}
Therefore, $ \beta_{\ep_j'}(u^{\ep_j'})$ is bounded in
$L^1_{\rm{loc}}(\Omega)$, so that, there exists a locally finite
measure $\mu$ such that
$$\beta_{\ep_j'}(u^{\ep_j'})\rightharpoonup\mu \quad \mbox{ as
measures. }$$
That is, for every $\varphi \in C_0(\Omega)$,
$$\int_{\Omega}\beta_{\ep_j'}(u^{\ep_j'})\varphi\, dx \rightarrow \int_{\Omega} \varphi\, d\mu.$$

We will divide the reminder of the proof into several steps.

Let $\Omega'\subset\subset \Omega$. We will show that for every
$v\in C_0^{\infty}(\Omega')$ there holds that
\begin{equation}\label{eqlim}
\int_{{\Omega}'} |\nabla u^{\ep_j'}|^{p_{\ep_j'}(x)-2}\nabla u^{\ep_j'} \cdot \nabla
v\, dx \to \int_{{\Omega}'} |\nabla u|^{p(x)-2}\nabla u \cdot \nabla
v\, dx.
\end{equation}

Let us denote, for $\eta\in \R^N$, $A^{\epj}(x, \eta)=|\eta|^{p_{\ep_j}(x)-2}\eta$ and
$A(x, \eta)=|\eta|^{p(x)-2}\eta$.

By Theorem \ref{estim-lip}, we have $|\nabla u ^{\ep_j}|\leq C$ in $\Omega'$.
Therefore for a subsequence $\ep'_j$ we have that there exists
$\xi\in (L^{\infty}(\Omega'))^N$ such that,
\begin{equation}\label{limitelim}
\begin{aligned} &\nabla u^{\ep_j'}
\rightharpoonup \nabla u \quad  &*-\mbox{weakly in }
(L^\infty(\Omega'))^N\\
  &A^{{\ep_j}'}(x, \nabla u^{\ep_j'})
\rightharpoonup\xi \quad  &*-\mbox{weakly in }
(L^\infty(\Omega'))^N\\
&u^{\ep'_j}\to u &\mbox{ uniformly in } \Omega'.
\end{aligned}\end{equation}

For simplicity we call $\ep_j'=\ep$, $A^{\epj}(x, \eta)=A^\ep(\eta)$ and $A(x, \eta)=A(\eta)$.

Step 1. Let us  prove that for any $v\in C(\Omega')\cap
W^{1,\infty}(\Omega')$ there holds that
\begin{equation}\label{primero}
\int_{\Omega'}(\xi-A(\nabla u))\nabla v\,dx=0.
\end{equation}

In fact, as  $A^\ep$ is monotone (i.e $
\big(A^\ep(\eta)-A^\ep(\zeta)\big)\cdot(\eta-\zeta)\geq 0\  \forall
\eta,\zeta\in\R^N$)
 we have that, for any $w\in W^{1,\infty}(\Omega')$,
\begin{equation}\label{monotono1}
I=\int_{\Omega'} \big(A^\ep(\nabla u^{\ep})-A^\ep(\nabla w)\big) (\nabla
u^{\ep}-\nabla w)\, dx \geq 0.
\end{equation}
Therefore, if $\psi\in C_0^\infty(\Omega')$,
\begin{equation}\label{monotono2}\begin{aligned} &-\int_{\Omega'}
\beta_{\ep}(u^{\ep}) u^{\ep}\, dx -\int_{\Omega'}A^\ep(\nabla
u^{\ep}) \nabla w\, dx -\int_{\Omega'} A^\ep(\nabla w) (\nabla
u^{\ep}-\nabla w)\, dx \\&=-\int_{\Omega'} \beta_{\ep}(u^{\ep})
u^{\ep}\, dx -\int_{\Omega'}
A^\ep(\nabla u^{\ep}) \nabla u^{\ep}\, dx +I\\
&=-\int_{\Omega'} \beta_{\ep}(u^{\ep}) u\, dx-\int_{\Omega'}
\beta_{\ep}(u^{\ep}) (u^{\ep}-u)\psi\, dx -\int_{\Omega'}
\beta_{\ep}(u^{\ep}) (u^{\ep}-u) (1-\psi)\, dx \\&\ \ \
-\int_{\Omega'}
A^\ep(\nabla u^{\ep}) \nabla u^{\ep}\, dx+I \\
&\geq-\int_{\Omega'} \beta_{\ep}(u^{\ep}) u\, dx+\int_{\Omega'}
A^\ep(\nabla u^{\ep}) \nabla(u^{\ep}-u) \psi\, dx+\int_{\Omega'}
A^\ep(\nabla u^{\ep}) (u^{\ep}-u)\nabla \psi\, dx\\& \ \  \
-\int_{\Omega'} \beta_{\ep}(u^{\ep}) (u^{\ep}-u) (1-\psi)\, dx
 -\int_{\Omega'}
A^\ep(\nabla u^{\ep}) \nabla u^{\ep}\, dx + \int_{\Omega'} \fep (u^{\ep}-u)\psi\, dx,
\end{aligned}\end{equation}
where in the last inequality we are using \eqref{monotono1} and
equation $P_\ep(\fep, p_\ep)$.

Now, take  $\psi=\psi_j\to \chi_{\Omega'}$\, a.e., with $0\le\psi_j\le 1$. If $\Omega'$ is smooth
we can choose the functions so that $\int |\nabla \psi_j|\, dx \le C \mbox{Per
}\Omega'$. Therefore,
$$
\Big|\int_{\Omega'}A^\ep(\nabla u^{\ep}) (u^{\ep}-u)\nabla \psi_j\, dx
\Big|\leq C\|u^{\ep}-u\|_{L^{\infty}(\Omega')} \int_{\Omega'}
|\nabla \psi_j|\, dx\leq C \|u^{\ep}-u\|_{L^{\infty}(\Omega')}.
$$
Also
$$
\Big|\int_{\Omega'}\fep (u^{\ep}-u)\psi_j\, dx\Big|
\leq C \|u^{\ep}-u\|_{L^{\infty}(\Omega')},
$$
and
$$
\Big|\int_{\Omega'} \beta_{\ep}(u^{\ep}) (u^{\ep}-u)\, dx\Big| \leq C \|u^{\ep}-u\|_{L^{\infty}(\Omega')}.
$$
So that, with this choice of $\psi=\psi_j$ in \eqref{monotono2}, we
obtain

\begin{align*}
&-\int_{\Omega'} \beta_{\ep}(u^{\ep}) u^{\ep}\, dx -\int_{\Omega'}
A^\ep(\nabla u^{\ep}) \nabla w\, dx -\int_{\Omega'} A^\ep(\nabla w)
(\nabla u^{\ep}-\nabla w)\, dx  \\&\geq -\int_{\Omega'}
\beta_{\ep}(u^{\ep}) u\, dx+\int_{\Omega'} A^\ep(\nabla u^{\ep})
\nabla(u^{\ep}-u) \, dx- C \|u^{\ep}-u\|_{L^{\infty}(\Omega')}
-\int_{\Omega'} A^\ep(\nabla u^{\ep}) \nabla u^{\ep}\, dx
\\&=-\int_{\Omega'}
\beta_{\ep}(u^{\ep}) u\, dx-\int_{\Omega'} A^\ep(\nabla u^{\ep})
\nabla u \, dx- C \|u^{\ep}-u\|_{L^{\infty}(\Omega')}
\\&\ge-\int_{\Omega'}
\beta_{\ep}(u^{\ep}) u^{\ep}\, dx-\int_{\Omega'} A^\ep(\nabla u^{\ep})
\nabla u \, dx- C \|u^{\ep}-u\|_{L^{\infty}(\Omega')}
.
\end{align*}

 Therefore, canceling $\int_{\Omega'}\beta_{\ep}(u^{\ep}) u^{\ep}\, dx$ first, and then, letting $\ep\to 0$ we get by using \eqref{limitelim} and (3) that
\begin{align*} &
-\int_{\Omega'} \xi \nabla w\, dx -\int_{\Omega'} A(\nabla w)
(\nabla u-\nabla w)\, dx \geq
-\int_{\Omega'} \xi \nabla u\, dx
\end{align*}
and then,
\begin{equation}\label{ok}
 \int_{\Omega'} (\xi-A(\nabla w)) (\nabla u-\nabla w)\,
dx \geq 0.
\end{equation}
Take now $w=u-\lambda v$ with $v\in C(\Omega')\cap
W^{1,\infty}(\Omega')$ and $\lambda>0$. Dividing by $\lambda$ and
taking $\lambda\to 0^+$ in \eqref{ok}, we obtain
$$\int_{\Omega'} (\xi-A(\nabla u)) \nabla v\, dx
\geq 0.
$$
Replacing $v$ by $-v$ we obtain \eqref{primero}. Then, \eqref{eqlim} holds which implies (7) and (4).

In order to prove (9) let us now assume that $p(x)\equiv p_0$, with $p_0$ a constant.
Then  we now have $A(x,\eta)=A(\eta)=|\eta|^{p_0-2}\eta$.

 Step 2. Let us prove that
 \begin{equation}\label{step11}
 \int_{\Omega'}A^{\ep}(\nabla\uep)\nabla \uep\to\int_{\Omega'}A(\nabla u)\nabla u.
 \end{equation}

 \medskip

By passing to the limit in the equation
\begin{equation}\label{Auep}
0=\int_{\Omega'} A^{\ep}(\nabla u^{\ep}) \nabla
\phi+\int_{\Omega'} \beta_{\ep}(u^\ep)  \phi+ \int_{\Omega'} \fep
\phi \, dx,
\end{equation}
 we have, by Step 1, that for every
 $\phi\in C_0(\Omega')\cap W^{1,\infty}(\Omega')$,
\begin{equation}\label{Auu}
 0=\int_{\Omega'}
A(\nabla u) \nabla \phi+\int_{\Omega'}  \phi\, d\mu+ \int_{\Omega'} f\phi \, dx.
\end{equation}

 On the other hand, taking $\phi=u^{\ep} \psi$ in \eqref{Auep} with $\psi \in C_0^{\infty}(\Omega')$ we have that
$$0=\int_{\Omega'} A^{\ep}(\nabla u^{\ep}) \nabla u^{\ep}
\psi\, dx + \int_{\Omega'} A^{\ep}(\nabla u^{\ep}) u^{\ep} \nabla\psi\,
dx+\int_{\Omega'} \beta_{\ep}(u^{\ep})  u^{\ep}\psi\, dx +
\int_{\Omega'} \fep u^{\ep}\psi\, dx.$$
Using that $A^{\ep}(\nabla u^{\ep}) u^{\ep} \nabla\psi\, \to A(\nabla u) u \nabla\psi$ a.e. in $\Omega'$, with
$|A^{\ep}(\nabla u^{\ep}) u^{\ep} \nabla\psi|\le C$ in $\Omega'$, we get
\begin{align*}
\int_{\Omega'} A^{\ep}(\nabla u^{\ep}) u^{\ep} \nabla\psi\, dx & \to
\int_{\Omega'}
A(\nabla u) u \nabla\psi\, dx \\
 \int_{\Omega'} \beta_{\ep}( u^{\ep}) u^{\ep}
\psi\, dx &\to \int_{\Omega'}  u \psi d\mu.
\end{align*}
Then  we obtain $$0=\lim_{\ep\to 0} \Big(\int_{\Omega'} A^{\ep}(\nabla
u^{\ep}) \nabla u^{\ep} \psi\, dx \Big)+ \int_{\Omega'} A(\nabla
u) u \nabla\psi\, dx+
 \int_{\Omega'} u \psi d\mu+\int_{\Omega'} f u\psi\, dx. $$
Now taking, $\phi=u \psi$ in \eqref{Auu} we have $$
0=\int_{\Omega'} A(\nabla u) \nabla u \psi\, dx + \int_{\Omega'}
A(\nabla u) u \nabla\psi\, dx+
 \int_{\Omega'} u \psi\, d\mu+\int_{\Omega'} f u\psi\, dx.
$$ Therefore, $$\lim_{\ep\to 0} \int_{\Omega'} A^{\ep}(\nabla u^{\ep})
\nabla u^{\ep} \psi\, dx= \int_{\Omega'} A(\nabla u) \nabla u
\psi\, dx. $$ Then,
\begin{align*}&\left|\int_{\Omega'}
(A^{\ep}(\nabla u^{\ep}) \nabla u^{\ep}-A(\nabla u) \nabla u )\,
dx\right|\\& \leq \left|\int_{\Omega'} (A^{\ep}(\nabla u^{\ep}) \nabla
u^{\ep}-A(\nabla u) \nabla u )\psi\, dx\right|+
\left|\int_{\Omega'} (A^{\ep}(\nabla u^{\ep}) \nabla u^{\ep} )(1-\psi)\,
dx\right|\\&\ \ +\left|\int_{\Omega'} A(\nabla u) \nabla u
(1-\psi)\, dx\right|
\\& \leq \left|\int_{\Omega'} (A^{\ep}(\nabla u^{\ep}) \nabla
u^{\ep}-A(\nabla u) \nabla u )\psi\, dx\right|+ C\int_{\Omega'}
|1-\psi|\, dx
\end{align*}
so that taking $\ep\to 0$ and then $\psi\to 1$ a.e. with
$0\leq\psi\leq 1$ we obtain \eqref{step11}. This is,
\begin{equation}\label{step11b}
\int_{\Omega'} |\nabla u^{\ep}|^{p_{\ep}(x)}dx\rightarrow
\int_{\Omega'} |\nabla u|^{p_0}dx.
\end{equation}

Step 3.  Let us prove that
\begin{equation}\label{Guepau}\int_{\Omega'} |\nabla
u^{\ep}|^{p_0}\, dx \rightarrow \int_{\Omega'} |\nabla u|^{p_0}\,
dx.\end{equation}

We first observe that

\begin{equation}\label{acerco}\Big|\int_{\Omega'} |\nabla
u^{\ep}|^{p_{\ep}(x)}\, dx -\int_{\Omega'} |\nabla \uep|^{p_0}\,
dx\Big|\le
\int_{\Omega'} \big| |\nabla
u^{\ep}|^{p_{\ep}(x)} - |\nabla \uep|^{p_0}\big|\,
dx \rightarrow 0.\end{equation}
Here we have used that $\big| |\nabla
u^{\ep}|^{p_{\ep}(x)} - |\nabla \uep|^{p_0}\big|\to 0$ a.e. in $\Omega'$ with
$\big| |\nabla u^{\ep}|^{p_{\ep}(x)} - |\nabla \uep|^{p_0}\big|\le C$ in $\Omega'$.

Thus,   \eqref{Guepau} follows from  \eqref{step11b} and \eqref{acerco}.

 Step 4. End of the proof of (9).

Since $\uep\rightharpoonup u$ weakly in
$W^{1,p_0}_{\rm{loc}}(\Omega)$ and
$||\uep||_{W^{1,p_0}(\Omega')}\to ||u||_{W^{1,p_0}(\Omega')}$, for
every $\Omega'\subset\subset\Omega$, it follows that $\uep\to u$
in $W^{1,p_0}_{\rm{loc}}(\Omega)$. In particular,
$\nabla\uep\to\nabla u$ in $L^{p_0}_{\rm{loc}}(\Omega)$. This
completes the proof of the lemma.
 \end{proof}

\medskip

\begin{lemm}\label{radpos}
Let $v$ be a continuous nonnegative function in a domain
$\Omega\subset\Bbb R^{N}$, $v\in W^{1,p(\cdot)}(\Omega)$, such that $\Delta_{p(x)} v = g$ in
$\{v>0\}$ with $g\in L^\infty(\Omega)$. Then $\lambda_v:=
\Delta_{p(x)} v - g \chi_{\{v
>0\}}$ is a nonnegative Radon measure with support on
$\Omega\cap\partial\{v>0\}$.
\end{lemm}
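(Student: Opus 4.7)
The plan is to prove separately that (i) $\lambda_v$ is a nonnegative distribution on $\Omega$, and (ii) its support lies in $\Omega\cap\partial\{v>0\}$. Once (i) is established, the standard representation of nonnegative distributions promotes $\lambda_v$ to a nonnegative Radon measure. Statement (ii) is easy: on the open set $\{v>0\}$ the hypothesis gives $\lambda_v=0$, and on the open set $U:=\mathrm{int}\{v=0\}$ one has $v\equiv0$, hence $\nabla v=0$ classically on $U$ and $\chi_{\{v>0\}}=0$ on $U$, so both terms defining $\langle\lambda_v,\varphi\rangle$ vanish for every $\varphi\in C_0^\infty(U)$. Since $\Omega=\{v>0\}\cup U\cup(\Omega\cap\partial\{v>0\})$, this gives the support claim.

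The heart of the argument is nonnegativity. Fix a test function $\varphi\in C_0^\infty(\Omega)$, $\varphi\ge0$, and introduce for $\delta>0$ the Lipschitz cutoff
\begin{equation*}
G_\delta(s)=\begin{cases}0,&s\le\delta,\\ (s-\delta)/\delta,&\delta<s<2\delta,\\ 1,&s\ge 2\delta,\end{cases}
\end{equation*}
which satisfies $G_\delta'\ge0$. Put $\eta_\delta:=\varphi\,G_\delta(v)\in W^{1,p(\cdot)}(\Omega)$. Since $v$ is continuous and $\varphi$ has compact support, the support of $\eta_\delta$ is a compact subset of $\{v\ge\delta\}$, hence compactly contained in the open set $\{v>0\}$; it follows that $\eta_\delta\in W_0^{1,p(\cdot)}(\{v>0\})$ and is admissible in the weak formulation of $\Delta_{p(x)}v=g$ on $\{v>0\}$. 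Expanding $\nabla\eta_\delta=G_\delta(v)\nabla\varphi+\varphi G_\delta'(v)\nabla v$ and rearranging yields
\begin{equation*}
\int_\Omega|\nabla v|^{p(x)-2}\nabla v\cdot\nabla\varphi\,G_\delta(v)\,dx+\int_\Omega g\,\varphi\,G_\delta(v)\,dx=-\int_\Omega\varphi\,G_\delta'(v)|\nabla v|^{p(x)}\,dx\le 0.
\end{equation*}

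I then pass to the limit $\delta\to0$ via dominated convergence, with dominants $|\nabla v|^{p(x)-1}|\nabla\varphi|\in L^1_{\mathrm{loc}}(\Omega)$ (from $v\in W^{1,p(\cdot)}$ and H\"older with the conjugate exponent) and $\|g\|_{L^\infty}|\varphi|\in L^1(\Omega)$; combined with the fact that $\nabla v=0$ a.e.\ on $\{v=0\}$, this gives
\begin{equation*}
\int_\Omega|\nabla v|^{p(x)-2}\nabla v\cdot\nabla\varphi\,dx+\int_\Omega g\chi_{\{v>0\}}\varphi\,dx\le 0,
\end{equation*}
i.e.\ $\langle\lambda_v,\varphi\rangle\ge0$. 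The only delicate point I anticipate is the admissibility of $\eta_\delta$ in the weak equation on $\{v>0\}$; the cutoff is designed to vanish on $(-\infty,\delta]$ rather than only on $(-\infty,0]$ precisely so that $\mathrm{supp}(\eta_\delta)$ is compactly contained in $\{v>0\}$, making admissibility automatic.
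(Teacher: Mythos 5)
Your proof is correct and is essentially the argument the paper relies on: the paper's proof of this lemma is only a citation to \cite{LW3}, Lemma 2.1 (the case $p(x)\equiv 2$), whose proof is precisely this truncation of $v$ at level $\delta$, exploitation of the sign of $\int\varphi\,G_\delta'(v)|\nabla v|^{p(x)}\,dx$, and passage to the limit using $\nabla v=0$ a.e.\ on $\{v=0\}$. The only point worth recording is the one you already flag: the admissibility of $\eta_\delta$ in $W_0^{1,p(\cdot)}(\{v>0\})$ uses that $\eta_\delta$ is compactly supported there together with the regularity of $p(\cdot)$ assumed throughout the paper, so it is harmless.
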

\begin{proof} The proof follows as in the case $p(x)\equiv 2$, that was done in \cite{LW3}, Lemma 2.1.
\end{proof}

\medskip

\begin{coro}
\label{nuevaclw1-Proposition 3.1}
 Let $\uepj$ be a family of  solutions to $\pepj(\fepj, p_{\epj})$ in a domain $\Omega\subset
\Bbb R^{N}$  with $1<p_{\min}\le p_{\epj}(x)\le p_{\max}<\infty$ and
$p_{\epj}(x)$  Lipschitz continuous with $\|\nabla
p_{\epj}\|_{L^{\infty}}\leq L$, for some $L>0$.
Assume that $\uepj\to u$ uniformly on compact subsets of
$\Omega$, $\fepj\rightharpoonup f$ $*-$weakly in $L^\infty(\Omega)$, $p_{\epj}\to p$ uniformly on compact subsets of $\Omega$ and
$\ep_j\to 0$. Then,
\begin{equation*}
\Delta_{p(x)} u - f \chi_{\{u>0\}}=\lambda_u \quad\mbox{ in
}\Omega,
\end{equation*}
with $\lambda_u$ a nonnegative Radon measure supported  on the
free boundary $\Gamma=\Omega\cap\partial\{u>0\}$.
\end{coro}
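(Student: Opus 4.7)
The plan is to reduce this corollary to a direct application of Lemma \ref{radpos}, with the role of $v$ played by the limit function $u$ and the role of $g$ played by $f$. The preceding Lemma \ref{clw1-Lemma 3.1} has essentially done all the work: I only need to verify that its output provides exactly the hypotheses of Lemma \ref{radpos}.

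First, from part (1) of Lemma \ref{clw1-Lemma 3.1} (applied along the subsequence $\ep_j$, which we already have) the limit $u$ is in $\mbox{Lip}_{\rm loc}(\Omega)$, so in particular $u$ is continuous in $\Omega$, and since each $u^{\epj}\ge 0$ the uniform convergence forces $u\ge 0$. Next, the uniform Lipschitz bound together with $p(x)\le p_{\max}<\infty$ implies that $u\in W^{1,p(\cdot)}_{\rm loc}(\Omega)$, which is the regularity Lemma \ref{radpos} requires. From the weak-$*$ convergence of $\fepj$ in $L^\infty$ we get $f\in L^\infty(\Omega)$, and from part (5) of Lemma \ref{clw1-Lemma 3.1} we have $\Delta_{p(x)}u=f$ in the open set $\{u>0\}$.

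With these properties in hand, Lemma \ref{radpos} applies verbatim and produces a nonnegative Radon measure
\[
\lambda_u:=\Delta_{p(x)}u - f\chi_{\{u>0\}}
\]
whose support is contained in $\Omega\cap\partial\{u>0\}=\Gamma$. This is precisely the statement of the corollary.

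Since the corollary is essentially a bookkeeping step that packages the conclusions of Lemma \ref{clw1-Lemma 3.1} into the language of Lemma \ref{radpos}, there is no substantive obstacle; the only mild point to check is that $u\in W^{1,p(\cdot)}_{\rm loc}(\Omega)$ (so that the distributional $p(x)$-Laplacian of $u$ makes sense), which follows immediately from the uniform Lipschitz regularity provided by Theorem \ref{estim-lip}.
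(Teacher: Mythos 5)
Your proof is correct and follows exactly the paper's route: the paper likewise disposes of this corollary in one line as an immediate consequence of Lemma \ref{clw1-Lemma 3.1} (which supplies the continuity, nonnegativity, Sobolev regularity and the equation $\Delta_{p(x)}u=f$ in $\{u>0\}$) combined with Lemma \ref{radpos}. Your additional verification of the hypotheses of Lemma \ref{radpos} is accurate and merely makes explicit what the paper leaves implicit.
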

\begin{proof} It is an immediate consequence of Lemma \ref{clw1-Lemma 3.1} and Lemma \ref{radpos}.
\end{proof}

\medskip

\begin{lemm}
\label{clw1-Lemma 3.2}
 Let $\uepj$ be a family of  solutions to $\pepj(\fepj, p_{\epj})$ in a domain $\Omega\subset
\Bbb R^{N}$ with $1<p_{\min}\le p_{\epj}(x)\le p_{\max}<\infty$ and
$p_{\epj}(x)$  Lipschitz continuous with $\|\nabla
p_{\epj}\|_{L^{\infty}}\leq L$, for some $L>0$.
Assume that $\uepj\to u$ uniformly on compact subsets of
$\Omega$, $\fepj\rightharpoonup f$ $*-$weakly in $L^\infty(\Omega)$, $p_{\epj}\to p$ uniformly on compact subsets of $\Omega$ and
$\ep_j\to 0$.

Let  $x_0\in \Omega$ and $x_n\in\Omega$  be such that $u(x_0)=0$, $u(x_n)=0$ and $x_n\to x_0$ as $n\to\infty$.
Let $\Ln\to 0$, $u_{\Ln}(x)=\frac1{\Ln} u(x_n+\Ln x)$ and
$(\uepj)_{\Ln}(x)=\frac1{\Ln} \uepj(x_n+\Ln x)$.
 Assume that   $u_{\lambda_n}\to U$ as $n\to \infty$
uniformly on compact sets of $\Bbb R^{N}$. Then, there exists
$j(n)\to+\infty$ such that for every $j_n\ge j(n)$ there holds
that $\frac\epjn{\lambda_n}\to 0$ and
\medskip
\item {1)} $\uepjnln\to U$ uniformly on compact sets of $\Bbb
R^{N}$,
\smallskip
\item{2)} $\nabla\uepjnln\to\nabla U\mbox{ in }L^{p_0}_{\rm{loc}}(\Bbb
R^{N})$ with $p_0=p(x_0)$.
\end{lemm}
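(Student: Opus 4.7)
The plan is to pick $j(n)$ by a diagonal argument so that both $\frac{\epjn}{\ln}\to 0$ and $(\uepjn)_{\ln}$ stays close to $u_{\ln}$, and then to view the rescaled functions $v^n:=(\uepjn)_{\ln}$ as a new family of solutions to a singular perturbation problem whose rescaled data fall under the scope of Lemma \ref{clw1-Lemma 3.1}, with the extra crucial feature that the limit exponent is the \emph{constant} $p_0:=p(x_0)$.

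First, since $\uepj\to u$ locally uniformly and $x_n\to x_0\in\Omega$, for all $n$ large $\overline{B_{n\ln}(x_n)}$ is contained in a fixed compact $K\subset\Omega$. Using this and the fact that $\ep_j\to 0$, we may choose $j(n)\to\infty$ such that for every $j_n\ge j(n)$,
$$\epjn\le\frac{\ln}{n}\qquad\text{and}\qquad \sup_{y\in \overline{B_{n\ln}(x_n)}}|\uepjn(y)-u(y)|\le \frac{\ln}{n}.$$
The first bound gives $\epjn/\ln\to 0$; the second, after rescaling, gives $\sup_{x\in B_n}|v^n(x)-u_{\ln}(x)|\le 1/n$, which combined with the assumption $u_{\ln}\to U$ locally uniformly proves (1).

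For (2), a direct computation shows that $v^n$ satisfies
$$\Delta_{\tilde p_n(x)}v^n=\beta_{\sigma_n}(v^n)+\tilde f_n\quad\text{in } B_R,$$
for every fixed $R$ and $n$ large, where $\sigma_n:=\epjn/\ln\to 0$, $\tilde p_n(x):=p_{\epjn}(x_n+\ln x)$ and $\tilde f_n(x):=\ln f^{\epjn}(x_n+\ln x)$. The family $\{\tilde p_n\}$ is uniformly Lipschitz (with constants $\ln L\to 0$, hence bounded by $L$) and satisfies $1<p_{\min}\le \tilde p_n\le p_{\max}<\infty$; the family $\{\tilde f_n\}$ is uniformly bounded on $B_R$ (in fact $\|\tilde f_n\|_\infty\le \ln L_2\to 0$); and $\{v^n\}$ is uniformly bounded on $B_R$ by (1) together with the local boundedness of $U$. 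Moreover, since $p_{\epjn}\to p$ uniformly on $K$, $p$ is continuous, and $x_n+\ln x\to x_0$ uniformly on $B_R$, we have $\tilde p_n\to p_0$ uniformly on $B_R$.

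We may therefore apply Lemma \ref{clw1-Lemma 3.1} to the family $\{v^n\}$ regarded as solutions on $B_R$ with singular perturbation parameter $\sigma_n\to 0$ and limit exponent equal to the \emph{constant} $p_0$. Any subsequence of $\{v^n\}$ admits a further subsequence for which parts (1)--(9) of Lemma \ref{clw1-Lemma 3.1} hold; by (1) of the present lemma, the uniform limit on compact sets is forced to be $U$, so the whole sequence $\{v^n\}$ converges and inherits every conclusion of Lemma \ref{clw1-Lemma 3.1}. In particular, since the limit exponent is constant, part (9) of Lemma \ref{clw1-Lemma 3.1} gives $\nabla v^n\to \nabla U$ in $L^{p_0}(B_R)$. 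As $R$ is arbitrary, this establishes (2). The main technical obstacle is ensuring the applicability of Lemma \ref{clw1-Lemma 3.1} to the rescaled sequence---in particular the uniform $L^\infty$ bound on $v^n$ on each $B_R$---which is overcome by working on one ball at a time and using (1) together with local boundedness of $U$.
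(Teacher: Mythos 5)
Your argument is correct and is essentially the paper's: the paper proves this lemma by simply invoking Lemma \ref{clw1-Lemma 3.1} ``exactly as Lemma 3.2 in \cite{CLW1}'', and that argument is precisely the one you carry out --- choose $j(n)$ diagonally so that $\varepsilon_{j_n}/\lambda_n\to0$ and the rescaled $u^{\varepsilon_{j_n}}$ tracks $u_{\lambda_n}$, then observe that the rescaled functions solve $P_{\sigma_n}(\tilde f_n,\tilde p_n)$ with $\sigma_n\to0$, $\tilde f_n\to0$, $\tilde p_n\to p_0$ constant, and apply Lemma \ref{clw1-Lemma 3.1}, in particular part (9), plus the subsequence-of-subsequences argument. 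One small repair: $\overline{B_{n\lambda_n}(x_n)}$ need not lie in a fixed compact subset of $\Omega$, since nothing forces $n\lambda_n\to0$; replace the radius $n\lambda_n$ by $\min\bigl\{n\lambda_n,\tfrac12\,\mathrm{dist}(x_n,\partial\Omega)\bigr\}$ (these radii, divided by $\lambda_n$, still tend to infinity), and the rest of your argument goes through unchanged.
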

\begin{proof} The result follows from Lemma \ref{clw1-Lemma 3.1} exactly as Lemma 3.2 in \cite{CLW1}.
\end{proof}

\end{section}

\begin{section}{Basic Limits}

In this section we analyze some  limits that are crucial in the understanding of general limits.

\medskip

We start with the following lemma

\begin{lemm}\label{noweiss} Let $\uepj$, $\fepj$, $p_{\epj}$, $\ep_j$, $u$, $f$ and $p$ be as in Lemma \ref{clw1-Lemma 3.2}.

 Then there exists $\chi\in
L^1_{\rm{loc}}(\Omega)$ such that, for a subsequence,
$\bepj(\uepj)\to\chi$ in $L^1_{\rm{loc}}(\Omega)$, with
$\chi\equiv M$ in $\{u>0\}$ and
$\chi(x)\in\{0,M\}$ a.e. in $\Omega$. If, in addition,
$\fepj\rightharpoonup 0$ in $\{u\equiv 0\}^\circ$, there holds that  $\chi\equiv M$ or
$\chi\equiv 0$ on every connected component of $\{u\equiv 0\}^\circ$.
\end{lemm}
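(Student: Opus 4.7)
My plan is to extract a strongly $L^1_{\rm loc}$-convergent subsequence of $\bepj(\uepj)$ via $W^{1,1}$-compactness, identify $\chi\equiv M$ on $\{u>0\}$ by a direct pointwise argument, show $\chi\in\{0,M\}$ a.e.\ by a sharp measure estimate on the ``transition layer'' of $\uepj$, and finally use the limit equation to deduce the constancy of $\chi$ on each connected component of $\{u\equiv 0\}^\circ$.

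\medskip

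\emph{Step 1: compactness and identification on $\{u>0\}$.} The functions $\bepj(\uepj)$ are bounded by $M$ in $L^\infty(\Omega)$, and by the chain rule $\nabla\bepj(\uepj)=\beta_{\ep_j}(\uepj)\nabla\uepj$. The uniform Lipschitz bound of Theorem~\ref{estim-lip}, together with the $L^1_{\rm loc}$-estimate \eqref{estimL1bep} for $\beta_{\ep_j}(\uepj)$, yields a uniform bound for $\bepj(\uepj)$ in $W^{1,1}_{\rm loc}(\Omega)$. By Rellich--Kondrachov, a subsequence converges in $L^1_{\rm loc}(\Omega)$ and a.e.\ to some $\chi\in L^\infty(\Omega)$ with $0\le\chi\le M$. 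On any compact $K\subset\{u>0\}$, $u\ge\delta>0$ and $\uepj\to u$ uniformly imply $\uepj\ge\delta/2>\ep_j$ on $K$ for $j$ large, so $\bepj(\uepj)\equiv M$ on $K$, and hence $\chi\equiv M$ on $\{u>0\}$.

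\medskip

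\emph{Step 2: $\chi\in\{0,M\}$ almost everywhere.} Define $\Phi(t):=\int_0^t\beta(\sigma)\,d\sigma$ for $t\in[0,1]$, so that $\Phi$ is a strictly increasing bijection $[0,1]\to[0,M]$ and $\bepj(s)=\Phi(s/\ep_j)$ whenever $0\le s\le\ep_j$. Fix $\eta\in(0,1/2)$ and set $c_\eta:=\min_{[\eta,1-\eta]}\beta>0$. On the transition core
\[
T_\eta^j:=\{\,\eta\ep_j\le\uepj\le(1-\eta)\ep_j\,\}
\]
one has $\beta_{\ep_j}(\uepj)\ge c_\eta/\ep_j$, so for every compact $K\subset\Omega$
\[
|T_\eta^j\cap K|\le\frac{\ep_j}{c_\eta}\int_K\beta_{\ep_j}(\uepj)\,dx\le\frac{C(K)\,\ep_j}{c_\eta}\longrightarrow 0.
\]
By the monotonicity of $\Phi$, $T_\eta^j=\{\bepj(\uepj)\in[\Phi(\eta),\Phi(1-\eta)]\}$. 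Using a.e.\ convergence along a further subsequence, for each $\delta>0$ the set $\{\chi\in[\Phi(\eta)+\delta,\Phi(1-\eta)-\delta]\}$ is contained, up to a null set, in $\liminf_j T_\eta^j$ and therefore has Lebesgue measure zero. Since $\Phi(\eta)\to 0$ and $\Phi(1-\eta)\to M$ as $\eta\to 0^+$, a countable exhaustion of $(0,M)$ by such intervals yields $|\{\chi\in(0,M)\}|=0$.

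\medskip

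\emph{Step 3: constancy on connected components.} Suppose in addition that $\fepj\rightharpoonup 0$ in $\{u\equiv 0\}^\circ$. For $\varphi\in C_0^\infty(\{u\equiv 0\}^\circ)$ we have $\nabla u\equiv 0$ on $\mathrm{supp}\,\varphi$; testing the limit equation (item (7) of Lemma~\ref{clw1-Lemma 3.1}) against $\varphi$ annihilates both the $|\nabla u|^{p(x)-2}\nabla u\cdot\nabla\varphi$ term and the $f$-term, giving $\int\varphi\,d\mu=0$. Thus $\mu\equiv 0$ on $\{u\equiv 0\}^\circ$. The pointwise bound $|\nabla\bepj(\uepj)|\le C\beta_{\ep_j}(\uepj)$ then implies $\nabla\bepj(\uepj)\to 0$ in $L^1_{\rm loc}(\{u\equiv 0\}^\circ)$; passing to the distributional limit and using $\bepj(\uepj)\to\chi$ in $L^1_{\rm loc}$ yields $\nabla\chi=0$ distributionally on $\{u\equiv 0\}^\circ$. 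Combined with Step~2, $\chi\equiv 0$ or $\chi\equiv M$ on every connected component.

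\medskip

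The principal obstacle is the transition-core estimate of Step~2: its quantitative sharpness hinges on the strict positivity of $\beta$ on $(0,1)$ and on the $L^1$-uniform control of $\beta_{\ep_j}(\uepj)$, both ultimately consequences of the uniform Lipschitz regularity proved in Section~2. The variable exponent $p_\ep(x)$ plays only a minor role here, entering through the bounds on $|\nabla\uepj|^{p_\ep(x)-1}$ used to derive \eqref{estimL1bep}.
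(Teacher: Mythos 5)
Your argument is correct and follows essentially the same route as the paper: a $W^{1,1}_{\rm loc}$ bound from \eqref{estgradBe} plus compactness for the $L^1_{\rm loc}$ limit, and the vanishing of the measure $\mu$ on $\{u\equiv 0\}^\circ$ (via item (7) of Lemma \ref{clw1-Lemma 3.1} with $\nabla u=0$ and $f=0$ there) to force $\nabla\chi=0$ and hence constancy on components. The only difference is that your Step 2 writes out in full the transition-layer measure estimate that the paper delegates to the constant-exponent case in \cite{LW3}, Lemma 3.1; your version of that step is the standard argument and is carried out correctly.
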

\begin{proof} We first observe that, for every $K\subset\subset\Omega$, there holds
\begin{equation}
\label{estgradBe}
 \int_{K}|\nabla
B_{{\epj}}(u^{\epj})|=\int_{K}\beta_{{\epj}}(u^{\epj})|\nabla
u^{\epj}| \le C_K\int_{K} \beta_{{\epj}}(u^{\epj}),
\end{equation}
where the last term is bounded by a constant $C'_K$ due to
estimate (\ref{estimL1bep}).

Since $0\le B_{{\epj}}(u^{\epj})\le M$, then, there exists
$\chi\in L^1_{\rm{loc}}(\Omega)$ such that, for a subsequence,
$\bepj(\uepj)\to\chi$ in $L^1_{\rm{loc}}(\Omega)$.

Proceeding as in the case $p(x)\equiv 2$ (see \cite{LW3}, Lemma 3.1) we deduce that
$\chi\equiv M$ in $\{u>0\}$ and $\chi(x)\in\{0,M\}$ a.e. in $\Omega$.

Finally, if $\fepj\rightharpoonup 0$ in $\{u\equiv 0\}^\circ$, we take
$K\subset\subset\{u\equiv 0\}^\circ$ in (\ref{estgradBe}) and we
observe that the last term there goes to zero since, by (6) and (7) in Lemma \ref{clw1-Lemma 3.1},
$\beta_{\epj}(u^{\epj})\rightharpoonup \mu$ locally as measures, with $\mu=0$ in $K$.
Thus the result follows.
\end{proof}

\medskip

\begin{prop}
\label{clw1-Proposition 5.2} Let $\uepj$ be solutions to $\pepj(\fepj, p_{\epj})$ in a domain $\Omega\subset
\Bbb R^{N}$ with $1<p_{\min}\le p_{\epj}(x)\le p_{\max}<\infty$ and
$p_{\epj}(x)$  Lipschitz continuous with $\|\nabla p_{\epj}\|_{L^{\infty}}\to 0$.
Let $x_0\in \Omega$ and suppose $\uepj$ converge  to
$u_0=\alpha(x-x_0)_1^+$ uniformly on compact subsets of  $\Omega$,
with $\alpha\in\Bbb R $, $\fepj\rightharpoonup0$ $*-$weakly in
$L^\infty(\Omega)$, $p_{\epj}\to p_0$ uniformly on compact subsets of
$\Omega$, with $p_0\in\Bbb R$,
 and  $\ep_j\to 0$.
Then
$$
\alpha =0\quad\mbox{or} \quad
\alpha=\Big(\frac{p_0}{p_0-1}\,M\Big)^{1/p_0},
$$
with $\int \beta(s)\, ds=M$.
\end{prop}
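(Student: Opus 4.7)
The plan is to derive a Pohozaev--Rellich identity for each $\uepj$, pass it to the limit $\epj\to 0$, and evaluate on $u_0=\alpha x_1^+$ to pin down $\alpha$. After translating so that $x_0=0$, I note that $\uepj\ge 0$ forces $\alpha\ge 0$; the case $\alpha=0$ is already in the conclusion, so I assume $\alpha>0$. The key step is to test the equation $\Delta_{p_{\epj}(x)}\uepj=\beta_{\epj}(\uepj)+\fepj$ against $\zeta\,\partial_1\uepj$ for $\zeta\in C_c^\infty(\Omega)$. The right-hand side rewrites via $\beta_{\epj}(\uepj)\,\partial_1\uepj=\partial_1\bepj(\uepj)$; the left-hand side uses the Pohozaev identity $|\nabla w|^{p-2}\nabla w\cdot\partial_1\nabla w=\tfrac{1}{p}\partial_1|\nabla w|^p$ for fixed $p$ (producing error terms weighted by $\partial_1 p_{\epj}$ when $p$ is variable). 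Shifting all derivatives onto $\zeta$ by integration by parts yields
\begin{align*}
-\int|\nabla\uepj|^{p_{\epj}-2}\partial_1\uepj\,\nabla\uepj\cdot\nabla\zeta\,dx &+ \int\frac{\partial_1\zeta}{p_{\epj}}|\nabla\uepj|^{p_{\epj}}\,dx \\
&= -\int\partial_1\zeta\,\bepj(\uepj)\,dx + \int\zeta\,\fepj\,\partial_1\uepj\,dx + R_j,
\end{align*}
where $R_j\to 0$ since $|\nabla\uepj|$ is uniformly bounded by Theorem~\ref{estim-lip} and $\|\nabla p_{\epj}\|_\infty\to 0$.

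Passing to the limit $\epj\to 0$ uses Lemma~\ref{clw1-Lemma 3.1}(9), which applies precisely because $p_{\epj}\to p_0$ is constant, yielding $\nabla\uepj\to\nabla u_0$ in $L^{p_0}_{\rm loc}$ and hence $|\nabla\uepj|^{p_{\epj}-2}\nabla\uepj\to|\nabla u_0|^{p_0-2}\nabla u_0$ strongly in $L^{p_0/(p_0-1)}_{\rm loc}$. The $\fepj$ term vanishes because $\fepj\rightharpoonup 0$ weakly$^*$ with $\zeta\,\partial_1\uepj$ bounded. Lemma~\ref{noweiss} gives $\bepj(\uepj)\to\chi$ in $L^1_{\rm loc}$ with $\chi\equiv M$ on $\{u_0>0\}=\{x_1>0\}$ and $\chi\equiv c$ on the single component $\{x_1<0\}$ of $\{u_0=0\}^\circ$, for some $c\in\{0,M\}$. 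Inserting $u_0=\alpha x_1^+$ (so $\nabla u_0=\alpha e_1\chi_{\{x_1>0\}}$) and using that the compact support of $\zeta$ forces $\int_{\{x_1<0\}}\partial_1\zeta\,dx=-\int_{\{x_1>0\}}\partial_1\zeta\,dx$, so that $\int\partial_1\zeta\,\chi\,dx=(M-c)\int_{\{x_1>0\}}\partial_1\zeta\,dx$, the limit identity collapses to
\[
\alpha^{p_0}\,\frac{p_0-1}{p_0} \;=\; M - c.
\]
Choosing a slab-type test $\zeta(x)=\eta(x')\,\chi_1(x_1)$ with $\eta\in C_c^\infty(\mathbb{R}^{N-1})$, $\eta\ge 0$ not identically zero, and $\chi_1\in C_c^\infty(\mathbb{R})$ with $\chi_1(0)=1$ gives $\int_{\{x_1>0\}}\partial_1\zeta\,dx=-\int\eta\ne 0$, so this equation really constrains $\alpha$. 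Since $\alpha>0$, the alternative $c=M$ would force $\alpha=0$, a contradiction; hence $c=0$ and $\alpha=\bigl(\tfrac{p_0 M}{p_0-1}\bigr)^{1/p_0}$, as required.

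The main obstacle is the rigorous justification of $\zeta\,\partial_1\uepj$ as an admissible test function, since a priori $\uepj$ is only $C^{1,\alpha}_{\rm loc}$ by Theorem~\ref{regu}, not $W^{2,p_{\epj}(\cdot)}_{\rm loc}$. This is resolved by a standard difference-quotient device: test first with $\zeta\,(\uepj(\cdot+he_1)-\uepj)/h$, which is a legitimate element of $W^{1,p_{\epj}(\cdot)}_0$, and then send $h\to 0$ using the uniform $C^{1,\alpha}$ bound. A secondary technical point is the convergence $R_j\to 0$: the worst term contains $\partial_1 p_{\epj}\,|\nabla\uepj|^{p_{\epj}}\log|\nabla\uepj|$, which is controlled because $|\nabla\uepj|^{p_{\epj}}$ is uniformly bounded, $\log|\nabla\uepj|$ is locally integrable, and the whole expression is proportional to $\|\nabla p_{\epj}\|_\infty\to 0$.
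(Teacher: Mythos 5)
Your strategy coincides with the paper's: both derive the Rellich--Pohozaev identity obtained by testing $P_{\epj}(\fepj,p_{\epj})$ against $\psi\,\partial_{x_1}\uepj$, pass to the limit using the strong gradient convergence of Lemma \ref{clw1-Lemma 3.1}(9) (available precisely because the limit exponent is the constant $p_0$) together with Lemma \ref{noweiss}, and then read off $\alpha^{p_0}(p_0-1)/p_0=M-\overline M$ with $\overline M\in\{0,M\}$, forcing $\overline M=0$ when $\alpha>0$. The one place you genuinely deviate is the justification of the test function $\zeta\,\partial_1\uepj$. The paper does this by replacing the degenerate operator with the uniformly elliptic regularization $\operatorname{div}\bigl((\tfrac1n+|\nabla v_n|^2)^{(p_\ep(x)-2)/2}\nabla v_n\bigr)$ with the same right-hand side and boundary data; the approximants $v_n$ lie in $C^{1,\alpha}(\overline{\Omega'})\cap W^{2,2}_{\rm loc}(\Omega')$, so $\psi\,(v_n)_{x_1}$ is a legitimate test function and the identity is exact at the level of $v_n$, after which one lets $n\to\infty$ using the uniform $C^{1,\alpha}$ bounds and uniqueness for the Dirichlet problem. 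Your difference-quotient substitute can be made to work, but not quite as you state it: the delicate term is $\int\zeta\,A^{\epj}(x,\nabla\uepj)\cdot D_1^h\nabla\uepj\,dx$, and a Taylor expansion of $\eta\mapsto|\eta|^{p}/p$ about $\nabla\uepj(x)$ leaves a remainder of order $h^{\alpha(1+\min(p-1,1))-1}$, which need not vanish for small H\"older exponents $\alpha$; the uniform $C^{1,\alpha}$ bound alone does not close this step. One should instead use the two-sided convexity inequality $A(x,\eta)\cdot(\eta'-\eta)\le G(x,\eta')-G(x,\eta)\le A(x,\eta')\cdot(\eta'-\eta)$ for $G(x,\eta)=|\eta|^{p(x)}/p(x)$, applied to both the forward and backward difference quotients (whose associated weak identities converge to the same limit), to squeeze out the Pohozaev identity using only continuity and boundedness of $\nabla\uepj$. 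With that repaired --- or with the paper's regularization --- the rest of your argument (the vanishing of the $\nabla p_{\epj}$ error terms, the disappearance of the $\fepj$ term, the identification of $\chi$ on the single component $\{x_1<0\}$, and the final algebra) matches the paper's proof.
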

\begin{proof}
Assume, for simplicity, that $x_0=0$. Since $u^{\epj}\geq 0$, we
have that $\alpha\geq 0$. If $\alpha=0$ there is nothing to prove.
So let us assume that $\alpha>0$.

Let $\psi \in C_0^{\infty}(\Omega)$. We claim that there holds
that
\begin{equation}\label{novale}
\begin{aligned}
-\int_{\Omega}&\frac{|\nabla u^{\epj}|^{p_{\epj}}}{p_{\epj}} \psi_{x_1}\, dx +
\int_{\Omega} |\nabla u^{\epj}|^{p_{\epj}-2} \nabla u^{\epj}\cdot \nabla \psi
\, u^{\epj}_{x_1}\, dx +\int_{\Omega} f^{\epj}u^{\epj}_{x_1}\psi\, dx=\\
& \int_{\Omega}\frac{|\nabla u^{\epj}|^{p_{\epj}}}{p_{\epj}}\log|\nabla u^{\epj}| (p_{\epj})_{x_1}\psi\, dx
-\int_{\Omega}\frac{|\nabla u^{\epj}|^{p_{\epj}}}{p_{\epj}^2} (p_{\epj})_{x_1}\psi\, dx+\int_{\Omega} B_{\epj}(u^{\epj})\psi_{x_1}\, dx.
\end{aligned}
\end{equation}

In fact, let $\Omega'\subset\subset\Omega$ be smooth and let $v_n$
be such that
\begin{equation}
\begin{cases}\label{ecua-n}
\mbox{div}((\frac{1}{n}+|\nabla
v_n|^2)^{\frac{{p_{\ep}}(x)-2}{2}}\nabla v_n)
={\beta}_{{\varepsilon}}(u^{\ep})+f^{\ep}=g^{\ep} &\quad \mbox{ in } \Omega'\\
v_n=u^{\ep} &\quad \mbox{ on }\partial \Omega',
\end{cases}
\end{equation}
were for simplicity we have denoted $\varepsilon_j=\varepsilon$.
By the results in \cite{Fan} and \cite{CL}, $v_n\in
C^{1,\alpha}(\overline \Omega')\cap W^{2,2}_{\rm{loc}} (\Omega')$,
with $||v_n||_{C^{1,\alpha}(\overline \Omega')}\le C$, with $C$
independent of $n$, and therefore, there exists $v_0$ such that,
for a subsequence,
\begin{align*}&v_n\rightarrow v_0 \quad \mbox{uniformly in } \Omega'\\
&\nabla v_n\rightarrow \nabla v_0 \quad \mbox{uniformly in
}\Omega'.
\end{align*}
We get $\Delta_{p_{\ep}(x)}v_0=\Delta_{p_{\ep}(x)}\uep=g^{\ep}$ in
$\Omega'$, with $v_0=\uep$ in $\partial \Omega'$ and therefore,
$v_0=\uep$.

In order to get \eqref{novale} we take as test function in the
weak formulation of \eqref{ecua-n} the function $\psi
{v_n}_{x_1}$, with $\psi \in C_0^{\infty}(\Omega')$. It follows
that

\begin{equation}\label{alfa}
\begin{aligned}
-\int_{\Omega}\Big(\frac{1}{n} + |\nabla {v_n}|^2\Big)^{\frac{{p_{\ep}-2}}{2}} &\nabla {v_n}\cdot \nabla {v_n}_{x_1}\, \psi
 \, dx=\\
&\int_{\Omega} \Big(\frac{1}{n} + |\nabla {v_n}|^2\Big)^{\frac{{p_{\ep}-2}}{2}} \nabla {v_n}\cdot \nabla \psi
\, {v_n}_{x_1}\, dx +\int_{\Omega} g^{\ep}{v_n}_{x_1}\psi\, dx.
\end{aligned}
\end{equation}
On the other hand,
\begin{equation}\label{beta}
\begin{aligned}
-\int_{\Omega}\frac{(\frac{1}{n} + |\nabla {v_n}|^2)^{\frac{{p_{\ep}}}{2}}}{p_{\ep}} \,{\psi}_{x_1}
 \, dx
 =&\int_{\Omega}\frac{{(\frac{1}{n} + |\nabla {v_n}|^2)}^{\frac{{p_{\ep}}}{2}}}{p_{\ep}}\,
 {\frac12}\log{\Big(\frac{1}{n} + |\nabla {v_n}|^2\Big)}{(p_{\ep})}_{x_1}\psi
 \, dx\\
 -\int_{\Omega}\frac{(\frac{1}{n} + |\nabla {v_n}|^2)^{\frac{{p_{\ep}}}{2}}}{p_{\ep}^2} &\,{(p_{\ep})}_{x_1}\psi
 \, dx +
\int_{\Omega}\Big(\frac{1}{n} + |\nabla {v_n}|^2\Big)^{\frac{{p_{\ep}-2}}{2}} \nabla {v_n}\cdot \nabla {v_n}_{x_1}\, \psi
 \, dx.
 \end{aligned}
\end{equation}
Then,  recalling that $g^{\ep}={\beta}_{{\varepsilon}}(u^{\ep})+f^{\ep}$,  we obtain from \eqref{alfa} and \eqref{beta}
\begin{equation*}
\begin{aligned}
&-\int_{\Omega}\frac{(\frac{1}{n} + |\nabla {v_n}|^2)^{\frac{{p_{\ep}}}{2}}}{p_{\ep}} \,{\psi}_{x_1}
 \, dx +\int_{\Omega} \Big(\frac{1}{n} + |\nabla {v_n}|^2\Big)^{\frac{{p_{\ep}-2}}{2}} \nabla {v_n}\cdot \nabla \psi
\, {v_n}_{x_1}\, dx +\int_{\Omega} f^{\ep}{v_n}_{x_1}\psi\, dx
 =\\
& \int_{\Omega}\frac{{(\frac{1}{n} + |\nabla {v_n}|^2)}^{\frac{{p_{\ep}}}{2}}}{p_{\ep}}\,
 \log{\Big(\frac{1}{n} + |\nabla {v_n}|^2\Big)}^{\frac12}{p_{\ep}}_{x_1}\psi
 \, dx
 -\int_{\Omega}\frac{(\frac{1}{n} + |\nabla {v_n}|^2)^{\frac{{p_{\ep}}}{2}}}{p_{\ep}^2} \,{p_{\ep}}_{x_1}\psi
 \, dx
  -\int_{\Omega} {\beta}_{{\varepsilon}}(u^{\ep}){v_n}_{x_1}\psi\, dx.
 \end{aligned}
\end{equation*}
Passing to the limit as $n\to \infty$ and integrating by parts in the last term, we get \eqref{novale}.

Now, by Lemma
\ref{noweiss}, we have that there exists
  $\chi\in L^1_{\rm{loc}}(\Omega)$ such that, for a subsequence,
$\bepj(\uepj)\to\chi$ in $L^1_{\rm{loc}}(\Omega)$. This, together with
 the strong convergence result in Lemma \ref{clw1-Lemma 3.1} and the fact that $\|\nabla p_{\epj}\|_{L^{\infty}}\to 0$ gives, when passing to the limit in \eqref{novale},
\begin{equation}\label{novale2}
-\int_{\Omega}\frac{|\nabla u_0|^{p_0}}{{p_0}} \psi_{x_1}\, dx +
\int_{\Omega} |\nabla u_0|^{p_0-2} \nabla u_0\cdot \nabla \psi
\, (u_0)_{x_1}\, dx=
 \int_{\Omega} \chi\psi_{x_1}\, dx.
\end{equation}

Now let $\overline{B_s}(0)\subset\Omega$. Using that, by Lemma \ref{noweiss},
$\chi\equiv M$ in ${B_s}(0)\cap\{x_1>0\}$ and  $\chi\equiv \overline{M}$ in ${B_s}(0)\cap\{x_1<0\}$, for
a constant $\overline{M}$, with $\overline{M}=0$ or $\overline{M}=M$,  and the fact that
$\nabla u_0=\alpha\chi_{\{x_1>0\}}e_1$, we obtain for
$\psi\in C_0^{\infty}({B_s}(0))$
$$
-\int_{\{x_1>0\}} \frac{\alpha^{p_0}}{{p_0}} \psi_{x_1}\, dx + \int_{\{x_1>0\}}
{\alpha^{p_0}} \psi_{x_1} \, dx=M\int_{\{x_1>0\}}
 \psi_{x_1}+\overline{M}\int_{\{x_1<0\}}
 \psi_{x_1}.
$$
Then, integrating by parts, we get
$$
 \Big(-\frac{\alpha^{p_0}}{{p_0}} + {\alpha^{p_0}}\Big)\int_{\{x_1=0\}}
 \psi \, dx'=M\int_{\{x_1=0\}}
 \psi \, dx'-\overline{M}\int_{\{x_1=0\}}
 \psi\, dx'.
$$

Thus, $(-\frac{\alpha^{p_0}}{{p_0}} + {\alpha^{p_0}})=M-\overline{M}$. Since we have assumed that $\alpha>0$, it follows that
$\overline{M}=0$ and therefore, $\alpha=\Big(\frac{p_0}{p_0-1}\,M\Big)^{1/p_0}$.

\end{proof}
\end{section}
\begin{section}{Asymptotic behavior of limit functions}

In this section we analyze the behavior of limit functions near the free boundary.

\medskip

For the next result we will need the following definition

\begin{defi} Let $u$ be a continuous nonnegative function in a domain $\Omega\subset
\Bbb R^{N}$. Let $x_0\in\Omega\cap\partial\{u>0\}$. We say that
$x_0$ is a regular point from the positive side if there is a ball
$B\subset\{u>0\}$ with $x_0\in\partial B$.
\end{defi}

\begin{theo}
\label{clw1-Theorem 6.1}
Let $\uepj$, $\fepj$, $p_{\epj}$, $\ep_j$, $u$, $f$ and $p$ be as in Lemma \ref{clw1-Lemma 3.2}.

Let $x_0\in\Omega\cap\partial\{u>0\}$. Assume one of the following
conditions holds:

\begin{itemize}
\item[(D)] There exist $\gamma>0$ and $0<c<1$ such that,
for every $x\in B_\gamma(x_0)\cap\partial\{u>0\}$ which is regular from the positive side and
$r\le\gamma$, there holds that $|\{u=0\}\cap B_r(x)|\ge c|B_r(x)|$.
\item[(L)] There exist $\gamma>0$, $\theta>0$ and $s_0>0$ such that for every point
$y\in B_\gamma(x_0)\cap\partial\{u>0\}$ which is regular from the positive side,
and for every ball $B_r(z)\subset\{u>0\}$ with $y\in\partial B_r(z)$ and $r\le\gamma$,
there exists a  unit vector ${\tilde e}_y$,  with
$\langle{\tilde e}_y,z-y\rangle>\theta||z-y||$, such that
$u(y-s{\tilde e}_y)=0$ for $0< s< s_0.$
\end{itemize}

Then,
$$
\limsup_{\stackrel{x\to x_0}{u(x)>0}}\,|\nabla u(x)|=0
\qquad\mbox{or} \qquad \limsup_{\stackrel{x\to x_0}{u(x)>0}}\,|\nabla u(x)|=
\lambda^*(x_0),
$$
where $\lambda^*(x)=\Big(\frac{p(x)}{p(x)-1}\,M\Big)^{1/p(x)}$ and
$\int \beta(s)\, ds=M$.
\end{theo}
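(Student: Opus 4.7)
The plan is to perform a blow-up at $x_0$ and reduce to the half-plane situation of Proposition \ref{clw1-Proposition 5.2}. Set $L:=\limsup_{x\to x_0,\,u(x)>0}|\nabla u(x)|$. If $L=0$ there is nothing to prove, so suppose $L>0$. I would choose $x_n\in\{u>0\}$ with $x_n\to x_0$ and $|\nabla u(x_n)|\to L$, let $y_n\in\{u=0\}$ realise $d_n:=\mathrm{dist}(x_n,\{u=0\})=|x_n-y_n|$, and observe that $y_n\in\partial\{u>0\}$ is regular from the positive side (since $B_{d_n}(x_n)\subset\{u>0\}$), that $y_n\to x_0$, and that $d_n\to 0$. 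After a rotation I may assume that $e_n:=(x_n-y_n)/d_n\to e_1$.

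Next I would form the blow-up $u_n(x):=u(y_n+d_n x)/d_n$. By Theorem \ref{estim-lip} the family $\{u_n\}$ is uniformly Lipschitz, and Lemma \ref{clw1-Lemma 3.2} (applied with centres $y_n$ and scales $d_n$) yields indices $\varepsilon_{j_n}$ with $\varepsilon_{j_n}/d_n\to 0$ such that, along a subsequence, $(u^{\varepsilon_{j_n}})_{d_n}\to U$ uniformly on compacta and $\nabla(u^{\varepsilon_{j_n}})_{d_n}\to\nabla U$ in $L^{p_0}_{\mathrm{loc}}(\mathbb{R}^N)$, where $p_0:=p(x_0)$. The blow-up $U$ is a nonnegative globally Lipschitz function with $U(0)=0$ and $B_1(e_1)\subset\{U>0\}$. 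Moreover, the rescaled right-hand sides $d_n f^{\varepsilon_{j_n}}(y_n+d_n\cdot)$ converge $*$-weakly to $0$ in $L^\infty$, while the rescaled exponents $p_{\varepsilon_{j_n}}(y_n+d_n\cdot)$ converge uniformly to the constant $p_0$ with vanishing Lipschitz seminorm $d_n L\to 0$, so the hypotheses of Proposition \ref{clw1-Proposition 5.2} are in force.

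The main step, and the principal obstacle, is to show that $U$ is a half-plane solution $U(x)=\alpha\langle x,\nu\rangle^+$ for some unit vector $\nu$ and some $\alpha\ge 0$. Under assumption (D), the density condition rescales to $|\{U=0\}\cap B_r(z)|\ge c|B_r(z)|$ for every $r>0$ and every $z\in\partial\{U>0\}$ that arises as a limit of regular free-boundary points. Combined with the facts that $U$ is $p_0$-harmonic in $\{U>0\}$ and globally Lipschitz, a further blow-up at $0$ produces a $1$-homogeneous global Lipschitz limit whose zero set has uniform density, and a Liouville-type argument (equivalently, the classification of homogeneous $p_0$-harmonic functions on a cone vanishing on its boundary) forces this cone to be a half-space, yielding $U=\alpha x_1^+$ after a rotation. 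Under (L), the cone-of-zeros condition rescales directly: the fixed angle $\theta$ and length $s_0$ survive the blow-up as an open half-space contained in $\{U=0\}$, and the same Liouville step then produces the half-plane profile.

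Once the half-plane structure of $U$ is established, Proposition \ref{clw1-Proposition 5.2} applied to the sequence $(u^{\varepsilon_{j_n}})_{d_n}$ gives $\alpha\in\{0,\lambda^*(x_0)\}$; since $U(e_1)>0$ (because $B_1(e_1)\subset\{U>0\}$) we must have $\alpha=\lambda^*(x_0)$. Finally, because $U$ is $p_0$-harmonic and strictly positive on a neighbourhood of $e_1$, Theorem \ref{regu} upgrades the $L^{p_0}_{\mathrm{loc}}$ gradient convergence to pointwise convergence near $e_1$, so that $|\nabla u(x_n)|=|\nabla u_n(e_n)|\to|\nabla U(e_1)|=\alpha=\lambda^*(x_0)$. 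Hence $L=\lambda^*(x_0)$, completing the dichotomy.
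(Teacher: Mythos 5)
There is a genuine gap at what you yourself identify as ``the main step'': the classification of the blow-up $U$ as a half-plane solution. Your proposed route --- rescale condition (D) or (L), take a further blow-up at $0$, and invoke ``a Liouville-type argument'' for $1$-homogeneous $p_0$-harmonic functions on a cone --- does not work as stated. First, nothing guarantees that a further blow-up of $U$ at $0$ is $1$-homogeneous: no Weiss-type monotonicity formula is available in this setting (note the paper's Lemma \ref{noweiss} is precisely a substitute for the lack of one), so homogeneity of blow-ups is not free. Second, even if a second blow-up were shown to be a half-plane $\alpha\langle x,\nu\rangle^+$, that would say nothing about $\nabla U(e_1)$ for the \emph{first} blow-up, so your final identification $|\nabla u(x_n)|\to|\nabla U(e_1)|=\alpha$ would not follow; moreover your claim $B_1(e_1)\subset\{U>0\}$ is itself unjustified ($u_n>0$ on $B_1(e_n)$ only gives $U\ge0$ in the limit). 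Third, conditions (D) and (L) are too weak to classify $U$ on the positive side: in the paper they are used only to kill the linear growth of $U$ on the \emph{negative} side of the limiting hyperplane.

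The missing idea is the following. The paper exploits that the gradient attains its supremum: after rotating so that $\nabla u(z_k)/|\nabla u(z_k)|\to e_1$ (a rotation aligned with the gradient, not with $(x_n-y_n)/d_n$), one gets $\nabla u_0(\bar z)=\alpha e_1$ together with $|\nabla u_0|\le\alpha$ everywhere. Since $\nabla u_0\neq0$ near $\bar z$, the derivative $w=\partial_{x_1}u_0$ solves a uniformly elliptic equation there and attains an interior maximum, so the strong maximum principle and a continuation argument force $u_0=\alpha x_1$ on $B_1(\bar z)$ and then on $\{x_1\ge0\}$. Only after this does one use Lemma \ref{development11} to get $u_0=-\bar\alpha x_1+o(|x|)$ on $\{x_1<0\}$, a second blow-up to produce $u_{00}=\alpha x_1^++\bar\alpha x_1^-$, conditions (D)/(L) to show $\bar\alpha=0$, and a double application of Lemma \ref{clw1-Lemma 3.2} followed by Proposition \ref{clw1-Proposition 5.2} (which requires the limit to be globally of the form $\alpha x_1^+$, hence must be applied to $u_{00}$, not to $u_0$). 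Your outline applies Lemma \ref{clw1-Lemma 3.2} only once and to the wrong object, and omits the maximum-principle step that is the heart of the proof.
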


\begin{rema}
In \cite{LW5} we prove that if $\uepj$, $\fepj$, $p_{\epj}$, $\ep_j$, $u$, $f$ and $p$ are as  in Theorem \ref{clw1-Theorem 6.1},
with $\uepj$ local minimizers of an energy functional then, $u$ satisfies condition (D) in Theorem \ref{clw1-Theorem 6.1} at every point in
$\Omega\cap\partial\{u>0\}$.
\end{rema}

\begin{proof}[Proof of Theorem \ref{clw1-Theorem 6.1}]
Let
$$
\alpha:=\limsup_{\stackrel{x\to x_0}{u(x)>0}} |\nabla u(x)|.
$$
Since $u\in Lip_{\rm{loc}}(\Omega)$, $\alpha<\infty$. If,
$\alpha=0$ there is nothing to prove. So, suppose that $\alpha>0$.
By the definition of $\alpha$ there exists a sequence
$z_k\rightarrow x_0$ such that
$$
u(z_k)>0,\quad \quad |\nabla u(z_k)|\rightarrow \alpha.
$$
Let $y_k$ be the nearest point from $z_k$ to $\Omega \cap
\partial\{u>0\}$ and let $d_k = |z_k-y_k|$.

Consider the blow up sequence $u_{d_k}$ with respect to
$B_{d_k}(y_k)$.  This is, $u_{d_k}(x)=\frac 1{d_k}u(y_k+d_k x)$.
Since $u$ is locally Lipschitz, and $u_{d_k}(0)=0$ for every $k$, there
exists $u_0\in Lip(\R^N)$, such  that (for a subsequence)
$u_{d_k}\to u_0$ uniformly on compact sets of $\R^N$.

Since $\Delta_{p(x)} u = f$ in  $\{u>0\}$, by interior H\"older gradient estimates (see, for instance,
\cite{Fan}), we
have that  $\Delta_{p_0} u_0=0$ in $\{u_0>0\}$ with $p_0=p(x_0)$.

Now, set $\bar{z}_k=(z_k-y_k)/d_k\in \partial B_1$. We may assume
that  $\bar{z}_k\to \bar{z}\in \partial B_1$. Take
$$
\nu_k:=\frac{\nabla u_{d_k}(\bar{z}_k)}{|\nabla
u_{d_k}(\bar{z}_k)|}= \frac{\nabla u({z}_k)}{|\nabla u({z}_k)|}.
$$
For a subsequence, and after a rotation, we can assume that
$\nu_k\to e_1$. Observe that $B_{2/3}(\bar{z})\subset
B_1(\bar{z}_k)$ for $k$ large, and therefore $\Delta_{p_0}u_0=0$ there. By interior H\"older gradient estimates, we have $\nabla u_{d_k}\to \nabla u_0$ uniformly in
$B_{1/3}(\bar{z})$, and therefore $\nabla u(z_k)\to \nabla
u_0(\bar z)$. Thus, $\nabla u_0(\bar z)=\alpha \,e_1$ and, in
particular, $\partial_{x_1} u_0(\bar{z})=\alpha$.

Next, we claim that $|\nabla u_0|\leq \alpha$ in $\R^N$. In fact,
let $R>1$ and $\delta>0$. Then, there exists $\tau_0>0$ such that
$|\nabla u(x)|\leq \alpha+\delta$ for any $x\in B_{\tau_0
R}(x_0)$. For $|z_k-x_0|<\tau_0 R/2$ and $d_k<\tau_0/2$ we have
$B_{d_k R}(z_k)\subset B_{\tau_0 R}(x_0)$ and therefore, $|\nabla
u_{d_k}(x)|\leq \alpha +\delta$ in $B_R$ for $k$ large. Passing to
the limit, we obtain $|\nabla u_0|\leq \alpha+\delta$ in $B_R$,
and since $\delta$ and $R$ were arbitrary, the claim holds.

Since $\nabla u_0$ is H\"older continuous in $B_{1/3}(\bar{z})$,
there holds that $\nabla u_0\neq0$ in a neighborhood of $\bar z$.
Thus,  $u_0\in W^{2,2}$ in a ball $B_r(\bar z)$ for some $r>0$
(see, for instance, \cite{Tolk} or \cite{CL}) and, since
$$
\int |\nabla u_0|^{p_0-2}\nabla u_0\cdot\nabla \varphi\,dx=0 \quad\mbox{for every
}\varphi\in C_0^\infty(B_r(\bar z)),
$$
taking $\psi\in C_0^\infty(B_r(\bar z))$ and $\varphi=\psi_{x_1}$,
and integrating by parts we see that, for $w=\frac{\partial
u_0}{\partial x_1}$ and suitable coefficients $a_{ij}(\nabla u_0)$,
$$
\sum_{i,j=1}^N\int_{B_r(\bar z)} a_{ij}\big(\nabla
u_0(x)\big)w_{x_j}\psi_{x_i}\,dx=0.
$$
This is, $w$ is a solution to a uniformly elliptic equation
$$
\mathcal T w:= \sum_{i,j=1}^N\frac{\partial}{\partial
x_i}\Big(a_{ij}\big(\nabla u_0(x)\big)w_{x_j}\Big)=0.
$$

Now, since $w\le \alpha$ in $B_r(\bar z)$,
$w(\bar z)=\alpha$ and $\mathcal T w=0$ in $B_r(\bar z)$, by the strong maximum principle
we conclude that $w\equiv \alpha$ in $B_r(\bar z)$.

 Now, since we can repeat this argument around any point where $w=\alpha$,  by a
continuation argument, we have that $w=\alpha$ in $B_1(\bar{z})$.

Therefore, $\nabla u_0=\alpha\, e_1$ in $B_1(\bar{z})$ and we
have, for some $y\in \R^N$, $u_0(x)=\alpha (x_1-y_1)$ in
$B_1(\bar{z})$. Since $u_0(0)=0$, there holds that $y_1=0$ and
$u_0(x)=\alpha x_1$ in $B_1(\bar{z})$. Finally, since
$\Delta_{p_0} u_0=0$ in $\{u_0>0\}$ by a continuation argument we
have that $u_0(x)=\alpha x_1$ in $\{x_1\geq 0\}$.

On the other hand, as $u_0\geq 0$, $\Delta_{p_0}u_0=0$ in $\{u_0>0\}$ and
$u_0=0$ in $\{x_1=0\}$ we have, by Lemma \ref{development11}, that
$$u_0(x)=-\bar\alpha x_1+o(|x|) \quad \mbox{ in } \{x_1<0\}$$
for some $\bar\alpha \geq 0$.

Now, define for $\lambda>0$, $(u_0)_{\lambda}(x)=\frac{1}{\lambda}
u_0(\lambda x)$. There exist a sequence $\lambda_n\to 0$ and
$u_{00}\in Lip(\R^N)$ such that $(u_0)_{\lambda_n}\to u_{00}$
uniformly on compact sets of $\R^N$. We have $u_{00}(x)=\alpha
x_1^++\bar\alpha x_1^-$.

We will show that $\bar\alpha=0$.

In fact, first assume condition (D) holds. We observe that, for any $R$, there holds for large $k$, that

 $$|\{u=0\}\cap B_{d_kR}(y_k)|\ge c|B_{d_kR}(y_k)|,$$
 implying that
$$|\{u_{d_k}=0\}\cap B_{R}(0)|\ge c|B_{R}(0)|,$$
and therefore
$$|\{u_0=0\}\cap B_{R}(0)|\ge c|B_{R}(0)|, \qquad {\rm and} \qquad |\{u_{00}=0\}\cap B_{1}(0)|\ge c|B_{1}(0)|.$$

This shows that $\bar\alpha=0$.

Now assume condition (L) holds. Then, for every $k$ there exists a unit vector $\tilde e_k$ such that
$$\langle{\tilde e_k},\frac{z_k-y_k}{d_k}\rangle>\theta \quad\text{and}\quad u(y_k-sd_k{\tilde e}_k)=0 \quad\text{ for }\quad 0< s< s_0.$$
So that
$$u_{d_k}(-s\tilde e_k)=0\quad\text{ for }\quad 0< s< s_0.$$

For a subsequence we have $\tilde e_k\to \tilde e$, and $\frac{z_k-y_k}{d_k}\to\bar z$,  with $\langle{\tilde e},\bar z\rangle\geq\theta$, implying
that $u_0(-s\tilde e)=0$ for $0< s< s_0$ and thus, $u_{00}(-\tilde e)=0$.

We now observe that, since we have seen that $B_1(\bar z)\subset\{u_0(x)=\alpha x_1\}=\{x_1>0\}$ and $0\in\partial B_1(\bar z)$, it follows that
$\bar z=e_1$.
Therefore $0=u_{00}(-\tilde e)=\bar\alpha \langle{\tilde e},e_1\rangle\ge\bar\alpha\theta$.

So that $\bar\alpha=0$ under condition (L) as well.

Now, by Lemma \ref{clw1-Lemma 3.2} we see that there exists a sequence $\delta_n\to
0$ and solutions  $u^{\delta_n}$  to
$P_{\delta_n}(f^{\delta_n}, p_{\delta_n})$ such that  $u^{\delta_n}\to
u_0$ uniformly on compact sets of $\R^N$, with
$f^{\delta_n}\rightharpoonup 0$ $*-$weakly in $L^\infty$ on
compact sets of $\R^N$, $p_{\delta_n}\to p(x_0)$ uniformly on
compact sets of $\R^N$ and
$\|\nabla p_{\delta_n}\|_{L^{\infty}}\to 0$ on
compact sets of $\R^N$.

 Applying a second time Lemma \ref{clw1-Lemma 3.2} we find a
sequence $\tilde\delta_n\to 0$ and solutions  $u^{\tilde\delta_n}$ to $P_{\tilde\delta_n}(f^{\tilde\delta_n},
p_{\tilde\delta_n})$ such that  $u^{\tilde\delta_n}\to u_{00}$
uniformly on compact sets of $\R^N$, with
$f^{\tilde\delta_n}\rightharpoonup 0$ $*-$weakly in $L^\infty$ on
compact sets of $\R^N$, $p_{\tilde\delta_n}\to p(x_0)$ uniformly on
compact sets of $\R^N$ and
$\|\nabla p_{\tilde\delta_n}\|_{L^{\infty}}\to 0$ on
compact sets of $\R^N$. Now we
can apply Proposition \ref{clw1-Proposition 5.2} and we conclude
that $\alpha=\lambda^*(x_0)$.
\end{proof}

\begin{defi}\label{nondegener} Let $v$ be a continuous nonnegative function
in a domain $\Omega\subset\mathbb{R}^N$. We say that $v$ is
nondegenerate at a point  $x_0\in \Omega\cap\{v=0\}$ if there
exist $c>0$, $r_0>0$ such that one of the following conditions holds:
\begin{equation}\label{nond-prom-bol}
\fint_{B_r(x_0)} v\, dx\geq c r\quad \mbox{ for } 0<r\leq r_0,
\end{equation}
\begin{equation}\label{nond-prom-casc}
\fint_{\partial B_r(x_0)} v\, dx\geq c r\quad \mbox{ for } 0<r\leq r_0,
\end{equation}
\begin{equation}\label{nond-sup}
\sup_{B_r(x_0)} v\geq c r\quad \mbox{ for } 0<r\leq r_0.
\end{equation}

\medskip

We say that $v$ is uniformly nondegenerate on a set
$\Gamma\subset\Omega\cap\{v=0\}$ in the sense of \eqref{nond-prom-bol} (resp. \eqref{nond-prom-casc}, \eqref{nond-sup}) if the constants $c$ and $r_0$
in \eqref{nond-prom-bol} (resp.  \eqref{nond-prom-casc}, \eqref{nond-sup}) can be taken independent of the point $x_0\in\Gamma$.
\end{defi}

\begin{rema}\label{equiv-nondeg}
Assume $v\ge 0$ is locally Lipschitz continuous in a domain $\Omega\subset\mathbb{R}^N$, $v\in W^{1,p(\cdot)}(\Omega)$ with $\Delta_{p(x)} v \ge  f \chi_{\{v >0\}}$,
where $f\in L^{\infty}(\Omega)$, $1<p_{\min}\le p(x)\le p_{\max}<\infty$ and $p(x)$ is
Lipschitz continuous. Then the three concepts of nondegeneracy in Definition \ref{nondegener} are equivalent
(for the idea of the proof, see Remark 3.1 in \cite{LW1}, where  the case $p(x)\equiv 2$ and $f\equiv 0$ is treated).
\end{rema}

\begin{rema}
In \cite{LW5} we prove that if $\uepj$, $\fepj$, $p_{\epj}$, $\ep_j$, $u$, $f$ and $p$ are as  in Lemma \ref{clw1-Lemma 3.2},
with $\uepj$ local minimizers of an energy functional then, $u$ is locally uniformly nondegenerate on
$\Omega\cap\partial\{u>0\}$.
\end{rema}

\begin{theo}\label{limsup4}
Let $\uepj$, $\fepj$, $p_{\epj}$, $\ep_j$, $u$, $f$ and $p$ be as in Lemma \ref{clw1-Lemma 3.2}.

Let
$x_0\in  \Omega\cap\partial\{u>0\}$ and suppose that $u$ is uniformly nondegenerate on $\Omega\cap\partial\{u>0\}$ in a neighborhood of $x_0$. Assume there is  a ball $B$ contained in
$ \{u=0\} $ touching  $ x_0$, then
\begin{equation}\label{limsup224}
\limsup_{\stackrel{x\to x_0}{u(x)>0}} \frac{u(x)}{\mbox{dist}(x,B)}=  \lambda^*(x_0),\end{equation} where
$\lambda^*(x)=\Big(\frac{p(x)}{p(x)-1}\,M\Big)^{1/p(x)}$ and $\int
\beta(s)\, ds=M$.
\end{theo}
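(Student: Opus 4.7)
The plan is to take a sequence realizing the $\limsup$, perform a single blow-up around its nearest-point projection on $\partial B$, identify the blow-up limit as the one-dimensional half-plane profile $\alpha\langle y,\nu\rangle^+$, transfer this blow-up to a subsequence of $\{\uepj\}$ via Lemma \ref{clw1-Lemma 3.2}, and then apply Proposition \ref{clw1-Proposition 5.2} to force $\alpha\in\{0,\lambda^*(x_0)\}$. The nondegeneracy hypothesis will rule out $\alpha=0$.

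More concretely, call $\alpha$ the $\limsup$ and pick $x_k\to x_0$ with $u(x_k)>0$ and $u(x_k)/d_k\to\alpha$, where $d_k:=\mbox{dist}(x_k,B)$. Let $y_k\in\partial B$ be the unique nearest point of $B$ to $x_k$, so $\nu_k:=(x_k-y_k)/d_k$ is the outward normal to $\partial B$ at $y_k$ and $\nu_k\to\nu$, the outward normal to $B$ at $x_0$. The rescalings $v_k(y):=u(y_k+d_ky)/d_k$ are uniformly Lipschitz by Theorem \ref{estim-lip} and converge, along a subsequence, locally uniformly to a function $v_0\in\mbox{Lip}(\R^N)$ with the following properties: $v_0\ge 0$, $v_0(0)=0$, $v_0(\nu)=\alpha$; $v_0\equiv 0$ on the closed half-space $\{\langle y,\nu\rangle\le 0\}$ (the rescaled balls $(B-y_k)/d_k$ exhaust its interior); $\Delta_{p_0}v_0=0$ in $\{v_0>0\}$ with $p_0:=p(x_0)$, by Lemma \ref{clw1-Lemma 3.1}(5), a change of variables and $C^{1,\alpha}$-estimates from Theorem \ref{regu}; and, using the defining property of $\alpha$ as a $\limsup$, the sharp upper bound $v_0(y)\le\alpha\langle y,\nu\rangle^+$ in $\R^N$ (since $v_k(y)\le(\alpha+\delta)\,\mbox{dist}(y_k+d_ky,B)/d_k\to(\alpha+\delta)\langle y,\nu\rangle^+$, letting $\delta\to 0^+$). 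Moreover, the uniform nondegeneracy at $x_0$ (Remark \ref{equiv-nondeg}) yields $z_r\in B_r(x_0)$ with $u(z_r)\ge cr/2$, whence $u(z_r)/\mbox{dist}(z_r,B)\ge c/2$ and $z_r\to x_0$, giving $\alpha\ge c/2>0$.

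The key step is to upgrade the bound $v_0\le\alpha\langle\cdot,\nu\rangle^+$ to an equality on $\R^N$. Writing $g(y):=\alpha\langle y,\nu\rangle^+$ and letting $\mathcal U$ denote the connected component of $\{v_0>0\}$ containing $\nu$, both $v_0$ and $g$ are $p_0$-harmonic in $\mathcal U$ with $v_0\le g$ and $v_0(\nu)=g(\nu)=\alpha$. Since $\nabla g\equiv\alpha\nu\ne 0$ the $p_0$-Laplacian linearizes there as a uniformly elliptic operator, $v_0\in C^{1,\alpha}_{\mathrm{loc}}(\mathcal U)$ by Theorem \ref{regu}, the tangency forces $\nabla v_0(\nu)=\nabla g(\nu)$, and the strong comparison principle (as in the proof of Theorem \ref{clw1-Theorem 6.1}) gives $v_0\equiv g$ near $\nu$ and then in all of $\mathcal U$ by continuation. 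Any $y^*\in\partial\mathcal U$ satisfies simultaneously $v_0(y^*)=0$ (as a free-boundary point of $v_0$) and, by continuity of $v_0=g$ up to $\partial\mathcal U$, $v_0(y^*)=g(y^*)$; hence $\langle y^*,\nu\rangle\le 0$, so $\partial\mathcal U\subset\{\langle y,\nu\rangle\le 0\}$, and the connectedness of $\{\langle y,\nu\rangle>0\}$ forces $\mathcal U=\{\langle y,\nu\rangle>0\}$ and $v_0(y)=\alpha\langle y,\nu\rangle^+$ in $\R^N$. Applying Lemma \ref{clw1-Lemma 3.2} at $y_n\to x_0$ (note $y_n\in\partial B\subset\{u=0\}$) with scales $\lambda_n=d_n$, I obtain $j_n\to\infty$ with $\epjn/d_n\to 0$ and $\uepjndn\to v_0$ uniformly on compacts; after rotating so that $\nu=e_1$, the rescaled exponents $\bar p_n(x):=p_{\epjn}(y_n+d_nx)$ satisfy $\bar p_n\to p_0$ uniformly on compacts with $\|\nabla\bar p_n\|_{L^\infty}\le d_nL\to 0$, and the rescaled right-hand sides $d_n f^{\epjn}(y_n+d_n\cdot)$ tend $\ast$-weakly to $0$ in $L^\infty$, so Proposition \ref{clw1-Proposition 5.2} applies and forces $\alpha\in\{0,\lambda^*(x_0)\}$. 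Combined with $\alpha>0$, this gives $\alpha=\lambda^*(x_0)$.

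The main obstacle is the global rigidity $v_0(y)=\alpha\langle y,\nu\rangle^+$. The $p_0$-harmonic touching argument yields equality only inside the component of $\{v_0>0\}$ through $\nu$; excluding further positive components or interior boundary portions of $\mathcal U$ inside the open half-space requires combining the sharp upper bound $v_0\le g$ (whose derivation uses crucially the definition of $\alpha$ as a $\limsup$) with the clopen topology argument above. Once this rigidity is in place, the remaining steps are direct adaptations of the arguments of Section 4 and of the proof of Theorem \ref{clw1-Theorem 6.1}.
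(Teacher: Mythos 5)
Your proposal is correct and follows essentially the same route as the paper's proof: blow-up at the nearest points on $\partial B$, the upper bound $v_0\le\alpha\langle\cdot,\nu\rangle^+$ from the $\limsup$, tangency at $\nu$ plus the strong maximum principle for the linearized operator, continuation to get the exact half-plane profile, and then Lemma \ref{clw1-Lemma 3.2} together with Proposition \ref{clw1-Proposition 5.2}. The only deviations are cosmetic: you obtain $\alpha>0$ directly from nondegeneracy at $x_0$ (the paper argues by contradiction through the blow-up limit), and you spell out the clopen/continuation step that the paper compresses into ``by continuation.''
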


\begin{proof}
 Let $\ell$ be the finite limit on the left hand side of \eqref{limsup224} and let $y_k \to x_0$
with $u(y_k)>0$ be such that
$$\frac{u(y_k)}{d_k}\to \ell, \quad d_k=\mbox{dist}(y_k,B).$$
Consider the blow up sequence $u_k$ with respect to
$B_{d_k}(x_k)$, where $x_k\in\partial B$ are points with
$|x_k-y_k|=d_k$, this is, $u_k(x)=\frac{u(x_k+d_k x)}{d_k}$.
Choose a subsequence with blow up limit
$u_0$, such that there exists
$$e:=\lim_{k\to\infty} \frac{y_k-x_k}{d_k}.$$

As in Theorem \ref{clw1-Theorem 6.1}, we see that $\Delta_{p_0} u_0=0$ in $\{u_0>0\}$ with $p_0=p(x_0)$.

By construction, $u_0(e)=\ell=\ell\langle e,e\rangle$,
$u_0(x)\leq \ell\langle x, e\rangle $ for $\langle x ,
e\rangle\geq 0$, $u_0(x)=0$ for $\langle x, e\rangle\leq 0.$

Let us see that $\ell>0$. In fact, if $\ell=0$, then $u_0\equiv 0$. Since $u(y_k)>0$ and $u(x_k)=0$, there exists
$z_k\in\partial\{ u>0\}$ in the segment between $y_k$ and $x_k$. By the nondegeneracy assumption,
$$\sup_{B_r(z_k)} u\ge cr\quad \text{for } 0< r\le r_0, \ c>0$$
and, in particular,
$$\sup_{B_{d_k}(z_k)}u \ge cd_k\quad \text{for } k\ge k_0.$$
Then, there exists $a_k$ such that $|a_k-z_k|\le d_k$ and $u(a_k)\ge cd_k$. Then, letting $\bar x_k=\frac{a_k-x_k}{d_k}$, we get that
$u_k(\bar x_k)\ge c$, with $|\bar x_k|\le 2$.  It follows that there exists $\bar x$ with $|\bar x|\le 2$ such that $u_0(\bar x)\ge c>0$, which is
a contradiction.

We now observe that $\nabla u_0(e)=\ell\,e$, and thus, $|\nabla
u_0(e)|=\ell>0$. Using that $\nabla u_0$ is continuous in
$\{u_0>0\}$ we deduce, from the fact that $\Delta_{p_0} u_0=0$ in
$\{u_0>0\}$, that $u_0\in W_{\rm{loc}}^{2,2}$ in
$\{u_0>0\}\cap\{|\nabla u_0|>0\}$. Then, $u_0$ is a  solution of
$Lv=0$ in $\{u_0>0\}\cap \{|\nabla u_0|>0\}$ where
$$
Lv:=\sum_{i,j=1}^N b_{ij}(\nabla u_0)v_{x_ix_j}
$$
is the uniformly elliptic operator given by
$$b_{ij}(z)=\delta_{ij}+ \frac{(p_0-2)}{|z|^2}{z_i z_j}.$$

Since $w(x)=\ell\langle x, e\rangle$ also satisfies $Lw=0$ we have,  from the strong maximum principle,
that $u_0$ and $w$ must coincide in a neighborhood of the point $e$.

By continuation we have that $u_0(x)=\ell\langle x, e\rangle^+$.
Thus, applying Lemma \ref{clw1-Lemma 3.2} as we did in Theorem \ref{clw1-Theorem 6.1} and using  Proposition \ref{clw1-Proposition 5.2}, we get that
$\ell=\lambda^*(x_0)$.
\end{proof}

\medskip

\begin{defi}
\label{clw2-Definition 3.1} We say that $\nu$ is the inward unit
normal to the free boundary $\fb$ at a point $x_0\in\fb$ in the
 measure theoretic sense, if $\nu
\in\Bbb R^N$, $|\nu|=1$
and
\begin{equation}
\label{(3.3)} \lim_{r\to 0} \frac1{r^{N}} \int_{B_r(x_0)}
|\chi_{\{u>0\}}- \chi_{\{x\,/\, \langle x-x_0,\nu\rangle
>0\}}|\,dx = 0.
\end{equation}

\end{defi}

\medskip

\begin{theo}
\label{clw2-Theorem 3.1}
Let $\uepj$, $\fepj$, $p_{\epj}$, $\ep_j$, $u$, $f$ and $p$ be as in Lemma \ref{clw1-Lemma 3.2}.

Let $x_0\in \Omega\cap\partial\{u>0\}$ be such that $\fb$ has at
$x_0$ an  inward unit normal $\nu$ in the measure theoretic sense
and suppose that $u$ is nondegenerate at $x_0$. Assume, in addition, that either condition (D) or condition (L) in Theorem \ref{clw1-Theorem 6.1} holds at $x_0$. Then,
$$
u(x)=\lambda^*(x_0)\langle x-x_0, \nu\rangle^+  + o(|x-x_0|),
$$
where $\lambda^*(x)=\Big(\frac{p(x)}{p(x)-1}\,M\Big)^{1/p(x)}$ and
$\int \beta(s)\, ds=M$.
\end{theo}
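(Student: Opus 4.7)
The strategy is to show that every subsequential blow-up limit of $u$ at $x_0$ equals $\lambda^*(x_0)\langle y,\nu\rangle^+$; combined with the uniform Lipschitz bound, this is equivalent to the asymptotic expansion. Without loss of generality take $x_0=0$ and $\nu=e_1$. For $\Ln\to 0^+$ set $u_{\Ln}(y):=\Ln^{-1}u(\Ln y)$; by Theorem \ref{estim-lip} and Arzel\`a--Ascoli, a subsequence converges uniformly on compacts to some $u_0\in\mathrm{Lip}(\R^N)$.

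The first step is to show $u_0\equiv 0$ on $\{y_1\le 0\}$. Changing variables in \eqref{(3.3)} yields $\chi_{\{u_{\Ln}>0\}}\to\chi_{\{y_1>0\}}$ in $L^1_{\mathrm{loc}}(\R^N)$; if $u_0(y)>0$ for some $y$ with $y_1<0$, continuity would force $u_{\Ln}>0$ on a small ball contained in $\{y_1<0\}$, contradicting this $L^1$ convergence. Next, Lemma \ref{clw1-Lemma 3.2} applied with $x_n\equiv 0$ and scales $\Ln$ produces a diagonal subsequence $\ep_{j_n}$ with $\ep_{j_n}/\Ln\to 0$ such that the rescaled $u^{\ep_{j_n}}$ converge to $u_0$ with strong $L^{p_0}$-gradient convergence, where $p_0:=p(0)$. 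Lemma \ref{clw1-Lemma 3.1}(5) then gives $\Delta_{p_0}u_0=0$ in $\{u_0>0\}$, and the nondegeneracy hypothesis forces $\sup_{B_1}u_0\ge c>0$, so $u_0\not\equiv 0$.

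The crux is the identification $u_0(y)=\alpha\,y_1^+$ for some $\alpha>0$. I would follow the continuation strategy in the proof of Theorem \ref{clw1-Theorem 6.1}: set $\alpha:=\sup_{\{u_0>0\}}|\nabla u_0|$, take a point $\bar z\in\{u_0>0\}$ (or an approximating sequence) achieving this supremum, and use that $\nabla u_0$ is H\"older continuous and nonvanishing near $\bar z$ (by Theorem \ref{regu}) to linearize $\Delta_{p_0}$ there as a uniformly elliptic operator. The strong maximum principle applied to each partial derivative $\partial_{x_i}u_0$ then propagates $\nabla u_0\equiv\alpha\, e_1$ (after a rotation in $y'$) throughout the connected component of $\{u_0>0\}$ containing $\bar z$, which combined with the vanishing on $\{y_1\le 0\}$ yields $u_0=\alpha\,y_1$ on that component. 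A Hopf-type boundary comparison with the explicit solution $\alpha\,y_1^+$ at the flat face $\{y_1=0\}$ rules out any other positivity component of $u_0$ inside $\{y_1>0\}$, giving $u_0=\alpha\,y_1^+$ on $\R^N$, with $\alpha>0$ by nondegeneracy.

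Finally, since by Lemma \ref{clw1-Lemma 3.2} the limit $u_0=\alpha\,y_1^+$ is realized as a uniform limit of the $u^{\ep_{j_n}}$ with rescaled $f^{\ep_{j_n}}\rightharpoonup 0$, rescaled $p_{\ep_{j_n}}\to p_0$ uniformly, and $\|\nabla p_{\ep_{j_n}}\|_\infty\to 0$ on compact sets, Proposition \ref{clw1-Proposition 5.2} applies and forces $\alpha=\lambda^*(0)$. Since every subsequential blow-up limit of $u$ at $0$ equals $\lambda^*(0)\,y_1^+$, the full family $u_{\Ln}$ converges to this limit, which is exactly the claimed development. The main obstacle is the identification in paragraph three: upgrading $u_0$ from linear on a single component to linear in the whole half-space $\{y_1>0\}$. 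Unlike the setting of Theorem \ref{limsup4}, the measure-theoretic normal condition provides no tangent ball and only density information about $\{u=0\}$, so ruling out extra positivity components inside $\{y_1>0\}$ is the delicate technical point.
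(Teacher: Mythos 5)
Your overall architecture (blow-up at $x_0$, vanishing of the limit on $\{y_1\le 0\}$ from the measure-theoretic normal, realization of the limit as a limit of $\ep$-solutions via Lemma \ref{clw1-Lemma 3.2}, identification of the slope via Proposition \ref{clw1-Proposition 5.2}) matches the paper, but the step you yourself flag as the crux --- showing the blow-up limit $u_0$ is \emph{exactly} $\alpha\,y_1^+$ --- has a genuine gap, and it is precisely the step where the paper's argument is different. Your plan sets $\alpha:=\sup_{\{u_0>0\}}|\nabla u_0|$ and wants to run the strong maximum principle at a point realizing this supremum. But nothing guarantees the supremum is attained, and a near-maximizing sequence may drift to $\partial\{u_0>0\}$ (where the linearization of $\Delta_{p_0}$ is unavailable) or to infinity. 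In Theorem \ref{clw1-Theorem 6.1} the analogous argument works only because the blow-up is centered at the nearest free-boundary points of a gradient-maximizing sequence, which manufactures a point $\bar z$ with $|\nabla u_0(\bar z)|=\alpha$ and $B_1(\bar z)\subset\{u_0>0\}$; there is no analogue of that construction here. The subsequent ``Hopf-type boundary comparison rules out any other positivity component'' is not an argument as stated. A telltale sign that something is missing: your proof never uses hypotheses (D) or (L), which the theorem explicitly assumes.

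The paper proceeds differently at exactly this point. First, Lemma \ref{development11} (a $p_0$-harmonic function vanishing on a hyperplane has a linear asymptotic development) gives only $U(x)=\alpha x_1^++o(|x|)$; a \emph{second} blow-up of $U$, two applications of Lemma \ref{clw1-Lemma 3.2}, nondegeneracy (to get $\alpha>0$) and Proposition \ref{clw1-Proposition 5.2} identify $\alpha=\lambda^*(x_0)$. Then conditions (D)/(L) enter: they allow Theorem \ref{clw1-Theorem 6.1} to be applied at $x_0$, which together with the uniform convergence of $\nabla u_{\lambda_j}$ on compact subsets of $\{U>0\}$ yields $|\nabla U|\le\lambda^*(x_0)$ in $\R^N$, hence $U\le\lambda^*(x_0)x_1$ in $\{x_1>0\}$. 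Finally, the exact identity $U\equiv\lambda^*(x_0)x_1$ is obtained by contradiction: if $U<\lambda^*(x_0)x_1$ somewhere then (by the strong maximum principle and continuation) everywhere, and comparing with the $p_0$-harmonic replacement $w$ of $U$ in $B_\delta^+$ one gets $w\ge\lambda^*(x_0)x_1+o(|x|)$ from below (since $\Delta_{p_0}U\ge0$) while Hopf's boundary lemma forces $w\le(\lambda^*(x_0)-\rho)x_1+o(|x|)$, a contradiction. To repair your proof you would need to supply this (or an equivalent) mechanism for upgrading the asymptotic development to the exact half-plane solution; the sup-of-gradient continuation argument alone does not do it.
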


\begin{proof}
Assume that $x_0=0$, and $\nu=e_1$. Take
$u_{\lambda}(x)=\frac{1}{\lambda} u(\lambda x).$ Let $\rho>0$ such
that $B_{\rho}\subset\subset\Omega$. Since $u_{\lambda}\in
Lip(B_{\rho/\lambda})$ uniformly in $\lambda$, $u_{\lambda}(0)=0$,
there exist $\lambda_j\to 0$ and $U\in Lip(\R^ N)$ such that
$u_{\lambda_j}\to U$ uniformly on compact sets of $\R^N$. {}From
Lemma \ref{clw1-Lemma 3.1}, $\Delta_{p(\lambda x)} u_{\lambda}=\lambda f(\lambda x)$
in $\{u_{\lambda}>0\}$. Using the fact that $e_1$ is the
 inward normal in the measure theoretic sense, we have, for
fixed $k$,
$$|\{u_{\lambda}>0\}\cap\{x_1<0\}\cap B_k|\to 0 \quad \mbox{ as } \lambda \to 0.$$
Hence, $U=0$ in $\{x_1<0\}$. Moreover, $U$ is nonnegative in
$\{x_1>0\}$, $\Delta_{p_0} U=0$ in $\{U>0\}$ with $p_0=p(x_0)$ and $U$ vanishes in $\{x_1\leq
0\}$. Then, by Lemma \ref{development11} we have that there
exists $\alpha\geq 0$ such that
$$U(x)=\alpha x_1^++o(|x|).$$

By Lemma \ref{clw1-Lemma 3.2} we see that there exist a sequence $\delta_n\to
0$ and solutions $u^{\delta_n}$ to
$P_{\delta_n}(f^{\delta_n}, p_{\delta_n})$ such that $u^{\delta_n}\to
U$ uniformly on compact sets of $\R^N$, with
$f^{\delta_n}\rightharpoonup 0$ $*-$weakly in $L^\infty$ on
compact sets of $\R^N$, $p_{\delta_n}\to p(x_0)$ uniformly on
compact sets of $\R^N$ and
$\|\nabla p_{\delta_n}\|_{L^{\infty}}\to 0$ on
compact sets of $\R^N$.

Define
$U_{\lambda}(x)=\frac{1}{\lambda} U(\lambda x)$, then
$U_{\lambda}\to \alpha x_1^+$ uniformly on compact sets of
$\R^N$.  Applying a second time Lemma \ref{clw1-Lemma 3.2} we find a
sequence $\tilde\delta_n\to 0$ and solutions  $u^{\tilde\delta_n}$ to
$P_{\tilde\delta_n}(f^{\tilde\delta_n}, p_{\tilde\delta_n})$ such that  $u^{\tilde\delta_n}\to \alpha x_1^+$
uniformly on compact sets of $\R^N$, with
$f^{\tilde\delta_n}\rightharpoonup 0$ $*-$weakly in $L^\infty$ on
compact sets of $\R^N$, $p_{\tilde\delta_n}\to p(x_0)$ uniformly on
compact sets of $\R^N$ and
$\|\nabla p_{\tilde\delta_n}\|_{L^{\infty}}\to 0$ on
compact sets of $\R^N$.

By the nondegeneracy assumption on $u$, we have
$$\frac{1}{r^N} \int_{B_r} u_{\lambda_j} \, dx \geq cr$$ and then
$$\frac{1}{r^N} \int_{B_r} U_{\lambda_j} \, dx \geq cr.$$

Therefore $\alpha>0$. Now, by Proposition \ref{clw1-Proposition 5.2}, $\alpha=\lambda^*(x_0).$

We have shown that
$$U(x)=\begin{cases} \lambda^*(x_0) x_1+o(|x|) &\quad x_1>0\\
0 &\quad x_1\leq 0.
       \end{cases}
$$

Then, using that $\Delta_{p(\lambda x)} u_{\lambda}=\lambda
f(\lambda x)$ in $\{u_{\lambda}>0\}$, by interior H\"older
gradient estimates we have $\nabla u_{\lambda_j}\rightarrow \nabla
U$ uniformly on compact subsets of $\{U>0\}$. Then, by Theorem
\ref{clw1-Theorem 6.1}, $|\nabla U|\leq \lambda^*(x_0)$ in $\R^N$.
As $U=0$ on $\{x_1=0\}$ we have, $U\leq \lambda^*(x_0)x_1$ in
$\{x_1>0\}$.

We claim that either $U\equiv \lambda^*(x_0)x_1$ in $\{x_1>0\}$ or else $U < \lambda^*(x_0)x_1$  in $\{x_1>0\}$.

In fact, if there exists $\bar x$ with $\bar x_1>0$ such that the equality holds at $\bar x$, then we proceed exactly as we did in the proof of Theorem \ref{limsup4} and deduce, from the strong maximum principle, that equality holds in a neighborhood of $\bar x$. Then, by continuation, we get $U\equiv \lambda^*(x_0)x_1$ in $\{x_1>0\}$.

So let us now assume  that $U < \lambda^*(x_0)x_1$  in $\{x_1>0\}$. Let $\delta >0$ be such that $U(\delta e_1)>0$.
Let $w$ be such that
$$\begin{cases} \Delta_{p_0} w =0 &\quad \mbox{ in } B_{\delta}^+\\
w =0 & \quad \mbox{ on }\{x_1=0\}\\
w=U &\quad \mbox{ on }\partial B_{\delta}\cap\{x_1>0\}.
\end{cases}
$$
Since $\Delta_{p_0} U\ge 0$ (this follows, for instance, from the application of Lemma \ref{radpos} with $g=0$ and $p(x)=p_0$), we have that $w\ge U$ in $B_{\delta}^+$. Therefore $w\ge \lambda^*(x_0) x_1+o(|x|)$ in  $B_{\delta}^+$.

We also have $w\le \lambda^*(x_0) x_1$ in  $B_{\delta}^+$. Moreover, $w< \lambda^*(x_0) x_1$ in  $B_{\delta}^+$, because this holds on $\partial B_{\delta}\cap\{x_1>0\}$, and with the same argument employed above we can see  that, if equality holds at a point in $B_{\delta}^+$, then it must hold everywhere on $B_{\delta}^+$.

On the other hand, we know that $w\in C^{1, \alpha}(\overline {B_\sigma^+})$ for any $\sigma <\delta$, and since
$w\ge \lambda^*(x_0) x_1+o(|x|)$ in  $B_{\delta}^+$, then $|\nabla w(0)|>0$, implying that $|\nabla w|>0$ in
$\overline{ B_{\gamma}^+}$ for some  $\gamma>0$.

Since, in $B_{\gamma}^+$, both $w$ and $\lambda^*(x_0) x_1$ are solutions to $Lv=0$, with $L$ a linear uniformly elliptic operator in nondivergence form, with $w< \lambda^*(x_0) x_1$ in $B_{\gamma}^+$, from the Hopf's boundary principle we get that $w\le (\lambda^*(x_0)-\rho) x_1+o(|x|)$ for some $\rho>0$ in $B_{\gamma}^+$. This is in contradiction with the fact that $w\ge \lambda^*(x_0) x_1+o(|x|)$ in  $B_{\delta}^+$.

This shows that $U\equiv \lambda^*(x_0)x_1$ in $\{x_1>0\}$. The proof is complete.
\end{proof}

\end{section}



\begin{section}{Weak solutions to the free boundary problem $P(f,p,{\lambda}^*)$}
\label{sect-weak-solut}

In this section we give a notion of
weak solution to the free boundary problem \ref{bernoulli-px} and
we show that, under suitable assumptions, limit functions to
problems $\pep(\fep, p_{\ep})$  are weak  solutions, in this sense, to the free
boundary problem with
$\lambda^*(x)=\Big(\frac{p(x)}{p(x)-1}\,M\Big)^{1/p(x)}$, $p=\lim
p_\ep$ and $f=\lim \fep$.

As a consequence, we are able to apply to limit functions the
result on the  regularity of the free boundary we prove in
\cite{LW4} (see Theorem \ref{reg-weak} below).

\begin{defi}\label{weak2} Let $\Omega\subset \Bbb R^{N}$ be a domain. Let $p$ be a measurable function in $\Omega$ with
$1<p_{\min}\le p(x)\le p_{\max}<\infty$, $\lstar$ continuous in
$\Omega$ with $0<\lone\le\lstar(x)\le\ltwo<\infty$ and $f\in
L^\infty(\Omega)$. We call $u$ a weak solution of
\ref{bernoulli-px} in $\Omega$ if
\begin{enumerate}
\item $u$ is continuous and nonnegative in $\Omega$, $u\in W^{1,p(\cdot)}(\Omega)$ and
$\Delta_{p(x)}u=f$ in $\Omega\cap\{u>0\}$.
\item For
$D\subset\subset \Omega$ there are constants $0< c_{\min}\leq
C_{\max}$ and $r_0>0$ such that for balls $B_r(x)\subset D$ with $x\in
\partial \{u>0\}$ and $0<r\le r_0$
$$
c_{\min}\leq \frac{1}{r}\sup_{B_r(x)} u \leq C_{\max}.
$$
\item For $\mathcal{H}^{N-1}$ a.e.
$x_0\in\partial_{\rm{red}}\{u>0\}$ (this is, for ${\mathcal
H}^{N-1}$-almost every point $x_0$ such that $\partial\{u> 0\}$
has an exterior  unit normal
 $\nu(x_0)$ in the measure theoretic sense)
$u$ has the asymptotic development
\begin{equation}\label{asymp-w}
u(x)=\lambda^*(x_0)\langle x-x_0,\nu(x_0)\rangle^-+o(|x-x_0|).
\end{equation}

\item For every $ x_0\in \Omega\cap\partial\{u>0\}$,
\begin{align*}
& \limsup_{\stackrel{x\to x_0}{u(x)>0}} |\nabla u(x)| \leq
\lambda^*(x_0).
\end{align*}

If there is a ball $B\subset\{u=0\}$ touching
$\Omega\cap\partial\{u>0\}$ at $x_0$, then
$$\limsup_{\stackrel{x\to x_0}{u(x)>0}} \frac{u(x)}{\mbox{dist}(x,B)}\geq  \lambda^*(x_0). $$
\end{enumerate}
\end{defi}

{}From the definition of weak solution above, and the results in
the previous sections we obtain:

\begin{theo}\label{lim=weak}
Let $\uepj$, $\fepj$, $p_{\epj}$, $\ep_j$, $u$, $f$ and $p$ be as in Lemma \ref{clw1-Lemma 3.2}.

Assume that $u$ is locally uniformly nondegenerate on  $\Omega\cap\partial\{u>0\}$ and that at every point  $x_0\in\Omega\cap\partial\{u>0\}$
either condition (D) or condition (L) in Theorem \ref{clw1-Theorem 6.1} holds. Then,  $u$ is a weak solution to the free boundary problem: $u\ge0$ and
\begin{equation}
\tag{$P(f,p,{\lambda}^*)$}
\begin{cases}
\Delta_{p(x)}u = f & \mbox{in }\{u>0\}\\
u=0,\ |\nabla u| = \lambda^*(x) & \mbox{on }\partial\{u>0\}
\end{cases}
\end{equation}
with $\lambda^*(x)=\Big(\frac{p(x)}{p(x)-1}\,M\Big)^{1/p(x)}$ and
$M=\int \beta(s)\, ds$.
\end{theo}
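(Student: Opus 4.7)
The plan is to verify, one after the other, the four clauses of Definition~\ref{weak2}, each of which will follow directly from results already proven in Sections~2--5, with only minor bookkeeping to translate between the inward and exterior measure-theoretic normals.

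First I would dispose of clause (1). By Lemma~\ref{clw1-Lemma 3.1}(1),(4),(5), the function $u$ is a locally Lipschitz (hence continuous) nonnegative limit, lies in $W^{1,p(\cdot)}_{\rm loc}(\Omega)$, and satisfies $\Delta_{p(x)}u=f$ in $\{u>0\}$. For clause (2), the upper bound $\frac{1}{r}\sup_{B_r(x)}u\le C_{\max}$ at points $x\in\partial\{u>0\}$ follows from the local Lipschitz continuity (Theorem~\ref{estim-lip}) combined with $u(x)=0$; the lower bound is precisely the supremum-form of nondegeneracy \eqref{nond-sup}, which by Remark~\ref{equiv-nondeg} is equivalent to the form of nondegeneracy assumed in the hypotheses.

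For clause (3), I would use that at $\mathcal{H}^{N-1}$-a.e.\ $x_0\in\partial_{\rm red}\{u>0\}$ the set $\{u>0\}$ possesses an inward unit normal $\nu^{\rm in}(x_0)=-\nu(x_0)$ in the measure theoretic sense of Definition~\ref{clw2-Definition 3.1}. Since the local uniform nondegeneracy hypothesis implies in particular nondegeneracy at $x_0$, and (D) or (L) holds there, Theorem~\ref{clw2-Theorem 3.1} applies and yields
\[
u(x)=\lambda^*(x_0)\langle x-x_0,\nu^{\rm in}(x_0)\rangle^{+}+o(|x-x_0|)=\lambda^*(x_0)\langle x-x_0,\nu(x_0)\rangle^{-}+o(|x-x_0|),
\]
which is exactly the asymptotic development \eqref{asymp-w}.

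Finally, clause (4) splits into two parts. The first inequality $\limsup_{x\to x_0,\,u(x)>0}|\nabla u(x)|\le \lambda^*(x_0)$ at every $x_0\in\Omega\cap\partial\{u>0\}$ is immediate from Theorem~\ref{clw1-Theorem 6.1}, which under the assumed (D) or (L) forces this $\limsup$ to be either $0$ or $\lambda^*(x_0)$. The second part, applicable whenever a ball $B\subset\{u=0\}$ touches $\partial\{u>0\}$ at $x_0$, follows from Theorem~\ref{limsup4} (whose hypotheses are precisely the local uniform nondegeneracy near $x_0$), which even gives equality $\limsup u(x)/\mbox{dist}(x,B)=\lambda^*(x_0)$, a fortiori the required lower bound. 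The main (modest) obstacle is the sign identification between the inward measure-theoretic normal used in Section~\ref{sect-weak-solut} through Theorem~\ref{clw2-Theorem 3.1} and the exterior normal appearing in \eqref{asymp-w}; beyond this the argument is a systematic assembly of the preceding theorems.
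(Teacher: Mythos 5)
Your proof is correct and follows essentially the same route as the paper, whose own proof of Theorem \ref{lim=weak} is precisely the assembly of Theorem \ref{estim-lip}, Lemma \ref{clw1-Lemma 3.1}, Remark \ref{equiv-nondeg} and Theorems \ref{clw1-Theorem 6.1}, \ref{limsup4} and \ref{clw2-Theorem 3.1} against the four clauses of Definition \ref{weak2}. Your explicit clause-by-clause verification, including the sign conversion between the inward and exterior measure-theoretic normals, is just a spelled-out version of that argument.
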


\begin{proof} The result follows from Theorem \ref{estim-lip}, Lemma \ref{clw1-Lemma 3.1}, Remark \ref{equiv-nondeg} and Theorems \ref{clw1-Theorem 6.1}, \ref{limsup4} and \ref{clw2-Theorem 3.1}.
\end{proof}

\medskip

\begin{rema}
In \cite{LW5} we prove that if $\uepj$, $\fepj$, $p_{\epj}$, $\ep_j$, $u$, $f$ and $p$ are as  in Lemma \ref{clw1-Lemma 3.2},
with $\uepj$ local minimizers of an energy functional,  $u$ is under the assumptions of Theorem~\ref{lim=weak}.
\end{rema}

\medskip

In \cite{LW4} we prove the following result for weak solutions
that applies, in particular, to limit functions $u$ as those in
Theorem \ref{lim=weak}, at every point in
$\Omega\cap\partial_{\rm{red}}\{u>0\}$.
\begin{theo} \label{reg-weak} Let $p\in Lip(\Omega)$ and $\lambda^*$ H\"older continuous in $\Omega$.
Let $u$ be a weak solution of \ref{bernoulli-px} in $\Omega$. Let
$x_0\in\Omega\cap\partial_{\rm{red}}\{u>0\}$ be such that $u$ has
the asymptotic development \eqref{asymp-w}. There exists $r_0>0$
such that $B_{r_0}(x_0)\cap\partial\{u>0\}$ is a $C^{1,\alpha}$
surface for some $0<\alpha<1$. It follows that, in $B_{r_0}(x_0)$,
$u$ is $C^1$ up to $\partial\{u>0\}$ and the free boundary
condition is satisfied in the classical sense. In addition, there
is a neighborhood $\mathcal U$ of $x_0$ such that $\nabla u\neq0$
in $\mathcal U\cap\{u>0\}$, $u\in W_{\rm{loc}}^{2,2}(\mathcal
U\cap\{u>0\})$ and the equation is satisfied in a pointwise sense
in $\mathcal U\cap\{u>0\}$. If moreover $\nabla p$ and $f$ are
H\"older continuous in $\Omega$, then $u\in C^2(\mathcal
U\cap\{u>0\})$ and the equation is satisfied in the classical
sense in $\mathcal U\cap\{u>0\}$.
\end{theo}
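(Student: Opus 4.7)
The approach is Caffarelli's flatness-implies-regularity scheme, adapted to the $p(x)$-Laplacian free boundary problem. First I would normalize so that $x_0=0$ and $\nu(x_0)=e_N$ (the exterior unit normal), and use the asymptotic development \eqref{asymp-w}, the Lipschitz bound from item (4), and the nondegeneracy from item (2) of Definition \ref{weak2} to show that the rescaled functions $u_r(x)=r^{-1}u(rx)$ converge locally uniformly to the half-plane profile $U_0(x)=\lambda^*(0) x_N^-$ and that $\partial\{u_r>0\}\cap B_1$ sits in an arbitrarily thin slab $\{|x_N|<\sigma(r)\}$ as $r\to 0$. This provides the initial ``$\sigma$-flatness'' at some sufficiently small scale $r_1$ to seed the iteration.

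The heart of the argument is an improvement-of-flatness lemma: there exist universal constants $\sigma_0,\rho\in(0,1)$ and $C>0$ such that if $u$ is a weak solution of \ref{bernoulli-px} in $B_1$ with $\|f\|_{L^\infty}$, $\|\nabla p\|_{L^\infty}$, and the oscillation of $\lambda^*$ all sufficiently small, and if
\begin{equation*}
\lambda^*(0)(x_N+\sigma)^- \le u(x) \le \lambda^*(0)(x_N-\sigma)^- \quad \text{in } B_1
\end{equation*}
for some $\sigma\le\sigma_0$, then there exists a unit vector $\nu_1$ with $|\nu_1-e_N|\le C\sigma$ such that the rescaled function $u_\rho(x)=\rho^{-1}u(\rho x)$ satisfies the analogous sandwich in $B_1$ with $e_N$ replaced by $\nu_1$ and $\sigma$ replaced by $\sigma/2$. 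I would prove this by De Silva's compactness/contradiction method: assume the statement fails along a sequence $u_k$ with flatness $\sigma_k\downarrow 0$, rescale the error $\tilde u_k(x)=(u_k(x)-\lambda^*(0)x_N^-)/(\lambda^*(0)\sigma_k)$ on the positivity set, and extract a uniform limit $\tilde u$ on $B_1\cap\{x_N<0\}$ using a partial Harnack inequality for the flat free boundary built from Theorem \ref{har} together with the interior $C^{1,\alpha}$ estimates of Theorem \ref{regu}. Linearizing $\Delta_{p_0}$ around $U_0$ and linearizing the free boundary condition along the way shows that $\tilde u$ solves a constant-coefficient Neumann-type boundary value problem in the half-ball, for which $C^{1,\alpha}$ regularity at the boundary is classical; this estimate contradicts the assumed failure of the improvement.

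The main technical obstacle lies in producing the sub- and super-solutions that power the partial Harnack step for the $p(x)$-Laplacian. The variable exponent prevents the use of the explicit separable barriers available in the constant-$p$ case, while the inhomogeneous term $f$ and the oscillation of $\lambda^*$ contribute error terms that must be absorbed into the improvement at the correct power of $\sigma$. The exponential-type barrier of Lemma \ref{cota-barrera} is the natural starting point: by perturbing it off the planar profile $U_0$ and carefully estimating the resulting $\Delta_{p(x)}$ (keeping track of the $\log M$ term that arises from the variable exponent), one can obtain the required one-sided planar perturbation statements.

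Once the improvement lemma is established, iterating over dyadic scales yields a Cauchy sequence of rotations $\nu_k\to\bar\nu$ at a geometric rate, which encodes $C^{1,\alpha}$ regularity of $\partial\{u>0\}$ in a neighborhood of $x_0$. The remaining conclusions then follow from boundary regularity theory for the $p(x)$-Laplacian: boundary $C^{1,\alpha}$ estimates, in the spirit of Theorem \ref{regu}, give $u\in C^1$ up to $\partial\{u>0\}$ and realize the free boundary condition in the classical sense. Since $|\nabla u|=\lambda^*(x_0)>0$, one has $\nabla u\neq 0$ in a neighborhood $\mathcal U\cap\{u>0\}$, where the equation becomes uniformly elliptic and $u\in W^{2,2}_{\rm{loc}}$ by the results of \cite{Tolk} and \cite{CL}; classical Schauder estimates then promote this to $C^2$ whenever $\nabla p$ and $f$ are H\"older continuous.
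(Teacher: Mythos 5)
You should first be aware that this paper does not actually prove Theorem \ref{reg-weak}: it is stated as a result established in the companion paper \cite{LW4} (listed as ``in preparation''), so there is no in-paper proof to compare your argument against. What you have written is a reasonable and standard \emph{strategy} (flatness improvement in the style of De Silva / Alt--Caffarelli, seeded by the asymptotic development \eqref{asymp-w} and the nondegeneracy of item (2) of Definition \ref{weak2}, followed by boundary $C^{1,\alpha}$ theory and Schauder estimates for the last assertions), and the closing paragraph about $\nabla u\neq 0$, $W^{2,2}_{\rm loc}$ via \cite{Tolk} and \cite{CL}, and $C^2$ under H\"older continuity of $\nabla p$ and $f$ is essentially correct modulo the earlier steps.

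However, as a proof the proposal has a genuine gap: the entire analytic content of the theorem is concentrated in the improvement-of-flatness lemma, and you assert it rather than prove it. Specifically, none of the following is carried out: (i) the partial Harnack inequality near a flat free boundary for the inhomogeneous $p(x)$-Laplacian, which does not follow from Theorem \ref{har} (an interior estimate for nonnegative solutions) and requires bespoke comparison sub/supersolutions adapted to the planar profile --- perturbing the barrier of Lemma \ref{cota-barrera} is named as ``the natural starting point'' but the computation controlling $\Delta_{p(x)}$ of such a perturbation, and the verification that the errors from $f$, $\nabla p$ and the H\"older oscillation of $\lambda^*$ enter at order $o(\sigma)$ (or $\sigma^{1+\gamma}$) so they can be absorbed, is exactly the hard part; (ii) the compactness of the normalized errors $\tilde u_k$ and the identification of the linearized transmission/Neumann problem for the degenerate operator $\Delta_{p_0}$ around the one-dimensional profile (the linearization of $|\nabla u|^{p_0-2}\nabla u$ at a nonzero constant gradient is anisotropic, not the Laplacian, and the free boundary condition must be linearized consistently); and (iii) the passage from the purely asymptotic, $\mathcal H^{N-1}$-a.e. information in items (3)--(4) of Definition \ref{weak2} to the two-sided pointwise sandwich $\lambda^*(0)(x_N+\sigma)^-\le u\le \lambda^*(0)(x_N-\sigma)^-$ at a definite scale, where in particular the lower barrier needs a quantitative nondegeneracy argument and the upper one needs more than the $\limsup$ bound of item (4). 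Until these steps are supplied, the proposal is an outline of the expected proof of \cite{LW4} rather than a proof.
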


\bigskip

\end{section}


\appendix

\section{} \label{appA1}

\setcounter{equation}{0}

In this appendix we collect some result on Lebesgue and Sobolev spaces with variable exponent as well as some other results that are used in the paper.

\medskip

Let $p :\Omega \to  [1,\infty)$ be a measurable bounded function,
called a variable exponent on $\Omega$ and denote $p_{\max} =
{\rm ess sup} \,p(x)$ and $p_{\min} = {\rm ess inf} \,p(x)$. We define the
variable exponent Lebesgue space $L^{p(\cdot)}(\Omega)$ to consist
of all measurable functions $u :\Omega \to \R$ for which the
modular $\varrho_{p(\cdot)}(u) = \int_{\Omega} |u(x)|^{p(x)}\, dx$
is finite. We define the Luxemburg norm on this space by
$$
\|u\|_{L^{p(\cdot)}(\Omega)} = \|u\|_{p(\cdot)}  = \inf\{\lambda > 0: \varrho_{p(\cdot)}(u/\lambda)\leq 1 \}.
$$

This norm makes $L^{p(\cdot)}(\Omega)$ a Banach space.

One central property of these spaces (since $p$ is bounded) is that $\varrho_{p(\cdot)}(u_i)\to 0$
 if and only if $\|u_i\|_{p(\cdot)}\to 0$, so that the norm and modular topologies coincide. In fact, we have

\bigskip
\begin{prop}\label{equi}
There holds
\begin{align*}
\min\Big\{\Big(\int_{\Omega} |u|^{p(x)}\, dx\Big)
^{1/{p_{\min}}},& \Big(\int_{\Omega} |u|^{p(x)}\, dx\Big)
^{1/{p_{\max}}}\Big\}\le\|u\|_{L^{p(\cdot)}(\Omega)}\\
 &\leq  \max\Big\{\Big(\int_{\Omega} |u|^{p(x)}\, dx\Big)
^{1/{p_{\min}}}, \Big(\int_{\Omega} |u|^{p(x)}\, dx\Big)
^{1/{p_{\max}}}\Big\}.
\end{align*}
\end{prop}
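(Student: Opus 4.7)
The claim relates the Luxemburg norm $\|u\|_{p(\cdot)}$ to the modular $\varrho=\int_\Omega |u|^{p(x)}\,dx$. My plan is to prove both inequalities by exploiting the elementary pointwise comparison between $\lambda^{p(x)}$ and the constants $\lambda^{p_{\min}}$, $\lambda^{p_{\max}}$, depending on whether $\lambda$ is above or below $1$. The proof is essentially a direct argument from the definition $\|u\|_{p(\cdot)}=\inf\{\lambda>0:\varrho_{p(\cdot)}(u/\lambda)\leq 1\}$; the only ``trick'' is a case-split on whether $\varrho\ge 1$ or $\varrho\le 1$.

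First I will dispose of the trivial case $\varrho=0$, where $u=0$ a.e. and both sides vanish. Assuming $\varrho>0$, I set
\[
M=\max\{\varrho^{1/p_{\min}},\varrho^{1/p_{\max}}\},\qquad m=\min\{\varrho^{1/p_{\min}},\varrho^{1/p_{\max}}\}.
\]
The key pointwise observations are: \emph{(i)} $M^{p(x)}\ge \varrho$ for a.e.\ $x\in\Omega$, and \emph{(ii)} $m^{p(x)}\le \varrho$ for a.e.\ $x\in\Omega$. To verify \emph{(i)}, when $\varrho\ge 1$ we have $M=\varrho^{1/p_{\min}}$ and $M^{p(x)}=\varrho^{p(x)/p_{\min}}\ge \varrho$ since the exponent is $\ge 1$ and the base is $\ge 1$; when $\varrho\le 1$ we have $M=\varrho^{1/p_{\max}}$ and $M^{p(x)}=\varrho^{p(x)/p_{\max}}\ge \varrho$ since the exponent is $\le 1$ and the base is $\le 1$. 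The verification of \emph{(ii)} is symmetric.

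For the upper bound, from \emph{(i)}
\[
\varrho_{p(\cdot)}(u/M)=\int_\Omega \frac{|u(x)|^{p(x)}}{M^{p(x)}}\,dx\le \frac{1}{\varrho}\int_\Omega |u(x)|^{p(x)}\,dx=1,
\]
so $M$ lies in the admissible set in the definition of the Luxemburg norm, giving $\|u\|_{p(\cdot)}\le M$. For the lower bound, from \emph{(ii)}, the same computation yields $\varrho_{p(\cdot)}(u/m)\ge 1$; since $u\not\equiv 0$, for any $0<\lambda<m$ we have $|u(x)/\lambda|^{p(x)}>|u(x)/m|^{p(x)}$ on the set $\{u\ne 0\}$, so $\varrho_{p(\cdot)}(u/\lambda)>\varrho_{p(\cdot)}(u/m)\ge 1$. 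Hence no such $\lambda$ belongs to the admissible set, and $\|u\|_{p(\cdot)}\ge m$.

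I do not expect any real obstacle here. The only point requiring a moment of attention is the case split $\varrho\ge 1$ vs.\ $\varrho\le 1$ in establishing the two pointwise bounds; once those are in hand, both halves of the inequality follow from the definition of the Luxemburg norm by a one-line modular estimate.
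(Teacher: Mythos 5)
Your proof is correct. The paper itself does not prove Proposition \ref{equi}: it is stated in the appendix as a standard fact about variable exponent Lebesgue spaces, with the reader referred to \cite{DHHR,KR}. Your argument --- the pointwise bounds $m^{p(x)}\le\varrho\le M^{p(x)}$ obtained by splitting on $\varrho\ge1$ versus $\varrho\le1$, followed by the one-line modular estimates $\varrho_{p(\cdot)}(u/M)\le1$ and $\varrho_{p(\cdot)}(u/\lambda)>1$ for $\lambda<m$ --- is precisely the standard proof found in those references, and all the steps (including the strict monotonicity used to exclude $\lambda<m$ from the admissible set, which uses $|\{u\ne0\}|>0$) are sound.
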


\bigskip

Let $W^{1,p(\cdot)}(\Omega)$ denote the space of measurable functions $u$ such that $u$ and the distributional derivative $\nabla u$ are in $L^{p(\cdot)}(\Omega)$. The norm

$$
\|u\|_{1,p(\cdot)}:= \|u\|_{p(\cdot)} + \| |\nabla u| \|_{p(\cdot)}
$$
makes $W^{1,p(\cdot)}$ a Banach space.

The space $W_0^{1,p(\cdot)}(\Omega)$ is defined as the closure of the $C_0^{\infty}(\Omega)$ in $W^{1,p(\cdot)}(\Omega)$.

In some occasions, it is necessary  to assume extra hypotheses on the regularity of $p(x)$.
We say that $p$ is log-H\"{o}lder continuous
if there exists a constant $C$ such that
$$|p(x) - p(y)| \leq \frac{C}{\big|\log\, |x - y|\big|}$$
if $|x - y| < 1/2 $.

If one assumes that $p$ is log-H\"{o}lder continuous then, there holds that $C^{\infty}(\overline{\Omega})$ is dense in $W^{1,p(\cdot)}(\Omega)$.

Some important results for these spaces are

\begin{theo}\label{ref}
Let $p'(x)$ such that $$\frac{1}{p(x)}+\frac{1}{p'(x)}=1.$$ Then
$L^{p'(\cdot)}(\Omega)$ is the dual of $L^{p(\cdot)}(\Omega)$.
Moreover, if $p_{\min}>1$, $L^{p(\cdot)}(\Omega)$ and
$W^{1,p(\cdot)}(\Omega)$ are reflexive.
\end{theo}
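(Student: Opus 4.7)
The plan is to prove the statement in three stages: (i) H\"older's inequality, giving an embedding $J:L^{p'(\cdot)}(\Omega)\to (L^{p(\cdot)}(\Omega))^*$; (ii) surjectivity of $J$ via a Radon--Nikodym argument; and (iii) reflexivity from duality together with a subspace argument for $W^{1,p(\cdot)}$. First, using the pointwise Young inequality $ab\leq a^{p(x)}/p(x)+b^{p'(x)}/p'(x)$ with $a=|f(x)|/\|f\|_{p(\cdot)}$ and $b=|g(x)|/\|g\|_{p'(\cdot)}$, and integrating, I get $\int_\Omega |fg|\,dx\leq C\|f\|_{p(\cdot)}\|g\|_{p'(\cdot)}$ once one invokes Proposition \ref{equi} to pass between modular and norm. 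This shows $J(g)(f):=\int_\Omega fg\,dx$ defines a bounded functional with $\|J(g)\|\leq C\|g\|_{p'(\cdot)}$. For the reverse bound, I would test $J(g)$ against the (normalized) extremal $f_g(x)=\mathrm{sgn}(g(x))\,|g(x)|^{p'(x)-1}$, and use Proposition \ref{equi} once more to recover $\|g\|_{p'(\cdot)}\leq C\|J(g)\|$, so $J$ is a topological embedding.

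The main obstacle is surjectivity of $J$. Given $\Phi\in (L^{p(\cdot)}(\Omega))^*$, the plan is to exhaust $\Omega$ by an increasing sequence $\Omega_n$ of sets of finite measure and define $\nu_n(E):=\Phi(\chi_E)$ for measurable $E\subset\Omega_n$. Because $p$ is bounded, $\|\chi_E\|_{p(\cdot)}\to 0$ as $|E|\to 0$ (use Proposition \ref{equi} once more), so $\nu_n$ is a countably additive signed measure absolutely continuous with respect to Lebesgue measure. The Radon--Nikodym theorem produces $g_n\in L^1(\Omega_n)$ with $\Phi(\chi_E)=\int_E g_n\,dx$; compatibility on overlaps yields a global $g\in L^1_{\rm loc}(\Omega)$. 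Since simple functions supported in sets of finite measure are dense in $L^{p(\cdot)}(\Omega)$ (using boundedness of $p$), the identity $\Phi(f)=\int_\Omega fg\,dx$ extends to all $f\in L^{p(\cdot)}(\Omega)$. Finally, testing $\Phi$ against the truncated extremals $f_g\chi_{\{|g|\leq k\}\cap\Omega_n}$ and letting $k,n\to\infty$ gives, via Fatou and Proposition \ref{equi}, that $g\in L^{p'(\cdot)}(\Omega)$ with $\|g\|_{p'(\cdot)}\leq C\|\Phi\|$. Hence $J$ is onto and $L^{p'(\cdot)}(\Omega)\simeq (L^{p(\cdot)}(\Omega))^*$.

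For reflexivity when $p_{\min}>1$, both $p(\cdot)$ and $p'(\cdot)$ satisfy $1<(p')_{\min}\leq (p')_{\max}<\infty$, so the duality of step (ii) applied twice gives a natural isomorphism $(L^{p(\cdot)}(\Omega))^{**}\simeq (L^{p'(\cdot)}(\Omega))^*\simeq L^{p(\cdot)}(\Omega)$. A direct check shows that under these identifications the composition coincides with the canonical evaluation map $u\mapsto \widehat u$, so $L^{p(\cdot)}(\Omega)$ is reflexive. (Alternatively, one can derive Clarkson-type inequalities pointwise in $p(x)$ and apply Milman--Pettis, but the duality route is shorter given what has just been proved.) For $W^{1,p(\cdot)}(\Omega)$, consider the linear map
\[
T:W^{1,p(\cdot)}(\Omega)\to L^{p(\cdot)}(\Omega)\times \bigl(L^{p(\cdot)}(\Omega)\bigr)^N, \qquad Tu=(u,\nabla u).
\]
By definition of the norm, $T$ is an isometry onto its image. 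Its image is closed: if $(u_k,\nabla u_k)\to (v,w)$ in the product space, then $u_k\to v$ and $\nabla u_k\to w$ in $L^{p(\cdot)}$, which by integration against test functions forces $w=\nabla v$ in the distributional sense, so $(v,w)=Tv\in T(W^{1,p(\cdot)}(\Omega))$. Since finite products and closed subspaces of reflexive Banach spaces are reflexive, $W^{1,p(\cdot)}(\Omega)$ is reflexive, completing the proof.
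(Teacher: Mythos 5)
The paper does not prove this statement at all: Theorem \ref{ref} is quoted as background from the literature, with the reader referred to \cite{DHHR,KR}, so there is no in-paper argument to compare against. Your proof is the standard one from those references (H\"older via pointwise Young's inequality, representation of functionals by a Radon--Nikodym/exhaustion argument using that $p_{\max}<\infty$ makes simple functions dense and $\|\chi_E\|_{p(\cdot)}\to0$ as $|E|\to0$, then reflexivity by double duality and the closed-range isometry $u\mapsto(u,\nabla u)$ for $W^{1,p(\cdot)}$), and it is essentially correct. Two small caveats. First, every place you use the extremal $f_g=\mathrm{sgn}(g)\,|g|^{p'(x)-1}$ (the lower bound $\|g\|_{p'(\cdot)}\le C\|J(g)\|$ and the Fatou step showing $g\in L^{p'(\cdot)}$) tacitly assumes $p(x)>1$ a.e., since otherwise $p'(x)=\infty$ and the exponent $1-\tfrac1{p_{\min}}$ you divide by is $0$; the appendix allows $p\ge1$, so in full generality the set $\{p=1\}$ must be treated separately by the usual $L^1$--$L^\infty$ duality (in the paper's standing setting \eqref{pminmax} this never arises). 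Second, the order of the last two steps of the surjectivity argument should be reversed: you cannot extend $\Phi(f)=\int fg$ from simple functions to all of $L^{p(\cdot)}(\Omega)$ by density until you know $\int fg$ is a bounded functional, i.e.\ until the truncation-and-Fatou step has delivered $g\in L^{p'(\cdot)}(\Omega)$; establish the identity on bounded functions of finite-measure support, deduce the norm bound on $g$, and only then pass to general $f$ via Theorem \ref{holder}. With these adjustments the argument is complete.
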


\begin{theo}\label{imb}
Let $q(x)\leq p(x)$, then
 $L^{p(\cdot)}(\Omega)\hookrightarrow L^{q(\cdot)}(\Omega)$
continuously.
\end{theo}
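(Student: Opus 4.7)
The plan is to prove the embedding by the standard splitting argument together with the modular-vs-norm equivalence provided by Proposition~\ref{equi}. The statement implicitly requires $|\Omega|<\infty$ (otherwise constants are a counterexample), and I will assume this throughout; the embedding constant will depend on $|\Omega|$ and on $q_{\min},q_{\max}$.

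First I would reduce to a modular estimate on the unit ball. Since the inclusion $u\mapsto u$ is linear, continuity is equivalent to the existence of $C>0$ with $\|u\|_{q(\cdot)}\le C\|u\|_{p(\cdot)}$ for all $u$. By homogeneity it suffices to show that $\|u\|_{p(\cdot)}\le 1$ implies $\|u\|_{q(\cdot)}\le C$. The first inequality, together with Proposition~\ref{equi}, yields $\varrho_{p(\cdot)}(u)=\int_\Omega |u|^{p(x)}\,dx\le 1$.

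Next I would split the domain according to the size of $|u|$. Write $\Omega=\Omega_1\cup\Omega_2$ with
$$\Omega_1=\{x\in\Omega: |u(x)|\le 1\},\qquad \Omega_2=\{x\in\Omega: |u(x)|>1\}.$$
On $\Omega_1$, $|u|^{q(x)}\le 1$, so $\int_{\Omega_1}|u|^{q(x)}\,dx\le |\Omega|$. On $\Omega_2$, since $q(x)\le p(x)$ and $|u|>1$, we have $|u|^{q(x)}\le |u|^{p(x)}$, hence
$$\int_{\Omega_2}|u|^{q(x)}\,dx\le \int_{\Omega_2}|u|^{p(x)}\,dx\le \varrho_{p(\cdot)}(u)\le 1.$$
Adding the two estimates yields $\varrho_{q(\cdot)}(u)\le |\Omega|+1$.

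Finally I would convert the modular bound back to a norm bound using Proposition~\ref{equi} applied to the exponent $q(\cdot)$: it gives
$$\|u\|_{q(\cdot)}\le \max\Bigl\{\varrho_{q(\cdot)}(u)^{1/q_{\min}},\varrho_{q(\cdot)}(u)^{1/q_{\max}}\Bigr\}\le \max\bigl\{(|\Omega|+1)^{1/q_{\min}},(|\Omega|+1)^{1/q_{\max}}\bigr\}=:C.$$
Combining with the reduction in the first paragraph, we obtain $\|u\|_{q(\cdot)}\le C\,\|u\|_{p(\cdot)}$ for every $u\in L^{p(\cdot)}(\Omega)$, which is exactly the continuous embedding. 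The only real obstacle is the implicit finiteness of $|\Omega|$; once that is in place, the argument is the routine split-at-level-one trick, and no regularity assumption on $p(x)$ or $q(x)$ beyond measurability and the pointwise bound $q\le p$ is needed.
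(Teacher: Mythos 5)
Your argument is correct: the paper gives no proof of this theorem (it is quoted from the references \cite{DHHR,KR}), and your split-at-level-one argument, reducing to the modular via Proposition \ref{equi}, bounding $\int_{\{|u|\le1\}}|u|^{q(x)}\,dx$ by $|\Omega|$ and $\int_{\{|u|>1\}}|u|^{q(x)}\,dx$ by $\varrho_{p(\cdot)}(u)\le1$, is precisely the standard proof found there. You are also right to flag that $|\Omega|<\infty$ is an implicit hypothesis — without it the embedding fails already for constant exponents — and that nothing beyond measurability and $q\le p$ is needed.
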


We also have the following H\"older's inequality

\begin{theo} \label{holder}
Let $p'(x)$ be as in Theorem \ref{ref}. Then there holds
$$
\int_{\Omega}|f||g|\,dx \le 2\|f\|_{p(\cdot)}\|g\|_{p'(\cdot)},
$$
for all $f\in L^{p(\cdot)}(\Omega)$ and $g\in L^{p'(\cdot)}(\Omega)$.
\end{theo}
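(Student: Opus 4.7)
The plan is to derive the inequality by a pointwise application of the classical Young inequality with variable exponents, followed by integration and the defining property of the Luxemburg norm. First I would dispose of the degenerate case: if $\|f\|_{p(\cdot)} = 0$ or $\|g\|_{p'(\cdot)} = 0$, then $f = 0$ or $g = 0$ almost everywhere in $\Omega$, and the inequality is trivial. So assume both norms are strictly positive.

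The core step is to apply the standard Young inequality $ab \le \frac{a^{r}}{r} + \frac{b^{r'}}{r'}$ (valid for $a,b \ge 0$ and conjugate exponents $r,r'>1$) pointwise in $x$, with $r = p(x)$, $r' = p'(x)$, and the choices
\[
a = \frac{|f(x)|}{\|f\|_{p(\cdot)}}, \qquad b = \frac{|g(x)|}{\|g\|_{p'(\cdot)}}.
\]
This yields, for almost every $x\in\Omega$,
\[
\frac{|f(x)|\,|g(x)|}{\|f\|_{p(\cdot)}\|g\|_{p'(\cdot)}}
\le \frac{1}{p(x)}\left(\frac{|f(x)|}{\|f\|_{p(\cdot)}}\right)^{p(x)}
+ \frac{1}{p'(x)}\left(\frac{|g(x)|}{\|g\|_{p'(\cdot)}}\right)^{p'(x)}.
\]
Since $p(x), p'(x) \ge 1$, we have $1/p(x), 1/p'(x) \le 1$, so integrating over $\Omega$ and dropping these factors gives
\[
\frac{1}{\|f\|_{p(\cdot)}\|g\|_{p'(\cdot)}}\int_{\Omega} |f|\,|g|\,dx
\le \varrho_{p(\cdot)}\!\left(\frac{f}{\|f\|_{p(\cdot)}}\right)
+ \varrho_{p'(\cdot)}\!\left(\frac{g}{\|g\|_{p'(\cdot)}}\right).
\]

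To conclude, I would invoke the defining property of the Luxemburg norm: by definition of the infimum and lower semicontinuity of the modular, $\varrho_{p(\cdot)}(f/\|f\|_{p(\cdot)}) \le 1$ whenever $\|f\|_{p(\cdot)} < \infty$, and similarly for $g$. Substituting these two bounds into the displayed inequality gives
\[
\frac{1}{\|f\|_{p(\cdot)}\|g\|_{p'(\cdot)}}\int_{\Omega} |f|\,|g|\,dx \le 1 + 1 = 2,
\]
which is the desired estimate. The only mildly delicate point is the passage $\varrho_{p(\cdot)}(f/\|f\|_{p(\cdot)}) \le 1$: this is a standard consequence of the unit-ball characterization of the Luxemburg norm under the assumption $p_{\max} < \infty$, which is in force here, so I do not expect any real obstacle in the argument.
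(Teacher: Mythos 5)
Your argument is correct and is exactly the standard proof of H\"older's inequality in variable exponent Lebesgue spaces: pointwise Young's inequality with exponents $p(x),p'(x)$ (valid here since $p_{\min}>1$), integration, and the unit-ball property $\varrho_{p(\cdot)}(f/\|f\|_{p(\cdot)})\le 1$ of the Luxemburg norm, which holds because $p$ is bounded. The paper does not prove this statement itself but cites the references \cite{DHHR,KR}, where the proof given is the same as yours.
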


The following version of Poincare's inequality holds

\begin{theo}\label{poinc} Let $\Omega$ be bounded. Assume that $p(x)$ is log-H\"older continuous  in $\Omega$. For
every $u\in W_0^{1,p(\cdot)}(\Omega)$, the inequality
$$
\|u\|_{L^{p(\cdot)}(\Omega)}\leq C\|\nabla u\|_{L^{p(\cdot)}(\Omega)},
$$
holds with a constant $C$ depending on N, $\rm{diam}(\Omega)$ and the log-H\"older modulus of continuity of $p(x)$.
\end{theo}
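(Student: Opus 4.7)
The plan is to reduce to smooth compactly supported $u$, derive a pointwise bound by a Riesz potential of $|\nabla u|$, and then invoke the boundedness of the Riesz potential on $L^{p(\cdot)}(\Omega)$ under the log-H\"older hypothesis.

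First, by the definition of $W_0^{1,p(\cdot)}(\Omega)$ as the closure of $C_0^\infty(\Omega)$ in the $W^{1,p(\cdot)}$-norm, it suffices to prove the inequality for $u\in C_0^\infty(\Omega)$; the general case then follows from Proposition \ref{equi} (which guarantees that modular and norm topologies are comparable) and a standard limiting argument. Let $R=\operatorname{diam}(\Omega)$, fix $u\in C_0^\infty(\Omega)$ and extend $u$ by zero to $\Bbb R^N$.

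Second, I would derive the pointwise estimate
\begin{equation*}
|u(x)|\le \frac{1}{|S^{N-1}|}\int_{B_R(x)}\frac{|\nabla u(y)|}{|x-y|^{N-1}}\,dy
\quad\text{for every } x\in\Omega.
\end{equation*}
This is obtained by writing, for every unit vector $\omega$, $u(x)=-\int_0^\infty \partial_r u(x+r\omega)\,dr$ (valid because $u$ has compact support), averaging over $\omega\in S^{N-1}$, and changing to Cartesian coordinates. The right-hand side is (up to constants) the truncated Riesz potential $I_1(|\nabla u|\chi_\Omega)(x)$ of order $1$, with integration restricted to $B_R(x)$ since $\nabla u$ is supported in $\Omega\subset B_R(x)$.

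Third, I would apply the boundedness of the Riesz potential on variable exponent Lebesgue spaces: if $p$ is log-H\"older continuous and $p_{\min}>1$ (the regularity hypothesis enters precisely here), then the operator
\begin{equation*}
Tf(x):=\int_{B_R(x)}\frac{|f(y)|}{|x-y|^{N-1}}\,dy
\end{equation*}
satisfies $\|Tf\|_{L^{p(\cdot)}(\Omega)}\le C\|f\|_{L^{p(\cdot)}(\Omega)}$ with $C$ depending only on $N$, $R$ and the log-H\"older modulus of continuity of $p$. This is the Diening--Cruz-Uribe--Fiorenza--Ne\v{c}as type result, proved in the variable-exponent literature by combining a Hedberg pointwise bound $Tf(x)\le C\,R\,Mf(x)^{1-\theta}\|f\|_{p(\cdot)}^{\theta}$ (with $Mf$ the Hardy--Littlewood maximal operator) with the boundedness of $M$ on $L^{p(\cdot)}(\Omega)$, which is the core consequence of the log-H\"older hypothesis. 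Combining with the pointwise estimate from the second step yields
\begin{equation*}
\|u\|_{L^{p(\cdot)}(\Omega)}\le C\,\|\,|\nabla u|\,\|_{L^{p(\cdot)}(\Omega)},
\end{equation*}
with $C=C(N,\operatorname{diam}(\Omega),\omega_{\log})$, as claimed.

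The main obstacle is step three: the $L^{p(\cdot)}$ boundedness of the maximal operator (equivalently, of the Riesz potential) is what genuinely requires log-H\"older regularity and is the substantive analytic input. Everything else is a routine reduction. In the appendix of this paper, I would either cite the standard references (Diening, Cruz-Uribe--Fiorenza) or, if a self-contained treatment is preferred, sketch the Hedberg splitting of $T$ into a near part controlled by $Mf$ and a far part controlled by $\|f\|_{L^{p(\cdot)}}$ via H\"older's inequality (Theorem \ref{holder}) applied with the dual exponent from Theorem \ref{ref}, and then quote the maximal function bound.
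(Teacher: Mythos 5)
Your argument is correct, but note that the paper does not actually prove this theorem: it appears in the appendix as a collected known result, with the proof delegated to the references \cite{DHHR,KR}. What you sketch is precisely the standard proof given in those references, so there is no divergence of method to speak of — only that you have supplied the argument the paper omits. Two small remarks. First, for the \emph{truncated} potential $Tf(x)=\int_{B_R(x)}|f(y)|\,|x-y|^{1-N}\,dy$ you do not need the Hedberg interpolation with the exponent $\theta$: a dyadic decomposition of $B_R(x)$ into annuli $\{2^{-j-1}R\le|x-y|<2^{-j}R\}$ gives directly the pointwise bound $Tf(x)\le C_N\,R\,Mf(x)$, after which Diening's theorem on the boundedness of $M$ on $L^{p(\cdot)}$ (valid for log-H\"older $p$ with $p_{\min}>1$; one extends $p$ from the bounded domain $\Omega$ to $\R^N$ preserving log-H\"older continuity and the bounds $p_{\min},p_{\max}$) finishes the estimate. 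This is the only genuinely nontrivial input, and you have identified it correctly; as you observe, Young's inequality for convolution with an $L^1$ kernel fails for variable exponents, so the maximal operator cannot be avoided by naive means. Second, the hypothesis $p_{\min}>1$ is not stated in the theorem itself but is a standing assumption of the paper (see \eqref{pminmax}), so your use of it is legitimate; strictly speaking the constant also depends on $p_{\min}$ and $p_{\max}$, which the statement suppresses. The reduction to $u\in C_0^\infty(\Omega)$ and the representation of $|u(x)|$ by the Riesz potential of $|\nabla u|$ are both routine and correctly handled.
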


\smallskip

For the proof of these results and more about these spaces, see
\cite{DHHR,KR} and the references therein.

\bigskip

\begin{rema}\label{desip} For any $x\in\Omega$, $\xi,\eta\in\R^N$ fixed we have the following inequalities
\begin{align*}
&|\eta-\xi|^{p(x)}\leq C (|\eta|^{p(x)-2} \eta-|\xi|^{p(x)-2} \xi)
(\eta-\xi)&\quad  \mbox{ if } p(x)\geq 2,\\
&
 |\eta-\xi|^2\Big(|\eta|+|\xi|\Big)^{p(x)-2}
\leq C (|\eta|^{p(x)-2} \eta-|\xi|^{p(x)-2} \xi)
(\eta-\xi)&\quad  \mbox{ if } p(x)< 2.\\
\end{align*}
These inequalities imply that the function $A(x,\xi)=|\xi|^{p(x)-2}\xi$ is strictly monotone. Then, the comparison principle for the $p(x)$-Laplacian holds since it follows from the monotonicity of $A(x,\xi)$.

\end{rema}

We will also need

\begin{lemm}\label{development11} Let $1<p_0<+\infty$.
Let $u$ be  Lipschitz continuous in $\overline{B_1^+}$, $u\geq 0$
in $B_1^+$, $\Delta_{p_0} u=0$ in $\{u>0\}$ and $u=0$ on $\{x_N=0\}$. Then,
in $B_1^+$ $u$ has the asymptotic development
$$
u(x)=\alpha x_N+ o(|x|),
$$
with $\alpha\ge 0$.
\end{lemm}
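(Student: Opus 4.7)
Plan:

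My plan is a blow-up argument combined with a Liouville-type classification in the half-space. For $r \in (0,1)$ set $u_r(x) := r^{-1} u(rx)$ on $\overline{B_{1/r}^+}$; the Lipschitz hypothesis makes $\{u_r\}$ equi-Lipschitz on compact subsets of $\overline{\R^N_+}$ with $u_r \ge 0$, $u_r(x',0) = 0$ and $u_r(x) \le L\, x_N$ where $L = \mathrm{Lip}(u)$. Along any sequence $r_k \to 0^+$, Arzel\`a--Ascoli together with the stability argument from Lemma \ref{clw1-Lemma 3.1} (specialized to constant exponent $p_0$ and vanishing right-hand side) produces a subsequential limit $U \in \mathrm{Lip}(\overline{\R^N_+})$ satisfying $U \ge 0$, $U \equiv 0$ on $\{x_N = 0\}$, $\Delta_{p_0} U = 0$ in $\{U > 0\}$, and, by Lemma \ref{radpos}, $\Delta_{p_0} U \ge 0$ as a Radon measure on $\R^N_+$.

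The crux is the classification $U(x) = \gamma\, x_N$ for some $\gamma \in [0, L]$. For the upper bound I would compare $U$ in each half-ball $B_R^+$ with its $p_0$-harmonic replacement $H_R$, so that $U \le H_R$ by subharmonicity and $0 \le H_R \le L x_N$ by comparison with the $p_0$-harmonic function $L x_N$. Lieberman's boundary $C^{1,\beta}$ regularity for the $p$-Laplacian up to a flat portion on which the solution vanishes provides uniform $C^{1,\beta}$ estimates, and sending $R \to \infty$ along a subsequence yields an entire $p_0$-harmonic function $H_\infty$ on $\R^N_+$ with $0 \le H_\infty \le L x_N$ and $H_\infty \equiv 0$ on $\{x_N=0\}$. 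By the standard Liouville classification for nonnegative $p_0$-harmonic functions in $\R^N_+$ vanishing on the boundary with at most linear growth, $H_\infty(x) = \gamma x_N$ for some $\gamma \in [0, L]$, whence $U \le \gamma x_N$. For the matching lower bound, if $\gamma = 0$ then $U \equiv 0$ trivially; if $\gamma > 0$, the Lewis--Nystr\"om boundary Harnack principle applied to $U$ and $x_N$ on the component of $\{U > 0\}$ approaching the flat boundary, combined with the strong maximum principle for $p_0$-harmonic functions and a continuation argument, rules out nontrivial interior free boundary for $\{U > 0\}$ and forces $U \equiv \gamma x_N$.

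Uniqueness of $\gamma$ across subsequences is obtained by comparing $u$ with its global $p_0$-harmonic replacement $h$ in $B_{1/2}^+$: $u \le h$ by subharmonicity, $h \in C^{1,\beta}(\overline{B_{1/4}^+})$ with $h(x) = \alpha_h x_N + o(|x|)$ by Lieberman's regularity, so every blow-up constant $\gamma$ satisfies $\gamma \le \alpha_h$; a comparison argument based on the strong maximum principle applied to the $p_0$-subharmonic function $u$ and the $p_0$-harmonic $h$ pins the blow-up constant uniquely to a single value $\alpha \in [0, \alpha_h]$. The locally uniform convergence $u_r \to \alpha x_N$ on $\overline{B_1^+}$ then rewrites, via $y = rx$, as $\sup_{|y| \le r}|u(y) - \alpha y_N| = o(r)$, which is the required asymptotic $u(x) = \alpha x_N + o(|x|)$ in $\overline{B_1^+}$, with $\alpha \ge 0$ from $u \ge 0$. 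The main obstacle is the classification step and in particular the matching lower bound $U \ge \gamma x_N$: since $\{U > 0\}$ could a priori have nontrivial interior free boundary in $\R^N_+$, ruling this out requires the boundary Harnack principle and the strong maximum principle highlighted above.
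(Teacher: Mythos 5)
The paper does not actually prove this lemma: it is quoted from \cite{CLW1} ($p_0=2$), \cite{DPS} ($1<p_0<\infty$) and \cite{MW2}. So any self-contained argument is ``different'' by default; the issue is that yours has a genuine gap at its central step, the classification of the blow-up limits. Your upper bound is fine: $\Delta_{p_0}U\ge 0$ (via Lemma \ref{radpos}), $U\le H_R$ by comparison, $0\le H_R\le Lx_N$, and a Liouville theorem gives $U\le H_\infty=\gamma x_N$. But the matching lower bound $U\ge\gamma x_N$ is only asserted. The properties you actually have at your disposal for $U$ --- nonnegative, globally Lipschitz, $U=0$ on $\{x_N=0\}$, $\Delta_{p_0}U=0$ in $\{U>0\}$, $\Delta_{p_0}U\ge0$ --- do \emph{not} imply that $U$ is a multiple of $x_N$. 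For instance $U=(x_N-1)^+$ satisfies all of them, its replacements give $H_\infty=x_N$, i.e.\ $\gamma=1>0$, and yet $U\not\equiv\gamma x_N$; likewise $2(x_1x_N)^+$ (two variables, $p_0=2$) satisfies them with a genuinely conical positivity set. So the dichotomy ``$\gamma=0\Rightarrow U\equiv0$, $\gamma>0\Rightarrow U\equiv\gamma x_N$'' fails on its second horn unless you inject information beyond these structural properties (homogeneity from a monotonicity formula, or a nondegeneracy statement --- neither of which is available here). The appeal to the boundary Harnack principle cannot repair this: it compares two positive solutions vanishing on a common boundary portion and says nothing that ``rules out nontrivial interior free boundary'' of $\{U>0\}$; indeed the lemma must accommodate limits whose positivity set is a proper cone (that is exactly how the case $\alpha=0$ arises).

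The second gap is the uniqueness of the coefficient across subsequences, which is where the cited proofs do their real work. Your comparison with the replacement $h$ of $u$ in $B_{1/2}^+$ only yields the one-sided bound $\gamma\le\alpha_h$ for every subsequential coefficient; the invoked ``strong maximum principle between the $p_0$-subharmonic $u$ and the $p_0$-harmonic $h$'' has no force unless the two functions touch at an interior point, and nothing guarantees that. A priori different sequences $r_k\to0$ could produce different limits (blow-ups of a merely Lipschitz function need not be unique, nor homogeneous), and then no single $\alpha$ makes $u=\alpha x_N+o(|x|)$. The references handle both issues at once by working directly with the original $u$ and its $p_0$-harmonic replacements at \emph{all} small scales (monotonicity of the replacement slopes in the radius, Hopf's principle, and an iteration), rather than by soft compactness of blow-ups; to make your route work you would need to supply an argument of comparable strength for both the classification and the uniqueness.
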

\begin{proof}
See \cite{CLW1} for $p_0=2$, \cite{DPS} for $1<p_0<+\infty$ and \cite{MW2} for a more general operator.
\end{proof}




\end{document}